\newcounter{pcounter}
\newcommand{\ZZ}{\Bbb Z}
\newcommand{\RR}{\Bbb R}
\newcommand{\TT}{\Bbb T}
\newcommand{\NN}{\Bbb N}
\newcommand{\CC}{\Bbb C}
\newcommand{\ip}[1]{\langle #1 \rangle}
\newcommand{\varespilon}{\varepsilon}
\newcommand{\vaerpsilon}{\varepsilon}
\newcommand{\actson}{\curvearrowright}
\newtheorem{?}{Question}
\newtheorem{theorem}{Theorem}
\newtheorem{definition}[theorem]{Definition}
\newtheorem{proposition}[theorem]{Proposition}
\newtheorem{cor}[theorem]{Corollary}
\newtheorem{lemma}[theorem]{Lemma}
\DeclareMathOperator{\Int}{int}
\DeclareMathOperator{\vol}{vol}
\DeclareMathOperator{\Sym}{Sym}
\DeclareMathOperator{\Span}{Span}
\DeclareMathOperator{\Map}{Map}
\DeclareMathOperator{\im}{im}
\DeclareMathOperator{\id}{Id}
\DeclareMathOperator{\Aut}{Aut}
\DeclareMathOperator{\tr}{tr}
\DeclareMathOperator{\proj}{proj}
\DeclareMathOperator{\Hom}{Hom}
\DeclareMathOperator{\Tr}{Tr}
\DeclareMathOperator{\CO}{CO}
\DeclareMathOperator{\AP}{AP}
\DeclareMathOperator{\Prob}{Prob}
\DeclareMathOperator{\Ball}{Ball}
\numberwithin{theorem}{section}
\begin{document}
\title{Polish Models and Sofic Entropy}      
\author{Ben Hayes}
\address{Stevenson Center\\
         Nashville, TN 37240}
\email{benjamin.r.hayes@vanderbilt.edu}
\date{\today}

\begin{abstract}

We deduce properties of the Koopman representation of a positive entropy probability measure-preserving action of a countable, discrete, sofic group. Our main result may be regarded as a ``representation-theoretic" version of Sina\v\i's Factor Theorem. We  show that probability measure-preserving actions with completely positive entropy of an infinite sofic group must be mixing and, if the group is nonamenable, have spectral gap. This implies that if $\Gamma$ is a nonamenable group and $\Gamma\actson (X,\mu)$ is  a probability measure-preserving action which is not strongly ergodic, then no action orbit equivalent to $\Gamma\actson (X,\mu)$ has completely positive entropy.  Crucial to these results is a formula for entropy in the presence of a Polish, but a priori noncompact, model.

%

\end{abstract}

\keywords{ Sofic entropy, Sina\v\i's factor theorem, noncommutative harmonic analysis.}

\subjclass[2010]{ 37A35,47A67, 37A55, 46L89,}
\maketitle
\tableofcontents

\section{Introduction}
	This paper is concerned with the structural consequences of positive entropy of probability measure-preserving actions of groups.  Measure-theoretic entropy for actions of $\ZZ$ is classical and goes back to the work of Kolmogorov and Sina\v\i. Roughly speaking, it measures the randomness of the action of $\ZZ.$  It was realized by Kieffer in \cite{Kieff} that one could replace $\ZZ$ with the weaker condition of amenability of the group. Amenability requires a sequence of non-empty finite subsets of the group over which one can average in an approximately translation invariant way. Abelian groups, nilpotent groups and solvable groups are all amenable, whereas the free group on $r$ letters is not if $r\geq 1.$ Entropy for amenable groups is well established as a useful quantity in ergodic theory: it can be computed in many interesting cases (although it is not easy in general), when it is positive it reveals interesting structure on the action, and it has some useful general and intuitive properties.
	
		The most fundamental examples in entropy theory are Bernoulli shifts. If $\Gamma$ is a countable, discrete group and $(\Omega,\omega)$ is a standard probability space the \emph{Bernoulli shift} over $(\Omega,\omega)$ is the probability measure-preserving action  $\Gamma\actson (\Omega,\omega)^{\Gamma}$ defined by
\[(gx)(h)=x(g^{-1}h),\mbox{ $x\in\Omega^{\Gamma},g,h\in\Gamma$}.\]
Bernoulli shifts over amenable groups are completely classified by their entropy. If $\Gamma$ is an infinite, amenable group and $\Gamma\actson (X,\mu)$ is a free, ergodic, probability measure-preserving action  with positive entropy, then $\Gamma\actson (X,\mu)$ factors onto a Bernoulli shift. In fact, in this situation, $\Gamma\actson (X,\mu)$ factors onto any Bernoulli shift of entropy at most that of $\Gamma\actson (X,\mu).$ This is known as Sina\v\i's factor theorem and it was proved for $\Gamma=\ZZ$ by Sina\v\i\ in \cite{Sinai} and for general amenable groups by Ornstein-Weiss in \cite{OrnWeiss}. Sina\v\i's factor theorem is a fundamental result in entropy theory, as it shows that Bernoulli factors ``capture'' all the entropy of a probability measure-preserving action of an amenable group and in some sense shows that entropy is simply a measure of the amount of ``Bernoulli-like'' behavior an action has.

	  In groundbreaking work, L. Bowen defined entropy for probability measure-preserving actions of sofic groups  assuming the existence of a finite generating partition (see \cite{Bow}). This assumption was then removed by Kerr-Li who also defined entropy for actions of sofic groups on compact, metrizable spaces (see \cite{KLi}). We refer the reader to see Section \ref{S:sofic} for the precise definition. Sofic groups form a vastly larger class of groups than amenable groups, it is known that amenable groups, free groups, residually finite groups and linear groups are all sofic and soficity is closed under free products with amalgamation over amenable subgroups (see \cite{DKP},\cite{KerrDykemaPichot2},\cite{ESZ1},\cite{LPaun},\cite{PoppArg}). Thus sofic entropy is a considerable extension of entropy for actions of an amenable group defined by Kieffer.  Roughly, a group is sofic if it has ``almost actions" on finite sets which are ``almost free." A sequence of such ``almost actions'' is called a sofic approximation. Entropy for a probability measure-preserving action $\Gamma\actson (X,\mu)$ of a sofic group is then defined as the exponential growth rate of the number of finitary models there are of the action which are compatible with the fixed sofic approximation. Using sofic entropy, Bowen showed that if two Bernoulli shifts over a sofic group are isomorphic, then their base entropies are the same. Kerr-Li reproved this result with a more direct proof when the base space has infinite entropy in \cite{KLiBern}.
	
		  Since the subject is fairly young there are relatively few known structural consequences of positive \emph{measure} entropy for actions of  \emph{arbitrary} sofic groups. For example, Sina\v\i's factor theorem is not known for sofic groups. Previous consequences of entropy for actions of sofic groups have  either been for topological actions or for specific groups. In \cite{KerrLi2}, Kerr-Li prove that actions with positive \emph{topological} entropy must exhibit some chaotic behavior (for example, they must be Li-Yorke chaotic).  When the acting group is a free group one can consider another form of measure entropy, defined by Bowen in \cite{Bowenfinvariant}, called  $f$-invariant entropy. The $f$-invariant entropy is roughly a ``randomized'' version of sofic entropy.  Interesting consequences have been given by Seward in \cite{SewardFree} for the case of $f$-invariant entropy, but those only apply when the group is a free group.  After the appearance of our preprint, Meyerovitch  showed in \cite{Meyerovitch} that positive sofic entropy implies that almost every stabilizer of the action is finite.

	  In this paper, we will deduce structural consequences of positive measure entropy for actions of arbitrary sofic groups. Our applications are to \emph{spectral} properties of such actions. To the best of our knowledge, aside from the results in our paper, the results of Meyerovitch are the only ones which deduce properties of an action of a \emph{general} sofic group assuming only that the action has positive entropy. Recall that if $\Gamma\actson (X,\mu)$ is a probability measure-preserving action of a countable discrete group $\Gamma,$ we have an induced unitary representation $\rho_{\Gamma\actson (X,\mu)}\colon\Gamma\to \mathcal{U}(L^{2}(X,\mu))$ given by
\[(\rho_{\Gamma\actson (X,\mu)}(g)\xi)(x)=\xi(g^{-1}x).\]
We use $\rho^{0}_{\Gamma\actson (X,\mu)}$ for the restriction of $\rho_{\Gamma\actson (X,\mu)}$ to $L^{2}(X,\mu)\ominus \CC1.$   The representation $\rho^{0}_{\Gamma\actson (X,\mu)}$ is called the \emph{Koopman representation} of $\Gamma\actson (X,\mu).$ Properties of a probability-measure preserving action which only depend upon the Koopman representation are called \emph{spectral} properties. The Koopman representation has played a significant role in ergodic theory since the early days of the subject, being the means to deduce von Neumann's Mean Ergodic Theorem which the Ergodic Theorem relies upon as a first step. Additionally, many other fundamental properties such as compactness, weak mixing, mixing and ergodicity are all spectral properties.
	
	Our results show that one canonical representation of a group plays a special role in entropy theory. Recall that the left regular representation of a group $\lambda\colon \Gamma\to \mathcal{U}(\ell^{2}(\Gamma))$ is defined by $(\lambda(g)f)(x)=f(g^{-1}x).$ We will sometimes use $\lambda_{\Gamma}$ to specify the group. We will also use $\lambda_{\Gamma,\RR}$ (or $\lambda_{\RR}$ if the group is clear) for the orthogonal representation which is the restriction of $\lambda_{\Gamma}$ to $\ell^{2}(\Gamma,\RR).$
	
	Let $\rho_{j}\colon \Gamma\to \mathcal{U}(\mathcal{H}_{j})$ be two unitary representations of a countable discrete group $\Gamma.$ We say that $\rho_{1},\rho_{2}$ are \emph{singular} and write $\rho_{1}\perp \rho_{2}$ if no nonzero subrepresentation of $\rho_{1}$ embeds into $\rho_{2}.$ By subrepresentation, we mean a restriction of $\rho_{j},j=1,2$ to a closed, $\Gamma$-invariant, linear subspace. As is customary, we will often call a closed, $\Gamma$-invariant, linear subspace a subrepresentation as well. The terminology of singularity comes from the case $\Gamma=\ZZ.$ If $\mu$ is a Borel measure on $\TT=\RR/\ZZ,$ then we have a natural unitary representation
	\[\rho_{\mu}\colon\ZZ\to \mathcal{U}(L^{2}(\TT,\mu))\]
given by
\[(\rho_{\mu}(n)\xi)(\theta)=e^{2\pi i n\theta}\xi(\theta).\]
It is easy to check that $\rho_{\mu}\perp \rho_{\nu}$ if and only if $\mu\perp\nu.$  Similar analysis can be done for any abelian group (replacing  $\TT$ by the Pontryagin dual). Thus, singularity of representations is a natural generalization to noncommutative groups of singularity of measures.

	If $\Gamma\actson (X,\mu)$ is a probability measure-preserving action of a countable, discrete sofic group and $\Sigma$ is a sofic approximation of $\Gamma$ (see Definition \ref{S:soficdefn} for the precise definition of a sofic approximation) we use $h_{\Sigma,\mu}(X,\Gamma)$ for the entropy of $\Gamma\actson (X,\mu)$ with respect to $\Sigma$ as defined by Bowen, Kerr-Li.

\begin{theorem}\label{T:Koopmanthingsetc} Let $\Gamma$ be a countably infinite discrete sofic group with sofic approximation $\Sigma.$ Let $\Gamma\actson (X,\mathcal{M},\mu)$ be a measure-preserving action where $(X,\mathcal{M},\mu)$ is a standard probability space. Suppose that $\mathcal{H}\subseteq L^{2}(X,\mu)$ is a subrepresentation such that $\mathcal{M}$ is generated (up to sets of measure zero) by
\[\{f^{-1}(A):f\in \mathcal{H},\mbox{$A\subseteq \CC$ is Borel}\}.\]
If $\rho^{0}_{\Gamma\actson (X,\mu)}\big|_{\mathcal{H}}$  is singular with respect to the left regular representation, then
\[h_{\Sigma,\mu}(X,\Gamma)\leq 0.\]
\end{theorem}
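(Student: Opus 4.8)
The plan is to convert the generating hypothesis on $\mathcal{H}$ into a concrete (a priori noncompact) Polish model and then apply the formula for entropy in the presence of such a model. First I would fix a sequence $(f_{k})_{k\in\NN}$ that is dense in the unit ball of $\mathcal{H}$ and use $x\mapsto (f_{k}(x))_{k\in\NN}$ to embed $(X,\mathcal{M},\mu)$ into the Polish space $\CC^{\NN}$, carrying along the pushforward measure and the induced $\Gamma$-action. Since $\mathcal{H}$ is $\Gamma$-invariant and generates $\mathcal{M}$ up to null sets, this is a faithful model, so $h_{\Sigma,\mu}(X,\Gamma)$ is computed as the exponential growth rate in $d_{i}$ of the $\epsilon$-covering numbers of the spaces of microstates: maps $\phi\colon\{1,\dots,d_{i}\}\to\CC^{\NN}$ whose empirical distribution is close to the law of $(f_{k})_{k}$ and which are approximately equivariant for the sofic approximation $\sigma_{i}\colon\Gamma\to\Sym(d_{i})$.

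The key reformulation is to read off from a microstate $\phi$ a vector $\zeta_{v}\in\CC^{d_{i}}$ for each $v\in\mathcal{H}$ (its coordinate functions in the basis $(f_k)$). Writing $\rho$ for $\rho^{0}_{\Gamma\actson(X,\mu)}$, $\pi_{i}$ for the permutation representation of $\Gamma$ on $\CC^{d_{i}}$ afforded by $\sigma_{i}$, and $\langle a,b\rangle_{i}=\frac{1}{d_{i}}\sum_{j}a_{j}\overline{b_{j}}$ for the normalized inner product, the microstate conditions say that for all $v,w$ in a fixed finite subset of $\mathcal{H}$ and all $g$ in a fixed finite $F\subseteq\Gamma$,
\[
\langle \zeta_{v},\pi_{i}(g)\zeta_{w}\rangle_{i}\approx\langle v,\rho(g)w\rangle,\qquad \langle\zeta_{v},\zeta_{v}\rangle_{i}\approx\|v\|_{2}^{2},
\]
together with matching of higher moments of the empirical distribution. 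Thus each microstate is, up to small error, a partial copy of a finite piece of $\rho|_{\mathcal{H}}$ sitting inside $\pi_{i}$.

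I would then invoke the defining property $\frac{1}{d_{i}}\tr\pi_{i}(g)\to\delta_{g,e}$ of the sofic approximation, which says exactly that $\pi_{i}$ converges, in normalized trace, to the left regular representation $\lambda_{\Gamma}$; heuristically $\pi_{i}$ is a large approximate multiple of $\lambda_{\Gamma}$, so the spectral data of its self-adjoint combinations $\sum_{g\in F}c_{g}\pi_{i}(g)$ converge to those of $\lambda_{\Gamma}$. Because $\rho|_{\mathcal{H}}\perp\lambda_{\Gamma}$, the matrix coefficients $g\mapsto\langle v,\rho(g)w\rangle$ are singular relative to the (absolutely continuous, $\lambda_{\Gamma}$-type) spectral data available in $\pi_{i}$, and I would show that this forces every admissible $\zeta_{v}$ to lie within $\epsilon$ of an approximate spectral subspace of $\pi_{i}$ of dimension $o(d_{i})$. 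A subspace of dimension $o(d_{i})$ has $\epsilon$-covering number $e^{o(d_{i})}$ on its sphere, so the microstate space is covered subexponentially and the Polish-model formula yields $h_{\Sigma,\mu}(X,\Gamma)\leq 0$. The abelian case is a good sanity check: for $\Gamma=\ZZ$, a unit eigenfunction of eigenvalue $e^{2\pi i\theta}$ forces $\zeta$ to concentrate on the $O(\epsilon d_{i})$ Fourier modes of $\sigma_{i}$ near frequency $\theta$, a polynomially-coverable set, recovering the classical fact that singular spectrum gives zero entropy.

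The main obstacle is precisely the passage from the exact, global statement of singularity to a uniform finitary estimate: the matrix coefficients are matched only on a finite $F$ and to finite precision, whereas $\rho|_{\mathcal{H}}\perp\lambda_{\Gamma}$ is an assertion about all of $\Gamma$. Finitely many quadratic constraints cannot by themselves cut the unit sphere of $\CC^{d_{i}}$ down to subexponential size, so the estimate must use the full strength of singularity through a spectral or operator-algebraic mechanism rather than direct counting. I expect the decisive input to be a Lebesgue-type decomposition of $\rho|_{\mathcal{H}}$ relative to $\lambda_{\Gamma}$, combined with a trace (or Fuglede--Kadison determinant) computation bounding the $\epsilon$-covering numbers of the microstate space by the dimension of the singular part, which the hypothesis renders negligible; controlling the error terms uniformly in $i$ and taking the iterated limits of the entropy formula in the correct order is where the real work lies.
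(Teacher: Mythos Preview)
Your overall architecture matches the paper: pass to a Polish model via a dense sequence in $\mathcal{H}$, interpret microstates as vectors in $\CC^{d_i}$ that approximately realize the matrix coefficients of $\rho|_{\mathcal{H}}$ inside the permutation representation $\pi_i$, argue that these vectors are confined to a subspace of dimension $o(d_i)$, and finish by volume counting. You also correctly isolate the crux: turning the global hypothesis $\rho|_{\mathcal{H}}\perp\lambda_{\Gamma}$ into a finitary estimate on a fixed $F\subseteq\Gamma$.

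Where your proposal has a genuine gap is in the mechanism you suggest for that step. Neither a Lebesgue decomposition nor a Fuglede--Kadison determinant is what does the work (the Lebesgue decomposition is used only for the corollaries, where $\rho|_{\mathcal{H}}$ is not assumed singular). The paper's device is an operator-algebraic characterization of singularity: $\rho|_{\mathcal{H}}\perp\lambda_{\Gamma}$ is equivalent to the existence, for every $\eta>0$, of a single element $\alpha\in\CC(\Gamma)$ with $\|\rho(\alpha^*\alpha)\|,\|\lambda(\alpha^*\alpha)\|\le 1$, $\rho(\alpha^*\alpha)$ close to the identity on the chosen generators, and $\|\lambda(\alpha^*\alpha)\delta_e\|_2<\eta$. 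This comes from the von Neumann double commutant theorem and Kaplansky density applied to $\rho\oplus\lambda$. The point is that a single $\alpha$, supported on a finite subset of $\Gamma$, now encodes the singularity, so one can take $F$ to contain $\supp\alpha$ and push the inequality through the microstate conditions.

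Once you have this $\alpha$, the small subspace is produced concretely: set $p=\chi_{[1-\sqrt{\varepsilon},1+\sqrt{\varepsilon}]}\bigl(\sigma_i(\alpha)^*\sigma_i(\alpha)\bigr)$. The approximate equivariance forces each (truncated) microstate vector to be $O(\sqrt{\varepsilon})$-close to $\im p$, while the sofic trace condition $\tr(\sigma_i(\alpha)^*\sigma_i(\alpha))\to\tau(\alpha^*\alpha)=\|\lambda(\alpha^*\alpha)\delta_e\|_2^2<\eta^2$ gives $\tr(p)\lesssim\eta$. Volume packing on $p\,\Ball(\ell^2(d_i,u_{d_i}))$ then yields covering numbers of order $(C/\varepsilon)^{O(\eta d_i)}$; letting $\eta\to 0$ before $\varepsilon\to 0$ gives $h_{\Sigma,\mu}\le 0$. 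A secondary point: your embedding into $\CC^{\NN}$ does not carry an obvious $\Gamma$-action by homeomorphisms; the paper instead maps into $\CC^{\NN\times\Gamma}$ with the shift, which is why the Polish model machinery applies cleanly. The truncation $G\in C_c(\CC)$ handling unboundedness of the $f_n$ is the other ingredient you should not omit.
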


It is well-known that if $(\Omega,\omega)$ is a standard probability space, then for the Bernoulli action $\Gamma\actson (\Omega,\omega)^{\Gamma}$ we have
\begin{equation}\label{E:introkoopmanrepblah}
\rho^{0}_{\Gamma\actson (\Omega,\omega)^{\Gamma}}\cong \lambda_{\Gamma}^{\oplus \infty}.
\end{equation}
 We mentioned before that Sina\v\i's factor theorem is not known for sofic groups. Note that if $\Gamma\actson (X,\mu)$ factors onto a  Bernoulli shift, then by (\ref{E:introkoopmanrepblah}) we have that $\lambda_{\Gamma}^{\oplus \infty}$ embeds into $\rho^{0}_{\Gamma\actson (X,\mu)}.$  In this manner,  Theorem \ref{T:Koopmanthingsetc} may be regarded as a weak version of Sina\v\i's Factor Theorem for sofic groups. It shows that at the representation-theoretic level, an action of a sofic group with positive entropy must contain a subrepresentation of the Koopman representation of a Bernoulli shift. We can thus think of this theorem as a ``representation-theoretic version'' of Sina\v\i's Factor Theorem. It is also the first result which indicates that positive entropy actions of sofic groups must behave in a manner ``similar'' to Bernoulli shifts.

	We can say even more than Theorem \ref{T:Koopmanthingsetc} if we assume a stronger version of positive entropy. Recall that an action $\Gamma\actson (X,\mu)$ has \emph{completely positive entropy} with respect to a sofic approximation $\Sigma$ if whenever $\Gamma\actson (Y,\nu)$ is a factor of $\Gamma\actson (X,\mu)$ and $(Y,\nu)$ is not a one-atom space then $h_{\Sigma,\nu}(Y,\Gamma)>0.$ The following is easy from Theorem \ref{T:Koopmanthingsetc}.
\begin{cor}\label{C:WeakSInaiINtro} Let $\Gamma$ be a countable discrete sofic group with sofic approximation $\Sigma.$ Suppose that $\Gamma\actson (X,\mu)$ has completely positive entropy with respect to $\Sigma$. Then the Koopman representation of $\Gamma\actson (X,\mu)$ is embeddable into the infinite direct sum of the left regular representation.
\end{cor}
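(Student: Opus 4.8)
The plan is to peel off from the Koopman representation the part that is singular with respect to $\lambda_\Gamma$, realize it as (living inside the $L^2$ of) a factor, and then apply Theorem \ref{T:Koopmanthingsetc} to that factor. One may assume $\Gamma$ is infinite: if $\Gamma$ is finite then every irreducible representation already occurs in $\lambda_\Gamma$, so $\lambda_\Gamma^{\oplus\infty}$ contains every representation and the conclusion is automatic (and in any case Theorem \ref{T:Koopmanthingsetc} is stated for infinite $\Gamma$).

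First I would record the Lebesgue-type decomposition of the Koopman representation relative to the regular representation. Write $\rho=\rho^{0}_{\Gamma\actson(X,\mu)}$ acting on $\HS_{0}=L^{2}(X,\mu)\ominus\CC1$, let $\HS_{ac}$ be the closed linear span of all $\Gamma$-invariant subspaces of $\HS_{0}$ that embed into $\lambda_{\Gamma}^{\oplus\infty}$, and set $\HS_{s}=\HS_{0}\ominus\HS_{ac}$. Since $L^{2}(X,\mu)$ is separable and $\lambda_{\Gamma}^{\oplus\infty}\oplus\lambda_{\Gamma}^{\oplus\infty}\cong\lambda_{\Gamma}^{\oplus\infty}$, a maximality (Zorn) argument over mutually orthogonal invariant subspaces embedding into $\lambda_{\Gamma}^{\oplus\infty}$ shows that $\HS_{ac}$ itself embeds into $\lambda_{\Gamma}^{\oplus\infty}$ and is the maximal invariant subspace with this property. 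Consequently $\HS_{s}$ is singular with respect to $\lambda_{\Gamma}$: any nonzero invariant subspace of $\HS_{s}$ embedding into $\lambda_{\Gamma}$ would embed into $\lambda_{\Gamma}^{\oplus\infty}$ and hence be contained in $\HS_{ac}$, forcing it to be zero.

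Next I would produce a factor carrying the singular part. Let $\mathcal{N}\subseteq\mathcal{M}$ be the sub-$\sigma$-algebra generated (mod null sets) by $\{f^{-1}(A):f\in\HS_{s},\ A\subseteq\CC\text{ Borel}\}$. As $\HS_{s}$ is $\Gamma$-invariant, so is $\mathcal{N}$, and it defines a factor $\Gamma\actson(Y,\nu)$ of $\Gamma\actson(X,\mu)$ in which $L^{2}(Y,\nu)$ is identified with the space of $\mathcal{N}$-measurable functions in $L^{2}(X,\mu)$. The key observation is that the Koopman operators for $(Y,\nu)$ are exactly the restrictions of those for $(X,\mu)$ to this invariant subspace, so $\rho^{0}_{\Gamma\actson(Y,\nu)}\big|_{\HS_{s}}=\rho\big|_{\HS_{s}}$, which is singular with respect to $\lambda_{\Gamma}$ by the previous paragraph; moreover $\HS_{s}\subseteq L^{2}(Y,\nu)\ominus\CC1$ and, by construction, $\HS_{s}$ generates $\mathcal{N}$. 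Thus the hypotheses of Theorem \ref{T:Koopmanthingsetc} hold for $\Gamma\actson(Y,\nu)$ with the subrepresentation $\HS_{s}$, and the theorem gives $h_{\Sigma,\nu}(Y,\Gamma)\leq0$.

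Finally I would invoke complete positivity of entropy. If $\HS_{s}\neq0$, then $L^{2}(Y,\nu)\ominus\CC1\neq0$, so $(Y,\nu)$ is not a one-atom space, whence completely positive entropy forces $h_{\Sigma,\nu}(Y,\Gamma)>0$, contradicting the previous step. Therefore $\HS_{s}=0$, that is $\rho^{0}_{\Gamma\actson(X,\mu)}=\HS_{ac}$ embeds into $\lambda_{\Gamma}^{\oplus\infty}$, which is the assertion. I expect the only genuinely technical point to be the very first step, namely the existence of the maximal $\lambda_{\Gamma}^{\oplus\infty}$-absolutely-continuous subspace together with the singularity of its orthogonal complement; the remainder is bookkeeping about how invariant subspaces, the $\sigma$-algebra they generate, and restrictions of Koopman operators fit together. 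This Lebesgue-type decomposition is standard representation theory (singularity being preserved under the operations involved) and is presumably recorded earlier in the paper, so it may be cited rather than reproved.
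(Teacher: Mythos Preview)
Your argument is correct and follows essentially the same route as the paper's proof (Corollary~\ref{C:cpe}): both invoke the Lebesgue decomposition of the Koopman representation relative to $\lambda_\Gamma$ (recorded in the paper as Proposition~\ref{P:LebesgueDecomposition}), pass to a factor generated by functions in the singular part, apply Theorem~\ref{T:Koopmanthingsetc} (equivalently Theorem~\ref{T:SingularLebesgue}) to force that factor to have nonpositive entropy, and then use completely positive entropy to conclude the singular part is trivial. The only cosmetic difference is that the paper argues one function at a time---for each $f$ in the singular part it shows the cyclic factor generated by $f$ is a one-atom space, hence $f$ is constant---whereas you take the whole singular subspace $\mathcal{H}_s$ at once and show the factor it generates is trivial; both variants work equally well.
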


Corollary \ref{C:WeakSInaiINtro} was proved for $\Gamma$ amenable by Dooley and Golodets in \cite{Dooley}. Here we should mention that Dooley-Golodets actually prove that if $\Gamma\actson (X,\mu)$ has completely positive entropy and $\Gamma$ is amenable, then the Koopman representation of $\Gamma\actson (X,\mu)$ is isomorphic to an infinite direct sum of the left regular representation. This is an easy consequence of Corollary \ref{C:WeakSInaiINtro} and Sina\v\i's factor theorem. Since Sina\v\i's factor theorem is not known for sofic groups, we do not know if $\rho^{0}_{\Gamma\actson (X,\mu)}\cong \lambda^{\oplus \infty}$ (instead of just $\rho^{0}_{\Gamma\actson (X,\mu)}$ embeds into $\lambda^{\oplus \infty}$) when $\Gamma$ is sofic.

From Corollary \ref{C:WeakSInaiINtro}, we automatically deduce other important structural properties of completely positive entropy actions of a sofic group, specifically mixing and spectral gap.   A probability measure-preserving action $\Gamma\actson (X,\mu)$ is said to:

\begin{enumerate}[(i):]
\item be mixing if $\lim_{g\to\infty}\mu(gA\cap B)=\mu(A)\mu(B)$\mbox{ for all measurable $A,B\subseteq X$,}\\
\item be strongly ergodic if for every sequence $A_{n}$ of measurable subsets of $X$ with $\mu(gA_{n}\Delta A_{n})\to 0$ for all $g\in \Gamma$ we have that $\mu(A_{n})(1-\mu(A_{n}))\to 0,$\\
\item have spectral gap if for every sequence $\xi_{n}\in L^{2}(X,\mu)$ with $\|g\xi_{n}-\xi_{n}\|_{2}\to 0$ for all $g\in\Gamma$, it is true that $\|\xi_{n}\|_{2}\to 0.$
\end{enumerate}
It is easy to see that spectral gap implies strong ergodicity.

%

\begin{cor}Let $\Gamma$ be a countable discrete sofic group with sofic approximation $\Sigma.$ Suppose that $\Gamma\actson (X,\mu)$ has completely positive entropy with respect to $\Sigma$.

(i): If $\Gamma$ is infinite, then $\Gamma\actson (X,\mu)$ is mixing.

(ii): If $\Lambda$ is any nonamenable subgroup of $\Gamma,$ then $\Lambda\actson (X,\mu)$ is strongly ergodic (in fact it has spectral gap).
\end{cor}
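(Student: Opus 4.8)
The plan is to derive both statements directly from Corollary~\ref{C:WeakSInaiINtro}, which furnishes an embedding $\rho^{0}_{\Gamma\actson(X,\mu)}\hookrightarrow\lambda_{\Gamma}^{\oplus\infty}$, together with two classical facts about the regular representation: that $\lambda_{\Gamma}$ is mixing (has $c_{0}$ matrix coefficients) when $\Gamma$ is infinite, and that $\lambda_{\Lambda}$ has no almost invariant vectors when $\Lambda$ is nonamenable. The substance of the proof is the bookkeeping needed to check that these two properties survive the operations — infinite direct sum, passage to a subrepresentation, and restriction to a subgroup — relating $\rho^{0}$ to $\lambda$.

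For (i), I would first recall the dictionary that $\Gamma\actson(X,\mu)$ is mixing precisely when its Koopman representation $\rho^{0}$ is mixing, i.e. every matrix coefficient $g\mapsto\ip{\rho^{0}(g)\xi,\eta}$ lies in $c_{0}(\Gamma)$; the reduction uses $\mathbf{1}_{gA}=\rho(g)\mathbf{1}_{A}$ together with the splitting $\mathbf{1}_{A}=\mu(A)\mathbf{1}+f_{A}$ with $f_{A}\in L^{2}(X,\mu)\ominus\CC1$. Next I would verify that the class of mixing representations is closed under subrepresentations (immediate) and under arbitrary direct sums: given $\xi=(\xi_{i}),\eta=(\eta_{i})$ in $\bigoplus_{i}\HS_{i}$ and $\varepsilon>0$, isolate a finite index set $F$ carrying all but $\varepsilon$ of the norm of each vector, estimate the tail $\sum_{i\notin F}\ip{\pi(g)\xi_{i},\eta_{i}}$ by Cauchy--Schwarz, and observe that the remaining finite sum of $c_{0}$ functions is again in $c_{0}$. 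Since $\lambda_{\Gamma}$ is mixing for infinite $\Gamma$, its power $\lambda_{\Gamma}^{\oplus\infty}$ is mixing, and therefore so is the subrepresentation $\rho^{0}_{\Gamma\actson(X,\mu)}$.

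For (ii), I would use the standard characterization that nonamenability of $\Lambda$ is equivalent to $\lambda_{\Lambda}$ having a spectral gap, i.e. admitting a Kazhdan pair $(F,\kappa)$ with $\max_{g\in F}\|\pi(g)\xi-\xi\|\geq\kappa\|\xi\|$ for all $\xi$. Decomposing $\ell^{2}(\Gamma)$ along the right cosets $\Lambda\backslash\Gamma$ gives $\lambda_{\Gamma}\big|_{\Lambda}\cong\lambda_{\Lambda}^{\oplus[\Gamma:\Lambda]}$, so $\lambda_{\Gamma}^{\oplus\infty}\big|_{\Lambda}$ is a multiple of $\lambda_{\Lambda}$. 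The point is then that such a spectral gap passes to an arbitrary direct sum (with constant $\kappa/\sqrt{|F|}$, obtained by averaging $\max_{g\in F}\|\cdot\|^{2}$ over $F$ and summing over the summands) and to any subrepresentation (with the same constant). Restricting the embedding of Corollary~\ref{C:WeakSInaiINtro} to $\Lambda$, the Koopman representation $\rho^{0}_{\Lambda\actson(X,\mu)}=\rho^{0}_{\Gamma\actson(X,\mu)}\big|_{\Lambda}$ embeds into a multiple of $\lambda_{\Lambda}$ and hence inherits a spectral gap; strong ergodicity then follows from the implication already recorded in the text.

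The genuine mathematical content is entirely carried by Corollary~\ref{C:WeakSInaiINtro}, hence by Theorem~\ref{T:Koopmanthingsetc}; once that is granted, there is no real obstacle here, only two uniformity checks to perform with care — the Cauchy--Schwarz tail estimate ensuring mixing is not destroyed by an infinite direct sum, and the averaging estimate ensuring a positive Kazhdan constant survives passage to $\lambda_{\Lambda}^{\oplus\infty}$ and to subrepresentations. Everything else is the elementary correspondence between spectral properties of the action and of $\rho^{0}$, plus the classical descriptions of infiniteness and nonamenability through the regular representation.
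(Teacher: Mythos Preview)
Your proposal is correct and follows exactly the route the paper indicates: the paper does not give a detailed proof of this corollary but states explicitly that it is an ``automatic'' consequence of Corollary~\ref{C:WeakSInaiINtro} (equivalently Corollary~\ref{C:cpe}), and your argument fills in precisely the standard representation-theoretic verifications the paper leaves to the reader. Your bookkeeping---$c_{0}$ matrix coefficients passing to direct sums via a Cauchy--Schwarz tail estimate, the coset decomposition $\lambda_{\Gamma}\big|_{\Lambda}\cong\lambda_{\Lambda}^{\oplus[\Gamma:\Lambda]}$, and the Kazhdan-pair constant $\kappa/\sqrt{|F|}$ surviving direct sums---is all correct and is exactly what the phrase ``automatically deduce'' is pointing at.
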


	There is another approach to nonamenable entropy, called Rokhlin entropy, due to Seward  in \cite{SewardKrieger}. It has the advantage of being  easy to define and being defined for all groups, but the disadvantage of being extremely difficult to compute. Rokhlin entropy is an upper bound for sofic entropy, but there are no known cases where one can prove an action has positive Rokhlin entropy without using that is has positive sofic entropy. We mention that completely positive Rokhlin entropy can be defined in a similar manner. After the appearance of our preprint, Alpeev in \cite{AALP} proved that actions with completely positive Rohklin entropy are weakly mixing. His approach is completely elementary. It is easy to see that completely positive sofic entropy implies completely positive Rokhlin entropy. Whether or not  actions of a nonamenable group with completely positive Rohklin entropy are mixing, strongly ergodic, or have spectral gap all appear to be open. It seems to be very difficult to deduce properties of the Koopman representation of an action from the assumption that the action has positive Rokhlin entropy.

	Part $(ii)$ of the above Corollary is rather special to nonamenable groups. Recall that two probability measure-preserving actions $\Gamma\actson (X,\mu),\Lambda\actson (Y,\nu)$ of countable discrete groups $\Gamma,\Lambda$ are said to be \emph{orbit equivalent} if there is a measure space isomorphism $\theta\colon (X,\mu)\to (Y,\nu)$ so that $\theta$ takes the $\Gamma$-orbits to $\Lambda$-orbits, i.e. $\theta(\Gamma x)=\Lambda\theta(x)$ for almost every $x\in X.$ Orbit equivalence theory is an area of much current interest relating to operator algebras, ergodic theory and group theory.  Strong ergodicity is an invariant of the orbit equivalence class of the action. Our corollary shows that if $\Gamma\actson (X,\mu)$ is a probability measure-preserving action of a nonamenable group and if the action is not strongly ergodic, then no action orbit equivalent to $\Gamma\actson (X,\mu)$ has completely positive entropy.   Our result indicates that entropy for actions of nonamenable groups may have nontrivial consequences for orbit equivalence theory. It is a celebrated, and deep, fact that all ergodic actions of an amenable group are orbit equivalent. This is due to Connes-Feldman-Weiss and Ornstein-Weiss (see \cite{CFW},\cite{OWOE}). Thus entropy for actions of amenable groups cannot have \emph{any} consequences for orbit equivalence theory.
	
	Spectral gap and strong ergodicity are important properties with many applications. Spectral gap has connections to expander graphs (see \cite{BG1},\cite{BG2},\cite{BourgainY}, \cite{LPS}), orbit equivalence rigidity (see \cite{IoanaSG},\cite{PopaSG}) and  number theory (see \cite{Selberg}). Spectral gap is also related to the Banach-Ruziewicz problem which asks if Lebesgue measure is the unique, finitely-additive, rotation-invariant, probability measure on the sphere defined on all Lebesgue measurable sets (solved independently by Margulis \cite{MargulisRuz} and Sullivan \cite{DennisRuz}). It  is well known that \emph{no} action of an amenable group is strongly ergodic. The consequences  entropy has for strong ergodicity, spectral gap and orbit equivalence indicate that entropy for nonamenable groups may be used to deduce phenomena  not present in the realm of ergodic theory of amenable groups. This reveals  the importance of generalizing entropy to actions of nonamenable groups.

%

    We briefly outline the key differences in our approach and the approach of Dooley-Golodets to prove Corollary \ref{C:WeakSInaiINtro}. Dooley-Golodets first prove Theorem \ref{T:Koopmanthingsetc} when $\Gamma=\ZZ.$ Fourier analysis reduces Theorem \ref{T:Koopmanthingsetc} for $\Gamma=\ZZ$  to the fact that if $\mu,\nu$ are mutually singular, Borel, probability measures on the circle, then there is a continuous function $f$ on the circle with $0\leq f\leq 1$ so that $f$ is ``close'' to the constant function $1$ in $L^{2}(\mu),$ but is ``close'' to zero in $L^{2}(\nu).$ This fact is a simple exercise in measure theory. Dooley-Golodets then deduce Corollary \ref{C:WeakSInaiINtro} for $\Gamma$ amenable from the case $\Gamma=\ZZ$ by using that all amenable groups are orbit equivalent to the integers.

	 Our approach is to prove Theorem  \ref{T:Koopmanthingsetc} for a general sofic group, by simply replacing the harmonic analysis for $\Gamma=\ZZ$ with noncommutative harmonic analysis for a general group. The assumption about singularity of measures is then replaced by singularity of representations of groups. The representation theory of a group is captured by its universal $C^{*}$-algebra and so  it is natural to  replace the algebra of continuous functions on $\TT$ with the $C^{*}$-algebra of the group. One can then characterize singularity of representations of an arbitrary group in a manner similar to the preceding paragraph. In short, we abstract the harmonic analysis for the case $\Gamma=\ZZ$ to noncommutative harmonic analysis for a nonabelian group. This approach uses essentially no structure of the group and removes the orbit equivalence techniques in the approach of \cite{Dooley}, which are only valid in the case of amenable groups.

	Although this approach using noncommutative harmonic analysis may make our methods seem abstract and esoteric, the proof of Theorem \ref{T:Koopmanthingsetc} is essentially self-contained. After the input of the aforementioned noncommutative harmonic analysis techniques, as well as basic facts about Borel measures on Polish spaces, the rest of the proof of Theorem \ref{T:Koopmanthingsetc} is elementary. It only relies on basic consequences of the finite-dimensional spectral theorem and simple volume counting estimates. Additionally, the noncommutative harmonic analysis techniques used lie at the basics of $C^{*}$-algebra theory.  On the other hand, the fact that every action is amenable groups is orbit equivalent to the integers is fairly deep. So in the amenable case we have discovered a more direct and elementary proof of Corollary \ref{C:WeakSInaiINtro}.

 	 Intuitively, entropy should be some measure of randomness of the system. Theorem \ref{T:Koopmanthingsetc} shows indeed that positive entropy actions must exhibit some randomness properties. For example, we can view the left regular representation as the representation which exhibits the perfect amount of mixing. We can also use Theorem \ref{T:Koopmanthingsetc} to  show that highly structured actions, like compact actions, must have nonpositive sofic entropy.
\begin{cor}\label{C:Kstuff} Let $\Gamma$ be a countable discrete sofic group with sofic approximation $\Sigma.$ Suppose that $\alpha\colon \Gamma\to \Aut(X,\mu)$ has image contained in a compact (for the weak topology) subgroup of $\Aut(X,\mu).$ Then for the action given by $\alpha,$
\[h_{\Sigma,\mu}(X,\Gamma)\leq 0.\]
\end{cor}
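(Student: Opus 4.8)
The plan is to obtain this as a direct consequence of Theorem \ref{T:Koopmanthingsetc}, applied with $\mathcal{H}=L^{2}(X,\mu)\ominus\CC1$. For this choice the generating hypothesis is automatic, since $1_{A}-\mu(A)1\in\mathcal{H}$ for every measurable $A$ and these functions generate $\mathcal{M}$ up to null sets; moreover $\mathcal{H}$ is the whole domain of the Koopman representation, so $\rho^{0}_{\Gamma\actson(X,\mu)}\big|_{\mathcal{H}}=\rho^{0}_{\Gamma\actson(X,\mu)}$. Thus the entire task reduces to verifying the spectral hypothesis of the theorem, namely that $\rho^{0}_{\Gamma\actson(X,\mu)}$ is singular with respect to $\lambda_{\Gamma}$. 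Since Theorem \ref{T:Koopmanthingsetc} concerns infinite $\Gamma$, I take $\Gamma$ infinite; this is exactly the case in which the regular representation argument below has content.

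To establish singularity I would first exploit compactness to decompose the Koopman representation into finite-dimensional pieces. Let $K$ be the closure of the image $\alpha(\Gamma)$ inside $\Aut(X,\mu)$ for the weak topology, so that $K$ is compact by hypothesis. The weak topology on $\Aut(X,\mu)$ corresponds, under the Koopman construction, to the strong operator topology on $\mathcal{U}(L^{2}(X,\mu))$; hence the Koopman representation of $\Gamma$ extends to a strongly continuous unitary representation of the compact group $K$. By the Peter--Weyl theorem this representation is a Hilbert-space direct sum $L^{2}(X,\mu)=\bigoplus_{i}V_{i}$ of finite-dimensional $K$-invariant subspaces, each of which is in particular a finite-dimensional $\Gamma$-subrepresentation. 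Consequently $\rho^{0}_{\Gamma\actson(X,\mu)}$, carried by $L^{2}(X,\mu)\ominus\CC1$, is itself a direct sum of finite-dimensional representations of $\Gamma$. The complementary fact I would use is that for infinite $\Gamma$ the regular representation $\lambda_{\Gamma}$ has no nonzero finite-dimensional subrepresentation: if $V\subseteq\ell^{2}(\Gamma)$ were finite-dimensional, $\Gamma$-invariant and nonzero, with orthonormal basis $\xi_{1},\dots,\xi_{n}$, then $\sum_{j}|\langle\lambda(g)\xi_{1},\xi_{j}\rangle|^{2}=\|\lambda(g)\xi_{1}\|^{2}=1$ for all $g$, whereas each matrix coefficient $g\mapsto\langle\lambda(g)\xi_{1},\xi_{j}\rangle$ lies in $c_{0}(\Gamma)$ and hence tends to $0$ as $g\to\infty$, a contradiction.

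Combining these, I would argue singularity as follows. Suppose a nonzero subrepresentation $W\subseteq L^{2}(X,\mu)\ominus\CC1$ embedded $\Gamma$-equivariantly and isometrically into $\ell^{2}(\Gamma)$. Letting $P_{i}$ denote the orthogonal projection onto $V_{i}$, which commutes with the $\Gamma$-action, some $P_{i}|_{W}$ is nonzero, and then $W\ominus\ker(P_{i}|_{W})$ is a nonzero finite-dimensional $\Gamma$-invariant subspace of $W$; its image in $\ell^{2}(\Gamma)$ would be a nonzero finite-dimensional subrepresentation of $\lambda_{\Gamma}$, contradicting the previous paragraph. Hence $\rho^{0}_{\Gamma\actson(X,\mu)}\perp\lambda_{\Gamma}$, and Theorem \ref{T:Koopmanthingsetc} gives $h_{\Sigma,\mu}(X,\Gamma)\leq0$. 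The only genuinely substantive points are the two representation-theoretic inputs---Peter--Weyl (together with the identification of the weak and strong operator topologies) to produce the finite-dimensional decomposition, and the absence of finite-dimensional subrepresentations in $\lambda_{\Gamma}$; I expect the verification that every nonzero subrepresentation of a finite-dimensionally decomposable representation again contains a finite-dimensional invariant subspace to be the one step deserving a careful word, everything else being a formal reduction to Theorem \ref{T:Koopmanthingsetc}.
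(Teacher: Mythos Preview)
Your proof is correct and follows the same overall strategy as the paper: establish $\rho^{0}_{\Gamma\actson(X,\mu)}\perp\lambda_{\Gamma}$ and invoke Theorem~\ref{T:Koopmanthingsetc}. The paper's argument for singularity is phrased slightly differently and is a bit shorter. It calls a unitary representation $\rho$ \emph{compact} if $\overline{\rho(\Gamma)}^{WOT}\subseteq\mathcal{U}(\mathcal{H})$ and \emph{weakly mixing} if $0\in\overline{\rho(\Gamma)}^{WOT}$, observes that a compact representation can have no nonzero weakly mixing subrepresentation (these conditions are visibly incompatible), and recalls that $\lambda_{\Gamma}$ is weakly mixing when $\Gamma$ is infinite. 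Singularity follows at once.

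Your route via Peter--Weyl is really an unpacking of the same dichotomy: Peter--Weyl is exactly what makes ``compact image $\Rightarrow$ direct sum of finite-dimensional pieces'' work, and your $c_{0}$-matrix-coefficient computation is one concrete way to see that $\lambda_{\Gamma}$ is weakly mixing (indeed, mixing). The paper's phrasing is more economical and avoids naming Peter--Weyl explicitly; your version has the virtue of making the structural obstruction (finite-dimensional invariant subspaces cannot sit inside $\ell^{2}(\Gamma)$) completely explicit, and your handling of the step ``a nonzero subrepresentation of a direct sum of finite-dimensional representations contains a nonzero finite-dimensional subrepresentation'' via $W\ominus\ker(P_{i}|_{W})$ is clean. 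Either argument is perfectly acceptable.
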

In a previous version of this article, we used our techniques to prove that distal measure-theoretic action have entropy zero. After the appearance of this version, Alpeev proved in \cite{AALP} that measure distal actions of an arbitrary group have Rohklin entropy zero. Then, Burton proved in \cite{PBur} that distal actions have naive entropy zero if the group contains a copy of $\ZZ$. Naive entropy zero implies Rohklin entropy zero which in turn implies sofic entropy zero. These proofs are both elementary, whiles ours is arguably not. We have elected to remove this section from the current version of this article, as we are actually going to prove more general results in \cite{Me9} which will give structural results of a probability measure-preserving action of a sofic group relative to its Pinsker factor. The results in \cite{AALP} imply that Rohklin entropy decreases for compact extensions (and our results in \cite{Me9} prove the same result for sofic entropy) and it appears that it is unknown whether or not naive entropy decreases under compact extensions.

	An important new tool we use to prove all of these theorems is Polish models.  If $\Gamma\actson (X,\mu)$ is a probability measure-preserving action of a countable discrete group, a \emph{topological model} for the action is an action $\Gamma \actson (Y,\nu)$  isomorphic to $\Gamma\actson (X,\mu)$ where $Y$ is a separable, metrizable topological space, $\nu$ is a $\Gamma$-invariant Borel probability measure on $Y$ and the action is by homeomorphisms. Roughly one can think of this as giving a topology on $X$ so that the action is by homeomorphisms. For actions on  standard probability spaces compact models always exist. Moreover, Kerr-Li in \cite{KLi} show that one can compute entropy in the presence of a compact model in a manner which uses the topology. Many of the computations for sofic entropy have used the compact model formalism (see \cite{BowenLi},\cite{KLi},\cite{Me5}). To prove the above theorems, we give a definition of sofic entropy in the presence of a Polish model (that is where $Y$ is merely assumed to be a Polish space, i.e. a completely metrizable separable topological space). We remark here that R. Bowen defined (see \cite{RBow}) topological entropy for uniformly continuous automorphisms of a metric space, proving that it was invariant under uniformly continuous conjugacies. Our approach is slightly different here, we do not require the homeomorphisms to be uniformly continuous and R. Bowen did not consider measure-preserving actions.

	Since compact models always exist, we should mention why we decided to consider the case of a Polish model. For this, let us mention a natural way of obtaining Polish models. Given a probability measure-preserving action $\Gamma\actson (X,\mathcal{M},\mu),$ we say that a family $\mathcal{F}$ of measurable functions $X\to\CC$ is $\emph{generating}$ if $\mathcal{M}$ is the smallest complete, $\Gamma$-invariant sigma-algebra of sets in $X$ which makes all the elements of $\mathcal{F}$ measurable. Associated to a family of generators, one can canonically produce a topological model in the following manner. Define
	\[\Phi\colon X\to \CC^{\mathcal{F}\times \Gamma}\]
by
\[\Phi(x)(f,g)=f(g^{-1}x)\]
and let $\Gamma\actson \CC^{\mathcal{F}\times \Gamma}$ be the shift action given by
\[(gx)(f,h)=x(f,g^{-1}h).\]
Setting $\nu=\Phi_{*}\mu,$ we have that $\Gamma\actson (\CC^{\mathcal{F}\times\Gamma},\nu)$ is a topological model of our action. We can thus think of topological models as a ``presentation-theoretic'' approach to ergodic theory analogous to presentation theory of groups. If $\mathcal{F}\subseteq L^{\infty}(X,\mu),$ then the topological model one produces above is a compact model. In general the topological model is Polish. Thus Polish models are a canonical way of dealing with unbounded generators. This is relevant because all of our assumptions are about functions in $L^{2}(X,\mu)$ and not $L^{\infty}(X,\mu)$. There are of course ways of turning a family of generators which are unbounded into family of generators which are bounded. One can employ cut-off functions, or compose with injective continuous maps $\RR\to [-1,1].$ We warn the reader that \emph{all} of these attempts to reduce to a family of bounded generators destroy our representation-theoretic hypotheses and so we will really need to deal with unbounded generators and, as a consequence, Polish models.

	We remark here that if we take the above topological model associated to family of generators in $L^{\infty}(X,\mu)$ then we essentially recover the operator algebraic approach to sofic entropy by Kerr-Li in \cite{KLi}. Thus the Polish model approach to sofic entropy may be regarded as a generalization of the operator algebra approach given by Kerr-Li in \cite{KLi} to a family of unbounded generators. The crucial aspect of Polish spaces which allows us to equate our definition of entropy for a Polish model to that of Bowen and Kerr-Li is tightness of a single probability measure on a Polish space. Tightness roughly asserts that, up to a small error, the probability measure the probability measure is supported on a compact set. There are examples of separable metrizable spaces where not every Borel probability measure is tight (e.g. see the remarks after Theorem 23 of \cite{Varad}) and this is the reason we need our spaces to be Polish. The assumption of a topological model being Polish is also natural since the canonical topological model associated to a family of generators is always Polish.


\textbf{Acknowledgments} Much of this work was done while I was still a PhD student at UCLA. I am very grateful to the kind hospitality and stimulating environment at UCLA. I would  like to thank Lewis Bowen for suggesting the problem of computing sofic entropy of Gaussian actions in the program ``Von Neumann algebras and ergodic theory of group actions" at Insitut Henri Poincar\'{e} in 2011. The solution to this problem ultimately led to this work. I would like to thank Stephanie Lewkiewicz for many interesting discussion about Polish models.
\section{Preliminaries}

\subsection{Notational Remarks}

	We will use $\rho$ for a representation. We will thus have to forego the usual practice in sofic entropy of using $\rho$ for a metric, and will instead use $\Delta.$

If $A,B$ are sets we use $B^{A}$ for all functions $f\colon A\to B.$ If $A=\{1,\dots,n\}$ we will use $B^{n}$ instead of  $B^{\{1,\dots,n\}}.$  If $C$ is another set and $\phi\colon B\to C$ is a function, we use $\phi^{n}\colon B^{n}\to C^{n}$ for the map
\[\phi^{n}(f)=f\circ \phi.\]
If $X$ is a Polish space, and $f\in C_{b}(X)$ we use $\|f\|$ for the uniform norm of $f.$ We will use $\|f\|_{C_{b}(X)}$ if the space $X$ is not clear from the context.

Let $(A,\Delta)$ be a pseudometric space. For subsets $C,B$ of $A,$ and $\varepsilon>0$ we say that $C$ is $\varepsilon$-contained in $B$ and write $C\subseteq_{\varepsilon}B$ if for all $c\in C,$ there is a $b\in B$ so that $\Delta(c,b)<\varepsilon.$ We say that $S\subseteq A$ is $\varepsilon$-dense if $A\subseteq_{\varepsilon}S.$ We use $S_{\varepsilon}(A,\Delta)$ for the smallest cardinality of a $\varepsilon$-dense subset of $A.$ If $C\subseteq_{\delta}B$ are subsets of $A,$ then
\[S_{2(\varepsilon+\delta)}(C,\Delta)\leq S_{\varepsilon}(B,\Delta).\]
If $A$ is a finite set we will use $u_{A}$ for the uniform probability measure on $A.$ We will typically write $u_{n}$ instead of $u_{\{1,\dots,n\}}.$ For $x\in \CC^{n},$ $\|x\|_{2}$ will denote the $\ell^{2}$ norm with respect to $u_{n}$ unless otherwise stated. There are certain times when we will have to use $\|\cdot\|_{\ell^{2}(n)},\|\cdot\|_{\ell^{2}(n,u_{n})}$ at the same time. In such an instance we will use notation which specifies which $\ell^{2}$-norm we are using. Additionally, we will use
\[\ip{\cdot,\cdot}_{\ell^{2}(n)},\]
\[\ip{\cdot,\cdot}_{\ell^{2}(n,u_{n})}\]
for the inner products on $\ell^{2}(n),\ell^{2}(n,u_{n})$ when there is potential confusion. If it is not otherwise specified, then
\[\ip{\cdot,\cdot}\]
refers to the inner product with respect to  $\ell^{2}(n,u_{n}).$

We say  that $N\subseteq A$ is $\varepsilon$-separated if for every $n_{1}\ne n_{2}$ in $N$ we have $\Delta(n_{1},n_{2})>\varepsilon.$ We use $N_{\varepsilon}(A,\Delta)$ for the smallest cardinality of a $\varepsilon$-separated subset of $A.$ Note that
\begin{equation}\label{E:separationspanning}
N_{2\varepsilon}(A,\Delta)\leq S_{\varepsilon}(A,\Delta)\leq N_{\varepsilon}(A,\Delta),
\end{equation}
and that if $A\subseteq B,$ then
\[N_{\varepsilon}(A,\Delta)\leq N_{\varepsilon}(B,\Delta).\]
\subsection{Preliminaries on Sofic Groups}\label{S:sofic}

We use $S_{n}$ for the symmetric group on $n$ letters. If $A$ is a  set, we will use $\Sym(A)$ for the set of bijections of $A.$
\begin{definition}\label{S:soficdefn}\emph{Let $\Gamma$ be a countable discrete group. A} sofic approximation of $\Gamma$ \emph{is a sequence $\Sigma=(\sigma_{i}\colon \Gamma\to S_{d_{i}})$ of functions (not assumed to be homomorphisms) so that}
\[\lim_{i\to\infty}u_{d_{i}}(\{1\leq k\leq d_{i}:\sigma_{i}(g)\sigma_{i}(h)(k)=\sigma_{i}(gh)(k)\})=1,\mbox{\emph{for all $g,h\in \Gamma$}}\]
\[\lim_{i\to\infty} u_{d_{i}}(\{1\leq k\leq d_{i}:\sigma_{i}(g)(k)\ne \sigma_{i}(h)(k)\})=1,\mbox{\emph{for all $g\ne h$ in $\Gamma.$}}\]
\emph{We will call $\Gamma$} sofic \emph{if it has a sofic approximation.}\end{definition}
It is known that all amenable groups and residually finite groups are sofic. Also, it is known that soficity is closed under free products  with amalgamation over amenable subgroups (see \cite{ESZ1},\cite{LPaun},\cite{DKP},\cite{KerrDykemaPichot2}, \cite{PoppArg}). Also graph products of sofic groups are sofic by \cite{CHR}. Additionally, residually sofic groups and locally sofic groups are sofic. Thus by Malcev's Theorem we know all linear groups are sofic. Finally, if $\Lambda$ is a subgroup of $\Gamma,$ and  $\Lambda$ is sofic and $\Gamma\actson \Gamma/\Lambda$ is amenable (in the sense of having a $\Gamma$-invariant mean) then $\Gamma$ is sofic (this can be seen by a mild generalization of the argument in Theorem 1 of \cite{ESZ1} using e.g. the observation after Definition 12.2.12 of \cite{BO}).

We will need to extend a sofic approximation to certain algebras associated to $\Gamma.$ Let $\CC(\Gamma)$ be the ring of finite formal linear combinations of elements of $\Gamma$ with addition defined naturally and multiplication defined by
\[\left(\sum_{g\in\Gamma}a_{g}g\right)\left(\sum_{h\in\Gamma}b_{h}h\right)=\sum_{g\in\Gamma}\left(\sum_{h\in\Gamma}a_{h}b_{h^{-1}g}\right)g.\]
We will also define a conjugate-linear involution on $\CC(\Gamma)$ by
\[\left(\sum_{g\in\Gamma}a_{g}g\right)^{*}=\sum_{g\in\Gamma}\overline{a_{g^{-1}}}g.\]

Given a sofic approximation $\Sigma=(\sigma_{i}\colon\Gamma\to S_{d_{i}})$ and $\alpha=\sum_{g\in\Gamma}\alpha_{g}g\in \CC(\Gamma)$ we define $\sigma_{i}(\alpha)\in M_{d_{i}}(\CC)$ by
\[\sigma_{i}(\alpha)=\sum_{g\in\Gamma}\alpha_{g}\sigma_{i}(g).\]
In order to talk about the asymptotic properties of this extended sofic approximation, we will need a more analytic object associated to $\Gamma.$

Let $\lambda\colon\Gamma\to \mathcal{U}(\ell^{2}(\Gamma))$ be  the left regular representation defined by $(\lambda(g)\xi)(h)=\xi(g^{-1}h).$ We will continue to use $\lambda$ for the linear extension to $\CC(\Gamma)\to B(\ell^{2}(\Gamma)).$ The \emph{group von Neumann algebra} of $\Gamma$ is defined by
\[\overline{\lambda(\CC(\Gamma))}^{WOT}\]
where WOT denotes the weak operator topology. We will use $L(\Gamma)$ to denote the group von Neumann algebra. Define $\tau\colon L(\Gamma)\to \CC$ by
\[\tau(x)=\ip{x\delta_{e},\delta_{e}}.\]
We leave it as an exercise to the reader to verify that $\tau$ has the following properties.

\begin{list}{ \arabic{pcounter}:~}{\usecounter{pcounter}}
\item $\tau(1)=1,$\\
\item $\tau(x^{*}x)\geq 0,$  with equality if and only if $x=0$,\\
\item $\tau(xy)=\tau(yx),$  for all $x,y\in M,$
\item $\tau$ is weak operator topology continuous.
\end{list}
We call the third property the tracial property. We will typically view $\CC(\Gamma)$ as a subset of $L(\Gamma).$ In particular, we will use $\tau$ as well for the functional on $\CC(\Gamma)$ which is just the restriction of $\tau$ on $L(\Gamma).$

	 In order to state our extension of a sofic approximation properly, we shall give a general definition. Recall that $*$-algebra is  a complex algebra equipped with an involution $*$ which is conjugate linear and antimultiplicative.

\begin{definition}\emph{ A} tracial $*$-algebra \emph{ is a pair $(A,\tau)$ where $A$ is a $*$-algebra equipped with a linear functional $\tau\colon A\to \CC$ so that}
\begin{list}{ \arabic{pcounter}:~}{\usecounter{pcounter}}
\item $\tau(1)=1,$\\
\item $\tau(x^{*}x)\geq 0,$  \emph{with equality if and only if $x=0$},\\
\item $\tau(xy)=\tau(yx),$  \emph{for all $x,y\in M,$}
\item \emph{For all $a\in A,$ there is a $C_{a}>0$ so that for all $x\in A,$ $|\tau(x^{*}a^{*}ax)|\leq C_{a}\tau(x^{*}x).$}
\end{list}
\emph{ For $a,b\in A$ we let $\ip{a,b}=\tau(b^{*}a)$ and we let $\|a\|_{2}=\tau(a^{*}a)^{1/2}.$ We let $L^{2}(A,\tau)$ be the Hilbert space completion of $A$ in this inner product. By condition $4$ of the definition, we have a representation $\lambda\colon A\to B(L^{2}(A,\tau))$ defined densely by $\lambda(a)x=ax$ for $x\in A.$ We let $\|a\|_{\infty}=\|\lambda(a)\|.$}
\end{definition}

We make $M_{n}(\CC)$ into a tracial $*$-algebra using $\tr=\frac{1}{n}\Tr$ where $\Tr$ is the usual trace. In particular, we use $\|A\|_{2}=\tr(A^{*}A)^{1/2}$ and $\|A\|_{\infty}$ will denote the operator norm.

We let $\CC[X_{1},\dots,X_{n}]$ be the free $*$-algebra on $n$-generators $X_{1},\dots,X_{n}.$  We will call elements of $\CC[X_{1},\dots,X_{n}]$ $*$-polynomials in $n$ indeterminates. For a $*$-algebra $A,$ for elements $a_{1},\dots,a_{n}\in A,$ and $P\in \CC[X_{1},\dots,X_{n}]$ we use $P(a_{1},\dots,a_{n})$ for the image of $P$ under the unique $*$-homomorphism $\CC[X_{1},\dots,X_{n}]\to A$ sending $X_{j}$ to $a_{j}.$

\begin{definition}\emph{ Let $(A,\tau)$ be a tracial $*$-algebra. An} embedding sequence \emph{is a sequence $\Sigma=(\sigma_{i}\colon A\to M_{d_{i}}(\CC))$ such that}
\[\sup_{i}\|\sigma_{i}(a)\|_{\infty}<\infty,\mbox{ \emph{for all $a\in A$}}\]
\[\|P(\sigma_{i}(a_{1}),\dots,\sigma_{i}(a_{n}))-\sigma_{i}(P(a_{1},\dots,a_{n}))\|_{2}\to 0, \mbox{\emph{ for all $a_{1},\dots,a_{n}\in A$ and all $P\in \CC[X_{1},\dots,X_{n}]$}}\]
\[\tr(\sigma_{i}(a))\to \tau(a)\mbox{\emph{ for all $a\in A$}.}\]
\end{definition}

We will frequently use the following fact: if $x_{1},\dots,x_{n}\in A$ and $P\in \CC[X_{1},\dots,X_{n}]$ then
\begin{equation}\label{E:L2convergence}
\|P(\sigma_{i}(x_{1}),\dots,\sigma_{i}(x_{n}))\|_{2}\to \|P(x_{1},\dots,x_{n})\|_{2}.
\end{equation}
To prove this, first note that as
\[\|P(\sigma_{i}(x_{1}),\dots,\sigma_{i}(x_{n}))-\sigma_{i}(P(x_{1},\dots,x_{n}))\|_{2}\to 0,\]
it suffices to handle the case $n=1$  and $P(X)=X.$ In this case,
\[\|\sigma_{i}(x)\|_{2}^{2}=\tr(\sigma_{i}(x)^{*}\sigma_{i}(x))\]
and since $\|\sigma_{i}(x)^{*}\sigma_{i}(x)-\sigma_{i}(x^{*}x)\|_{2}\to 0$ we have
\[|\tr(\sigma_{i}(x)^{*}\sigma_{i}(x))-\tr(\sigma_{i}(x^{*}x))|\to 0.\]
Since
\[\tr(\sigma_{i}(x^{*}x))\to \|x\|_{2}^{2},\]
we have proved $(\ref{E:L2convergence}).$

The proof of the next two propositions will be left to the reader.
\begin{proposition}\label{P:soficextension} Let $\Gamma$ be a countable discrete sofic group with sofic approximation $\Sigma=(\sigma_{i}\colon \Gamma\to S_{d_{i}}).$ Extend $\Sigma$ to maps $\sigma_{i}:\CC(\Gamma)\to M_{d_{i}}(\CC)$ linearly. Then $\Sigma$ is an embedding sequence of $(\CC(\Gamma),\tau).$
\end{proposition}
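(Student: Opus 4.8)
The plan is to verify the three defining conditions of an embedding sequence for the pair $(\CC(\Gamma),\tau)$, all of which ultimately reduce, by linearity and finiteness of supports, to a few elementary facts about the permutation matrices $\sigma_i(g)$, $g\in\Gamma$. Throughout I would record the distance formula $\|P-Q\|_2^2=\tfrac{2}{d_i}\#\{k:P(k)\neq Q(k)\}$ for permutation matrices $P,Q$, which converts the combinatorial sofic axioms into $\|\cdot\|_2$-estimates. Condition $1$ is then immediate: each $\sigma_i(g)$ is a permutation matrix, hence unitary, so for $a=\sum_g a_g g$ one has $\|\sigma_i(a)\|_\infty\le\sum_g|a_g|$, a bound independent of $i$.

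For Condition $3$ I would first observe that the first sofic axiom with $g=h=e$ forces $\sigma_i(e)$ to fix asymptotically all points: if $\sigma_i(e)\sigma_i(e)(k)=\sigma_i(e)(k)$ then $\sigma_i(e)(k)$ is a fixed point, and since $\sigma_i(e)$ is a bijection the fixed-point set has proportion tending to $1$. By the distance formula this gives $\|\sigma_i(e)-\id\|_2\to 0$, whence $\tr\sigma_i(e)\to 1=\tau(e)$ using $|\tr Z|\le\|Z\|_2$. For $g\neq e$, combining the second sofic axiom (with $h=e$) with $\sigma_i(e)\approx\id$ shows $\sigma_i(g)(k)\neq k$ for asymptotically all $k$, so $\tr\sigma_i(g)=\tfrac{1}{d_i}\#\Fix(\sigma_i(g))\to 0=\tau(g)$. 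Linearity then yields $\tr\sigma_i(a)\to\tau(a)$.

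Condition $2$ is the substantive one. Since $\sigma_i$ is linear and a $*$-polynomial is a linear combination of $*$-monomials, the triangle inequality reduces the claim to the case where $P$ is a single $*$-monomial. Two ingredients handle this. First, to reconcile the adjoint of $M_{d_i}(\CC)$ with the adjoint of $\CC(\Gamma)$, I would prove $\|\sigma_i(g)^*-\sigma_i(g^{-1})\|_2\to 0$: since $\sigma_i(g)^*=\sigma_i(g)^{-1}$, left-multiplying by the unitary $\sigma_i(g)$ gives $\|\sigma_i(g)^*-\sigma_i(g^{-1})\|_2=\|\id-\sigma_i(g)\sigma_i(g^{-1})\|_2$, which tends to $0$ by the first sofic axiom applied to $g,g^{-1}$ together with $\sigma_i(e)\approx\id$; by linearity this upgrades to $\|\sigma_i(a)^*-\sigma_i(a^*)\|_2\to 0$. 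Second, the approximate multiplicativity $\|\sigma_i(g)\sigma_i(h)-\sigma_i(gh)\|_2\to 0$ is exactly the first sofic axiom read through the distance formula, and expanding $\sigma_i(a)\sigma_i(b)-\sigma_i(ab)=\sum_{g,h}a_gb_h(\sigma_i(g)\sigma_i(h)-\sigma_i(gh))$ extends it to all $a,b\in\CC(\Gamma)$.

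To assemble a general monomial I would use the telescoping estimate: if $\|A_j\|_\infty,\|B_j\|_\infty\le C$ then $\|A_1\cdots A_m-B_1\cdots B_m\|_2\le C^{m-1}\sum_j\|A_j-B_j\|_2$, valid because $\|XY\|_2\le\|X\|_\infty\|Y\|_2$ and $\|XY\|_2\le\|X\|_2\|Y\|_\infty$. Using the uniform operator-norm bound from Condition $1$, this lets me first replace each matrix adjoint $\sigma_i(c_j)^*$ by $\sigma_i(c_j^*)$ with vanishing error, and then prove $\|\sigma_i(c_1)\cdots\sigma_i(c_m)-\sigma_i(c_1\cdots c_m)\|_2\to 0$ by induction on $m$, peeling off one factor and invoking the two-element case above. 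I expect the main obstacle to be purely bookkeeping: keeping the uniform $\|\cdot\|_\infty$ bounds in force so that the finitely many vanishing $\|\cdot\|_2$-errors are not amplified across the product, and correctly interchanging the two notions of adjoint. Once the monomial case is established, linearity closes Condition $2$ and hence the proposition.
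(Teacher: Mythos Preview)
Your argument is correct and complete; it verifies the three conditions for an embedding sequence in the natural way, and the bookkeeping with the uniform $\|\cdot\|_\infty$ bounds and the telescoping estimate is handled properly. The paper itself does not give a proof of this proposition---it states that ``the proof of the next two propositions will be left to the reader''---so there is nothing to compare against, but what you have written is exactly the routine verification the authors had in mind.
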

\begin{proposition}\label{P:perturbation} Let $(A,\tau)$ be a tracial $*$-algebra and $\Sigma=(\sigma_{i}\colon A\to M_{d_{i}}(\CC))$ be an embedding sequence. If $\Sigma'=(\sigma_{i}'\colon A\to M_{d_{i}}(\CC))$ is another sequence of functions so that
\[\sup_{i}\|\sigma_{i}'(a)\|_{\infty}<\infty,\mbox{ for all $a\in A,$}\]
\[\|\sigma_{i}(a)-\sigma_{i}'(a)\|_{2}\to 0,\mbox{ for all $a\in A,$}\]
then $\Sigma'$ is an embedding sequence.
\end{proposition}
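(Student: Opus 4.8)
The plan is to verify the three defining conditions of an embedding sequence for $\Sigma'$ directly. The first condition, uniform boundedness of $\|\sigma_{i}'(a)\|_{\infty}$, is precisely one of the hypotheses, so nothing is required there. For the trace condition, I would use the elementary estimate $|\tr(B)|\leq \|B\|_{2}$ valid in $(M_{d_{i}}(\CC),\tr)$, which follows from Cauchy--Schwarz since $|\tr(B)|=|\ip{B,1}|\leq \|B\|_{2}\|1\|_{2}=\|B\|_{2}$ (recall $\|1\|_{2}=\tr(1)^{1/2}=1$). Applying this to $B=\sigma_{i}'(a)-\sigma_{i}(a)$ gives $|\tr(\sigma_{i}'(a))-\tr(\sigma_{i}(a))|\leq \|\sigma_{i}'(a)-\sigma_{i}(a)\|_{2}\to 0$, and combining with $\tr(\sigma_{i}(a))\to\tau(a)$ yields $\tr(\sigma_{i}'(a))\to\tau(a)$.

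The substance of the proof is the multiplicativity condition: for $a_{1},\dots,a_{n}\in A$ and $P\in\CC[X_{1},\dots,X_{n}]$, I must show that $\|P(\sigma_{i}'(a_{1}),\dots,\sigma_{i}'(a_{n}))-\sigma_{i}'(P(a_{1},\dots,a_{n}))\|_{2}\to 0$. My approach is to split this difference into three pieces by inserting and subtracting the $\sigma_{i}$-versions:
\[
P(\sigma_{i}'(\overline a))-\sigma_{i}'(P(\overline a))=\big[P(\sigma_{i}'(\overline a))-P(\sigma_{i}(\overline a))\big]+\big[P(\sigma_{i}(\overline a))-\sigma_{i}(P(\overline a))\big]+\big[\sigma_{i}(P(\overline a))-\sigma_{i}'(P(\overline a))\big],
\]
where $\overline a$ abbreviates the tuple $(a_{1},\dots,a_{n})$. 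The middle bracket vanishes in the limit because $\Sigma$ is an embedding sequence, and the third bracket vanishes because $\|\sigma_{i}(b)-\sigma_{i}'(b)\|_{2}\to 0$ applied to $b=P(a_{1},\dots,a_{n})\in A$. Thus everything reduces to controlling the first bracket.

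The main obstacle is therefore showing $\|P(\sigma_{i}'(\overline a))-P(\sigma_{i}(\overline a))\|_{2}\to 0$. By linearity of $P$ in its monomials it suffices to treat a single $*$-monomial $Y_{1}\cdots Y_{k}$, where each $Y_{j}$ is some $X_{l}$ or $X_{l}^{*}$; evaluating at $\sigma_{i}$ versus $\sigma_{i}'$ produces matrix strings $b_{1}\cdots b_{k}$ and $c_{1}\cdots c_{k}$ with each $b_{j},c_{j}\in\{\sigma_{i}(a_{l}),\sigma_{i}(a_{l})^{*}\}$ respectively $\{\sigma_{i}'(a_{l}),\sigma_{i}'(a_{l})^{*}\}$. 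I would use the telescoping identity
\[
c_{1}\cdots c_{k}-b_{1}\cdots b_{k}=\sum_{j=1}^{k}c_{1}\cdots c_{j-1}(c_{j}-b_{j})b_{j+1}\cdots b_{k}
\]
together with the bimodule bound $\|XYZ\|_{2}\leq \|X\|_{\infty}\|Y\|_{2}\|Z\|_{\infty}$ on $M_{d_{i}}(\CC)$ (which follows from $X^{*}X\leq \|X\|_{\infty}^{2}I$). Each summand is then bounded by a product of operator norms of the $b_{l},c_{l}$ times $\|c_{j}-b_{j}\|_{2}$; the operator norms are uniformly bounded in $i$ by the two boundedness hypotheses, and $\|c_{j}-b_{j}\|_{2}\to 0$ since $\|\sigma_{i}'(a_{l})-\sigma_{i}(a_{l})\|_{2}\to 0$ and passing to adjoints preserves $\|\cdot\|_{2}$. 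Hence each summand, and so the entire monomial difference, tends to $0$. I expect the only mildly delicate bookkeeping to be this reduction to monomials and the handling of the $X_{l}^{*}$ entries, both of which are routine given the invariance $\|B^{*}\|_{2}=\|B\|_{2}$.
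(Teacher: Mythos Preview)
Your proof is correct. The paper itself does not supply a proof of this proposition; it is one of the two propositions the author explicitly leaves to the reader. Your argument is exactly the standard verification one would expect: the bimodule inequality $\|XYZ\|_{2}\leq \|X\|_{\infty}\|Y\|_{2}\|Z\|_{\infty}$ combined with a telescoping sum to handle monomials, and the Cauchy--Schwarz bound $|\tr(B)|\leq \|B\|_{2}$ for the trace condition, are precisely the ingredients needed, and your three-term splitting is the natural way to organize the estimate.
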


We will in fact need to extend our sofic approximation to the group von Neumann algebra. For this, we use the following.
\begin{lemma}[Lemma 5.5 in \cite{Me}] \label{L:soficextension}Let $\Gamma$ be a countable discrete group. Then any embedding sequence for $\CC(\Gamma)$ extends to one for $L(\Gamma).$
\end{lemma}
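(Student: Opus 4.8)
\emph{Proof proposal.} The plan is to regard $\CC(\Gamma)$ as a $\|\cdot\|_{2}$-dense $*$-subalgebra of the tracial $*$-algebra $L(\Gamma)$ and to extend the maps $\sigma_{i}$ by approximation, the two essential tools being the separability of $L(\Gamma)$ in $\|\cdot\|_{2}$ (since $L^{2}(L(\Gamma),\tau)=\ell^{2}(\Gamma)$ is separable) and the Kaplansky density theorem. Because $L(\Gamma)=\overline{\lambda(\CC(\Gamma))}^{WOT}$, the subalgebra $\CC(\Gamma)$ is strongly dense, so Kaplansky density lets us approximate any $x\in L(\Gamma)$ in $\|\cdot\|_{2}$ by $a\in\CC(\Gamma)$ with $\|a\|_{\infty}\leq\|x\|_{\infty}$. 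I would fix, once and for all, a countable $\|\cdot\|_{2}$-dense $*$-subalgebra $A_{0}\subseteq\CC(\Gamma)$ (for instance the $\QQ(i)$-group algebra) and, for each $x\in L(\Gamma)$, a sequence $a_{x,m}\in A_{0}$ with $\|a_{x,m}\|_{\infty}\leq\|x\|_{\infty}$ and $\|a_{x,m}-x\|_{2}\to 0$.

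The first obstacle, which I expect to be the crux, is that the embedding axioms bound $\sup_{i}\|\sigma_{i}(a)\|_{\infty}$ only for each \emph{fixed} $a$, and for the sofic microstates this bound is governed by the $\ell^{1}$-norm of the coefficients of $a$ rather than by $\|a\|_{\infty}=\|\lambda(a)\|$; so a naive extension $\tilde\sigma_{i}(x)=\sigma_{i}(a_{x,m})$ need not have controlled operator norm. I would remedy this by \textbf{spectral truncation}. For self-adjoint $a\in A_{0}$, the matrices $\sigma_{i}(a)$ are asymptotically self-adjoint, and the moment convergence $\tr(\sigma_{i}(a)^{k})\to\tau(a^{k})$ (a consequence of (\ref{E:L2convergence})) shows that their empirical spectral distributions converge weakly, with matching second moments, to the spectral distribution of $a$, which is supported in $[-\|a\|_{\infty},\|a\|_{\infty}]$. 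Hence cutting the spectrum of $\sigma_{i}(a)$ down to $[-\|a\|_{\infty}-\varepsilon,\|a\|_{\infty}+\varepsilon]$ by functional calculus alters it by $o(1)$ in $\|\cdot\|_{2}$ (the second-moment convergence supplies the uniform integrability) while forcing the operator norm below $\|a\|_{\infty}+\varepsilon$; the general case is handled through real and imaginary parts. By Proposition \ref{P:perturbation} the truncated sequence is again an embedding sequence, so after a diagonalization over the countable set $A_{0}$ I may assume $\limsup_{i}\|\sigma_{i}(a)\|_{\infty}\leq\|a\|_{\infty}$ for all $a\in A_{0}$.

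With operator-norm control in hand I would build the extension by a diagonal argument followed by a continuity extension. Fixing a countable $\|\cdot\|_{2}$-dense set $D\subseteq L(\Gamma)$, each point approximated from $A_{0}$ as above, I choose a slowly growing $m=m(i)$ so that on every finite subset of $D$, and for every $*$-polynomial drawn from a fixed exhaustion, the three embedding conditions hold for the approximants up to an error tending to $0$; since only countably many constraints arise, and for \emph{fixed} approximants the conditions hold by the embedding property on $A_{0}$, such a choice exists. This defines $\tilde\sigma_{i}$ on $D$. To pass to all of $L(\Gamma)$ I use that these maps are asymptotically $\|\cdot\|_{2}$-Lipschitz: the truncation is $\|\cdot\|_{2}$-contractive (Lipschitz functional calculus in Hilbert--Schmidt norm), while the underlying $\sigma_{i}$ satisfy $\|\sigma_{i}(a)-\sigma_{i}(a')\|_{2}\to\|a-a'\|_{2}$ by applying (\ref{E:L2convergence}) to $P(X,Y)=X-Y$. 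This equicontinuity, together with the uniform operator-norm bound, lets me extend $\tilde\sigma_{i}$ continuously from $D$ to $L(\Gamma)$ without disturbing the conditions.

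Finally I would verify the three axioms for $\tilde\sigma_{i}$ on $L(\Gamma)$. The operator-norm bound is immediate from the truncation. For the asymptotic $*$-homomorphism property and the trace convergence, the key point is that on operator-norm-bounded sets both multiplication and $\tr$ are $\|\cdot\|_{2}$-continuous with modulus depending only on the norm bound; so, given $x_{1},\dots,x_{n}$ and $P$, I approximate each $x_{j}$ in $\|\cdot\|_{2}$ by an element of $A_{0}$, insert the corresponding $P(\sigma_{i}(\vec a))$ and $\sigma_{i}(P(\vec a))$, and control the errors by (a) norm-controlled $\|\cdot\|_{2}$-continuity of $P$ on the matrix side, (b) the embedding property on $A_{0}$, and (c) the $\|\cdot\|_{2}$-smallness of the difference between the chosen approximant of $P(\vec x)$ and $P(\vec a)$; the trace statement is the same argument with $P=X$. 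The genuinely delicate point, and where I would spend the most care, is coordinating the approximation rates in the diagonal argument so that the result remains a single sequence with \emph{ordinary} limits (not merely limits along an ultrafilter) while meeting all conditions across the uncountable algebra $L(\Gamma)$ — which is exactly where separability and the equicontinuity from asymptotic $\|\cdot\|_{2}$-isometry are indispensable.
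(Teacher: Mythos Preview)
The paper does not actually give a proof of this lemma; it is simply quoted as Lemma~5.5 of \cite{Me}, so there is no in-paper argument to compare against.

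Your outline is the standard route to such an extension result, and the ingredients you isolate---Kaplansky density for norm-controlled $\|\cdot\|_{2}$-approximation in $L(\Gamma)$, spectral truncation to convert the a priori $\ell^{1}$-bound on $\|\sigma_{i}(a)\|_{\infty}$ into a bound by $\|\lambda(a)\|$, and a diagonal argument over a countable dense set---are exactly the right ones. The truncation step is the substantive idea, and your justification via moment (hence weak plus second-moment) convergence of the empirical spectral distributions is sound.

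One point deserves more care than your phrasing suggests. You describe the passage from the countable dense set $D$ to all of $L(\Gamma)$ as a continuous extension using ``asymptotic $\|\cdot\|_{2}$-Lipschitz'' behavior, but the convergence in (\ref{E:L2convergence}) is only pointwise in the pair $(a,a')$, not uniform, so for each fixed $i$ the map $a\mapsto\sigma'_{i}(a)$ need not be Lipschitz and there is no genuine equicontinuous extension. The correct way to organize the argument is to define $\tilde\sigma_{i}(x)$ for \emph{every} $x\in L(\Gamma)$ directly as $\sigma'_{i}(a_{x,m(i,x)})$ and then verify axiom~2 for each \emph{fixed} tuple $(x_{1},\dots,x_{n},P)$ by inserting fixed auxiliary approximants $a_{j}\in\CC(\Gamma)$ of the $x_{j}$: the middle term $\|P(\sigma'_{i}(\vec a))-\sigma'_{i}(P(\vec a))\|_{2}$ vanishes by the embedding property on $\CC(\Gamma)$, and the flanking terms are controlled by arranging that $\limsup_{i}\|\tilde\sigma_{i}(x_{j})-\sigma'_{i}(a_{j})\|_{2}\le\|x_{j}-a_{j}\|_{2}$, which in turn requires $m(i,x_{j})$ to grow slowly enough that the asymptotic isometry holds against countably many fixed comparison elements. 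You already flag this coordination of rates as ``the genuinely delicate point,'' so you have identified the issue; I would just drop the equicontinuity language, since no map-level continuity is actually being used.
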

We will  use the preceding lemma when $\Gamma$ is sofic, in combination with Proposition \ref{P:soficextension}. We will often need the following  volume-packing estimate.

\begin{lemma}\label{L:triviallemma} Let $n\in\NN,$ and $p\in B(\ell^{2}(n,u_{n}))$ an orthogonal projection. For any $M,\varepsilon>0$ we have
\[S_{\varepsilon}(Mp\Ball(\ell^{2}(n,u_{n})))\leq \left(\frac{3M+\varepsilon}{3}\right)^{2\tr(p)n}.\]
\end{lemma}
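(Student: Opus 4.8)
The plan is to reduce the statement to an entirely standard volume‑packing (covering‑number) estimate for a Euclidean ball, the only genuine content being the correct bookkeeping of dimensions. First I would identify the set $Mp\Ball(\ell^{2}(n,u_{n}))$ geometrically. Since $p$ is an orthogonal projection, $p\Ball(\ell^{2}(n,u_{n}))$ is exactly the closed unit ball of the range of $p$: indeed $\|px\|_{2}\le\|x\|_{2}$, so $px$ lies in this ball whenever $\|x\|_{2}\le 1$, while conversely any $y\in\ran(p)$ with $\|y\|_{2}\le 1$ equals $py$. Hence $Mp\Ball(\ell^{2}(n,u_{n}))$ is the ball of radius $M$, in the metric induced by $\|\cdot\|_{2}$, of the complex subspace $\ran(p)$. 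This subspace has complex dimension $\rank(p)=\tr(p)n$, and therefore real dimension $2\tr(p)n$; fixing a $\|\cdot\|_{2}$-orthonormal basis identifies $(Mp\Ball(\ell^{2}(n,u_{n})),\|\cdot\|_{2})$ isometrically with the Euclidean ball of radius $M$ in $\RR^{2\tr(p)n}$. The factor of $2$ in the exponent of the claimed bound is precisely this passage from complex to real dimension.

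Next I would invoke the right‑hand inequality of \eqref{E:separationspanning}, namely $S_{\varepsilon}(A,\Delta)\le N_{\varepsilon}(A,\Delta)$, so that it suffices to bound the maximal size of an $\varepsilon$-separated subset. Let $N$ be a maximal $\varepsilon$-separated subset of $Mp\Ball(\ell^{2}(n,u_{n}))$. Because any two distinct points of $N$ are at distance strictly greater than $\varepsilon$, the open $\|\cdot\|_{2}$-balls of radius $\varepsilon/3$ centered at the points of $N$ are pairwise disjoint (the sum of two such radii is $2\varepsilon/3<\varepsilon$). Since each center has norm at most $M$, all of these small balls are contained in the ball of radius $M+\varepsilon/3$ about the origin. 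The choice of packing radius $\varepsilon/3$ is exactly what produces the numerator $3M+\varepsilon$.

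Finally I would compare Euclidean volumes in $\RR^{d}$ with $d=2\tr(p)n$, using that the volume of a ball scales as the $d$-th power of its radius. Disjointness together with the containment just noted gives $|N|\,(\varepsilon/3)^{d}\le (M+\varepsilon/3)^{d}$, whence $|N|\le\left(\frac{M+\varepsilon/3}{\varepsilon/3}\right)^{2\tr(p)n}=\left(\frac{3M+\varepsilon}{\varepsilon}\right)^{2\tr(p)n}$, which is the asserted estimate. I expect no serious obstacle here — this is why the lemma is labelled trivial — beyond two small points of care: that the ambient metric space is genuinely a finite‑dimensional Euclidean ball, so that the volume‑ratio argument applies unchanged, and that the quantity entering the exponent is the \emph{real} dimension $2\tr(p)n$ rather than the complex dimension $\tr(p)n$.
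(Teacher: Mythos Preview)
Your proposal is correct and follows essentially the same volume-packing argument as the paper: take a maximal $\varepsilon$-separated set, pack disjoint balls of radius $\varepsilon/3$ inside the ball of radius $M+\varepsilon/3$ in $\ran(p)$, and compare volumes using that the real dimension is $2\tr(p)n$. The only cosmetic difference is that you explicitly invoke $S_{\varepsilon}\le N_{\varepsilon}$ where the paper instead notes that a maximal $\varepsilon$-separated set is automatically $\varepsilon$-dense; both routes amount to the same thing.
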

\begin{proof} Let $A\subseteq Mp\Ball(\ell^{2}(n,u_{n}))$ be a maximal $\varepsilon$-separated subset of $Mp\Ball(\ell^{2}(n,u_{n})).$ Thus $A$ is $\varepsilon$-dense. Further,
\[(M+\varepsilon/3)p\Ball(\ell^{2}(n,u_{n}))\supseteq \bigcup_{x\in A}x+(\varepsilon/3)p\Ball(\ell^{2}(n,u_{n})),\]
and the right-hand side is a disjoint union. By linear algebra, the real dimension of the image of $p$ is $2\tr(p)n.$ Thus computing volumes:
\[(M+\varepsilon/3)^{2\tr(p)n}\vol(p\Ball(\ell^{2}(n,u_{n}))\geq |A|(\varepsilon/3)^{2\tr(p)n}\vol(p\Ball(\ell^{2}(n,u_{n}))).\]
Thus
\[|A|\leq\left(\frac{3M+\varepsilon}{\varepsilon}\right)^{2\tr(p)n}.\]

\end{proof}

\section{Definition of Entropy in the Presence of a Polish Model}\label{S:Polishdefinition}

Our definition will follow the ideas in \cite{KLi2} and will use \emph{dynamically generating} pseudometrics. We need to state the definition so that it works for actions on Polish spaces. We will need to assume that the pseudometrics are bounded as this is no longer automatic in the noncompact case.

\begin{definition}\emph{Let $\Gamma$ be a countable discrete group and $X$ a Polish space with $\Gamma\actson X$ by homeomorphisms. A bounded continuous pseudometric $\Delta$ on $X$ is said to be} dynamically generating \emph{if for any $x\in X$ and any open neighborhood $U$ of $x$ in $X,$ there is a $\delta>0$ and a $F\subseteq \Gamma$ finite so that if $x\in X$ and $\max_{g\in F}d(gx,gy)<\delta,$ then $y\in U.$}\end{definition}
Recall that if $X$ is compact, and $\Gamma$ is a countable discrete group acting on $X$ by homeomorphisms and $\Delta$ is a continuous pseudometric on $X,$ then $\Delta$ is said to be dynamically generating if
\[\sup_{g\in\Gamma}\Delta(gx,gy)>0\mbox{ whenever $x\ne y$ in $X$}\]
(see e.g. \cite{Li2} Section 4). The fact that this is equivalent to our definition is an easy exercise using compactness of $X.$

When $X$ is Polish, as we shall see in the proof of Lemma \ref{L:switching} it really is necessary to require the existence of $U,\delta,F$ as in the preceding definition instead of just
\[\sup_{g\in\Gamma}\Delta(gx,gy)>0\mbox{ whenever $x\ne y$ in $X$}.\]
One way to realize why this is the correct definition is as follows: let $(X,\Delta,\Gamma)$ be as in the preceding definition and let $Y$ be $X$ modded out by the equivalence relation of $a\thicksim b$ if $\Delta(a,b)=0.$ Give $Y$ the metric
\[\overline{\Delta}([a],[b])=\Delta(a,b).\]
Consider the continuous map
\[\Phi\colon X\to Y^{\Gamma}\]
given by
\[\Phi(x)(g)=[gx],\]
where we use $[a]$ for the equivalence class of $a\in X.$ The existence of $U,\delta$ as in the definition is the \emph{precise} requirement one needs to guarantee that $\Phi$ is a homeomorphism onto its image. This will be explicitly proven in Lemma \ref{L:switching}.

If $(X,\Delta)$ is a pseudometric space, we let $\Delta_{2}$ be the pseudometric on $X^{n}$ defined by
\[\Delta_{2}(x,y)^{2}=\frac{1}{n}\sum_{j=1}^{n}\Delta(x(j),y(j))^{2}.\]

\begin{definition}\emph{ Let $\Gamma$ be a countable discrete group and $X$ a Polish space with $\Gamma\actson X$ by homeomorphisms. Let $\Delta$ be a bounded pseudometric on $X.$ For a function $\sigma\colon \Gamma\to S_{d},$ for some $d\in \NN,$ a finite $F\subseteq \Gamma,$ and a $\delta>0$ we let $\Map(\Delta,F,\delta,\sigma)$ be all functions $\phi\colon \{1,\dots,d\}\to X$ so that}
\[\max_{g\in F}\Delta_{2}(\phi\circ \sigma(g),g\phi)<\delta,\]
\end{definition}
We caution the reader that though we shall typically require $X$ to be Polish, we will not require our pseudometrics to be complete. We will typically only need to care about the topological consequences of being Polish and not any metric properties. Note that $\Map(\Delta,F,\delta,\sigma)$ does not account for the measure-theoretic structure of $X.$  Given a  Polish space $X$, a finite $L\subseteq C_{b}(X),$ a $\delta>0,$ and $\mu\in \Prob(X)$ let
\[U_{L,\delta}(\mu)=\bigcap_{f\in L}\left\{\nu\in\Prob(X):\left|\int f\,d\nu-\int f\,d\mu\right|<\delta\right\}.\]
Then $U_{L,\delta}(\mu)$ form a basis of neighborhoods of $\mu$ for the weak topology. Here $C_{b}(X)$ is  the space of bounded continuous functions on $X.$  Recall that $u_{d}$ denotes the uniform probability measure on $\{1,\dots,d\}.$
\begin{definition}\emph{Suppose that $\mu$ is a $\Gamma$-invariant Borel probability measure on $X.$ For $F\subseteq\Gamma$ finite, $\delta>0$ and $L\subseteq C_{b}(X)$ finite, and $\sigma\colon\Gamma\to S_{d}$ for some $d\in\NN$  we let $\Map_{\mu}(\Delta,F,\delta,L,\sigma)$ be the set of all $\phi\in\Map(\Delta,F,\delta,\sigma)$ so that}
\[\phi_{*}(u_{d})\in U_{L,\delta}(\mu).\]
\emph{for all $f\in L.$}\end{definition}

\begin{definition}\emph{Let $\Gamma$ be a countable discrete sofic group with sofic approximation $\Sigma=(\sigma_{i}\colon\Gamma\to S_{d_{i}}).$ Let $X$ be a Polish space with $\Gamma\actson X$ by homeomorphisms and $\mu$ a $\Gamma$-invariant, Borel, probability measure on $X.$ We define the entropy of $\Gamma\actson (X,\mu)$ by}
\[h_{\Sigma,\mu}(\Delta,\varepsilon, F,\delta,L)=\limsup_{i\to\infty}\frac{1}{d_{i}}\log N_{\varepsilon}(\Map_{\mu}(\Delta,F,\delta,L,\sigma_{i}),\Delta_{2})\]
\[h_{\Sigma,\mu}(\Delta,\varepsilon)=\inf_{\substack{F\subseteq\Gamma\textnormal{finite},\\ \delta>0,\\ L\subseteq C_{b}(X)  \textnormal{finite}}}h_{\Sigma,\mu}(\Delta,F,\delta,L,\varepsilon)\]
\[h_{\Sigma,\mu}(\Delta)=\sup_{\varepsilon>0}h_{\Sigma,\mu}(\Delta,\varepsilon).\]
\end{definition}
By (\ref{E:separationspanning}) we know that  $h_{\Sigma,\mu}(\Delta)$ is unchanged if we replace $N_{\varepsilon}$ with $S_{\varepsilon}.$ Because we use $N_{\varepsilon}$ instead of $S_{\varepsilon},$ we have $h_{\Sigma,\mu}(\Delta,\varepsilon,F,\delta,L)\leq h_{\Sigma,\mu}(\Delta,\varepsilon,F',\delta',L')$ if $F\supseteq F',\delta\leq \delta',L\supseteq L'.$

	The reader may be concerned about finiteness of the expression $N_{\varepsilon}(\Map_{\mu}(\Delta,F,\delta,L,\sigma_{i}),\Delta_{2}),$ since $X$ is not compact. We note that if $\varepsilon>0$ is given, there is a finite $L\subseteq C_{b}(X),$ and $\delta>0$ so that for any finite $F\subseteq \Gamma$ we have
\[N_{\varepsilon}(\Map_{\mu}(\Delta,F,\delta,L,\sigma_{i}),\Delta_{2})<\infty.\]
To see this, note that by Prokhorov's Theorem we may choose a $K\subseteq X$ compact so that
\[\mu(K)\geq 1-\varepsilon.\]
It is not hard to see that if $L\subseteq C_{b}(X)$ is sufficiently large and $\delta>0$ is sufficiently small, then for any $\phi\in \Map_{\mu}(\Delta,F,\delta,L,\sigma_{i}),$
\[\frac{1}{d}|\{j:\phi(j)\notin K\}|=\phi_{*}(u_{d})(K^{c})\leq 2\varepsilon.\]
Suppose that $S$ is a finite $\varepsilon$-dense subset of $K$. Let $M$ be the diameter of $(X,\Delta)$  and fix $x_{0}\in X$. If we set $A=\phi^{-1}(K),$ then we find that $u_{d_{i}}(A)\geq 1-\varepsilon.$  We can find a $\psi\in S^{A}$ so that if $\widetilde{\psi}\colon\{1,\dots,d_{i}\}\to X$ is defined by
\[\widetilde{\psi}(j)=\begin{cases}
\psi(j),& \textnormal{ if $j\in A$,}\\
x_{0},& \textnormal{ if $j\in \{1,\dots,d_{i}\}\setminus A$.}
\end{cases}\]
Then
\[\Delta_{2}(\phi,\widetilde{\psi})^{2}\leq 2M^{2}\varepsilon+\varepsilon^{2}.\]
Thus
\[N_{2(2M^{2}\varepsilon+\varepsilon^{2})^{1/2}}(\Map_{\mu}(\Delta,F,\delta,L,\sigma_{i}),\Delta_{2})\leq S_{(2M^{2}\varepsilon+\varepsilon^{2})^{1/2}}(\Map_{\mu}(\Delta,F,\delta,L,\sigma_{i}),\Delta_{2})\leq S_{\varepsilon}(K)<\infty.\]

The main goal of this section is to show that $h_{\Sigma,\mu}(\Delta)$ is the same as the measure entropy of $\Gamma\actson (X,\mu)$ as defined by Bowen and extended by Kerr-Li. Throughout we use $h_{\Sigma,\mu}(X,\Gamma)$ for sofic measure entropy as defined by \cite{Bow},\cite{KLi}. We will use the formulation of sofic entropy in terms of partitions due to Kerr in \cite{KerrPartition}.  However, we will use the terminology of observables as Bowen did in \cite{Bowenfinvariant}.

\begin{definition}\emph{Let $(X,\mathcal{M},\mu)$ be a standard probability space. Let $\mathcal{S}$ be a subalgebra of $\mathcal{M}$ (here $\mathcal{S}$ is not necessarily a $\sigma$-algebra).} A finite $\mathcal{S}$-measurable observable \emph{is a measurable map $\alpha\colon X\to A$ where $A$ is a finite set and $\alpha^{-1}(\{a\})\in \mathcal{S}$ for all $a\in A.$ If $\mathcal{S}=\mathcal{M}$ we simply call $\alpha$} a finite observable. \emph{Another finite $\mathcal{S}$-measurable observable $\beta\colon X\to B$ is said to} refine \emph{$\alpha,$ written $\alpha\leq \beta,$ if there is a  $\pi\colon B\to A$ so that $\pi(\beta(x))=\alpha(x)$ for almost every $x\in X.$ If $\Gamma$ is a countable discrete group and $\Gamma\actson(X,\mathcal{M},\mu)$ by measure-preserving transformations we say that $\mathcal{S}$ is generating if $\mathcal{M}$ is the $\sigma$-algebra generated by $\{gA:A\in\mathcal{S}\}$ (up to sets of measure zero).}
\end{definition}

For the next definition we need to set up some notation. Given a standard probability space $(X,\mathcal{M},\mu),$ a countable discrete group with $\Gamma\actson (X,\mathcal{M},\mu)$ by measure-preserving transformations, a finite observable $\alpha\colon X\to A$ and $F\subseteq\Gamma$ finite, we let
\[\widetilde{\alpha}^{F}\colon X\to A^{F}\]
be defined by
\[\widetilde{\alpha}^{F}(x)(g)=\alpha(g^{-1}x).\]

\begin{definition}\emph{Let $\Gamma$ be a countable discrete group and $\sigma\in S_{d}^{\Gamma}$ for some $d\in\NN.$ Let $(X,\mathcal{M},\mu)$ be a standard probability space and let $\mathcal{S}\subseteq\mathcal{M}$ be a subalgebra. Let $\alpha\colon X\to A$ be a finite $\mathcal{S}$ measurable-observable. Given $F\subseteq\Gamma$ finite, and $\delta>0,$ we let $\AP(\alpha,F,\delta,\sigma)$ be the set of all $\phi\colon\{1,\dots,d\}\to A^{F}$ so that}
\[\sum_{a\in A^{F}}\left|u_{d}((\phi^{-1}(\{a\}))-\mu((\widetilde{\alpha}^{F})^{-1}(\{a\}))\right|<\delta.\]
\[u_{d_{i}}(\{j:\phi(j)(g)=\phi(\sigma_{i}(g)^{-1}(j))(e)\})<\delta,\mbox{\emph{ for all $g\in F.$}}\]
\end{definition}

We now give Kerr's definition of sofic measure entropy in \cite{KerrPartition}.
\begin{definition}\emph{Let $\Gamma$ be a countable discrete sofic group with sofic approximation $\Sigma=(\sigma_{i}\colon\Gamma\to S_{d_{i}}).$ Let $(X,\mathcal{M},\mu)$ be a standard probability space and $\Gamma\actson (X,\mathcal{M},\mu)$ by measure-preserving transformations. Let $\mathcal{S}$ be a subalgebra of $\mathcal{M}.$ Let $\alpha\colon X\to A$ be a finite $\mathcal{S}$-measurable observable, and let $\beta\colon X\to B$ refine $\alpha,$ and $\pi\colon B\to A$ as in the definition of $\alpha\leq\beta.$ For any $F\subseteq \Gamma$ finite, we use}
\[\pi^{*}\colon B^{F}\to A\]
\emph{for}
\[\pi^{*}(b)=\pi(b(e)).\]
{We set}
\[h_{\Sigma,\mu}(\alpha;\beta,F,\delta)=\limsup_{i\to\infty}\frac{1}{d_{i}}\log\left|(\pi^{*})^{d_{i}}(\AP(\beta,F,\delta,\sigma_{i}))\right|\]
\[h_{\Sigma,\mu}(\alpha;\beta)=\inf_{\substack{ F\subseteq\Gamma \textnormal{ finite},\\ \delta>0}}h_{\Sigma,\mu}(\alpha;\beta,F,\delta).\]
\emph{We then set}
\[h_{\Sigma,\mu}(\alpha;\mathcal{S})=\inf_{\alpha\leq \beta}h_{\Sigma,\mu}(\alpha;\beta)\]
\[h_{\Sigma,\mu}(\mathcal{S})=\sup_{\alpha}h_{\Sigma,\mu}(\alpha;\beta)\]
\emph{where the last infimum and supremum are over all $\mathcal{S}$-measurable observables. }
\end{definition}

We need the following result of Kerr.
\begin{theorem}\label{T:KerrTheorem} Let $\Gamma$ be a countable discrete sofic group with sofic approximation $\Sigma.$ Let $(X,\mathcal{M},\mu)$ be a standard probability space with $\Gamma\actson (X,\mathcal{M},\mu)$ by measure-preserving transformations. Let $\mathcal{S}\subseteq\mathcal{M}$ be a generating subalgebra. Then
\[h_{\Sigma,\mu}(\mathcal{S})=h_{\Sigma,\mu}(X,\Gamma).\]
\end{theorem}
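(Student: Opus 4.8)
The plan is to reduce the statement to the independence of $h_{\Sigma,\mu}(\mathcal{S})$ from the choice of generating subalgebra. Note that $\mathcal{S}=\mathcal{M}$ is itself a generating subalgebra, and for this choice the sets $\AP(\beta,F,\delta,\sigma_i)$ are exactly the microstates appearing in the original Bowen--Kerr-Li definition, so that $h_{\Sigma,\mu}(\mathcal{M})=h_{\Sigma,\mu}(X,\Gamma)$ is precisely the identification of Kerr's partition formulation with the original entropy. Thus it suffices to prove $h_{\Sigma,\mu}(\mathcal{S})=h_{\Sigma,\mu}(\mathcal{M})$ whenever $\mathcal{S}$ is generating, and this is the part I would focus on.

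The first ingredient is a monotonicity lemma: for a fixed observable $\alpha$ and refinements $\alpha\leq\beta\leq\beta'$ one has $h_{\Sigma,\mu}(\alpha;\beta')\leq h_{\Sigma,\mu}(\alpha;\beta)$. The proof is a pushforward argument. If $q\colon B'\to B$ is the factor map witnessing $\beta\leq\beta'$, then the coordinatewise map $(B')^{F}\to B^{F}$ it induces carries $\AP(\beta',F,\delta,\sigma_i)$ into $\AP(\beta,F,\delta',\sigma_i)$ for a controlled $\delta'$, since it preserves empirical distributions and the consistency condition; moreover the projection $\pi^*$ attached to $\beta'$ factors through this map and the projection attached to $\beta$. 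Hence the finite set one counts for $\beta'$ is contained in the one counted for $\beta$, and taking $\limsup_i$ and infima over $F,\delta$ gives the inequality. The same argument applied to the base observable shows $h_{\Sigma,\mu}(\cdot\,;\mathcal{M})$ is monotone non-decreasing under refinement of $\alpha$. Consequently $h_{\Sigma,\mu}(\alpha;\mathcal{S})$ is a decreasing limit along finer and finer $\mathcal{S}$-observables, and I only ever work with arbitrarily fine refinements.

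The second ingredient is a continuity estimate: if $\alpha,\alpha'$ are observables into a common finite alphabet $A$ that agree off a set of $\mu$-measure $<\varepsilon$, then the normalized counts $\frac{1}{d_i}\log\left|(\pi^*)^{d_i}(\AP(\cdot,F,\delta,\sigma_i))\right|$ differ by at most a Shannon-type quantity $\varepsilon\log|A|+H(\varepsilon)$ (with $H$ the binary entropy), uniformly in $i$; this is a Stirling/volume-counting estimate of exactly the flavor of Lemma \ref{L:triviallemma}. Since $\mathcal{S}$ is generating, every $\mathcal{M}$-observable is approximable in $\mu$-measure by observables built from the $\Gamma$-translates of $\mathcal{S}$, and those translates are precisely what the microstate formalism supplies through $\widetilde{\beta}^{F}$. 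Concretely, given an $\mathcal{M}$-refinement $\beta$ of an $\mathcal{S}$-observable $\alpha$, I approximate $\beta$ by an $\mathcal{S}$-observable $\beta_0$, replace it by $\alpha\vee\beta_0\in\mathcal{S}$, and use continuity to get $h_{\Sigma,\mu}(\alpha;\alpha\vee\beta_0)\leq h_{\Sigma,\mu}(\alpha;\beta)+\varepsilon$; taking infima yields $h_{\Sigma,\mu}(\alpha;\mathcal{S})=h_{\Sigma,\mu}(\alpha;\mathcal{M})$ for every $\alpha\in\mathcal{S}$. An analogous approximation at the level of the base observable, combined with the base-monotonicity above, upgrades this to equality of the suprema, giving $h_{\Sigma,\mu}(\mathcal{S})=h_{\Sigma,\mu}(\mathcal{M})$.

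I expect the main obstacle to be the continuity estimate carried out uniformly in the sofic index $i$, together with the bookkeeping needed to ensure the approximation of an $\mathcal{M}$-observable by an $\mathcal{S}$-observable --- which only generates after its $\Gamma$-translates are incorporated --- is compatible with a fixed window $F$ and tolerance $\delta$ in the definition of $\AP$. The delicate point is forcing the Shannon-type error terms to vanish after dividing by $d_i$ and passing to $\limsup$, without the alphabet sizes $|A|,|B|$ inflating the estimate; this is exactly where Stirling's approximation and the $\mu$-density of $\mathcal{S}$ must be balanced against each other.
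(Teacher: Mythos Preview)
The paper does not prove this statement; it is quoted as ``the following result of Kerr'' and attributed to \cite{KerrPartition}, with no argument supplied. There is therefore no in-paper proof to compare your proposal against.

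That said, your outline is essentially the strategy Kerr uses in \cite{KerrPartition}: reduce to showing independence of $h_{\Sigma,\mu}(\mathcal{S})$ from the generating subalgebra, establish monotonicity of $h_{\Sigma,\mu}(\alpha;\beta)$ in the refinement $\beta$, and then approximate arbitrary $\mathcal{M}$-observables by observables built from $\Gamma$-translates of $\mathcal{S}$-sets, controlling the error via a Stirling-type count. Your identification of the delicate point is accurate: since $\mathcal{S}$ is only assumed to generate \emph{dynamically}, the approximating observable $\beta_0$ is not literally $\mathcal{S}$-measurable but rather measurable with respect to $\bigvee_{g\in E}g\mathcal{S}$ for some finite $E\subseteq\Gamma$, and one must absorb this window $E$ into the parameter $F$ appearing in $\AP(\beta,F,\delta,\sigma_i)$. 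This is handled in Kerr's argument by enlarging $F$ and using the asymptotic multiplicativity of the sofic approximation to match $\sigma_i(gh)$ with $\sigma_i(g)\sigma_i(h)$ up to small measure, exactly as you anticipate. The alphabet-size inflation you worry about is harmless because the continuity error is $|B|\varepsilon\log|B|+H(\varepsilon)$-type with $|B|$ fixed before $\varepsilon$ is sent to zero.
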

Additionally, one can show that
\[h_{\Sigma,\mu}(\alpha;\mathcal{S})\]
is independent of $\mathcal{S}$ if $\mathcal{S}$ generates $\mathcal{M}$ (up to sets of measure zero). In this case, we set
\[h_{\Sigma,\mu}(\alpha;\mathcal{S})=h_{\Sigma,\mu}(\alpha;X,\Gamma).\]

We now proceed to prove that our definition of sofic entropy with respect to a Polish model recovers  measure-theoretic entropy with respect to a sofic approximation. Let us briefly outline the proof. First we show that for any dynamically generating pseudometric $\Delta$ on $X,$  there is a compatible metric $\Delta'$ so that
\[h_{\Sigma,\mu}(\Delta)=h_{\Sigma,\mu}(\Delta')\]
(see Lemma \ref{L:switching}). Thus we may assume that our metrics our compatible. We then use Kerr's version of measure entropy, using the subalgebra of sets which from the point of view of the measure ``appear" to be open and closed (in a sense to be made precise later). We then show that both the topological version and the observable version of microstates produce roughly the same space (see Lemma \ref{L:COstuffman}).  The essential fact for proving the last step will be tightness of a single probability measure on a Polish space. The theorem will follow without too much difficulty from these preliminary lemmas. The following proof is a minor modification of the argument in Lemma 4.4 of \cite{Li}, as well as Lemma 6.12 of \cite{BowenGroupoid}. We have decided to include the proof to alleviate any concerns that may arise from working in the noncompact case, as well as address the necessary modifications that occur in the definition of a dynamically generating pseudometric in the Polish case.
\begin{lemma}\label{L:switching} Let $\Gamma$ be a countable discrete sofic group with sofic approximation $\Sigma=(\sigma_{i}\colon \Gamma\to S_{d_{i}}).$ Let $X$ be a Polish space with $\Gamma\actson X$ by homeomorphisms and $\mu$ a $\Gamma$-invariant probability measure. Given a dynamically generating pseudometric $\Delta$ on $X,$ there is a bounded compatible metric $\Delta'$ on $X$ so that
\[h_{\Sigma,\mu}(\Delta)=h_{\Sigma,\mu}(\Delta').\]
\end{lemma}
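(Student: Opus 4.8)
The plan is to build $\Delta'$ explicitly from $\Delta$ and then show that the two pseudometrics compute the same entropy by comparing their microstate spaces and the associated separated numbers. Fix an enumeration $\Gamma=\{g_1,g_2,\dots\}$ with $g_1=e$ and set
\[\Delta'(x,y)=\sum_{n=1}^{\infty}2^{-n}\Delta(g_n x,g_n y).\]
Since $\Delta$ is bounded by $M=\diam(X,\Delta)$ the series converges uniformly, so $\Delta'$ is a bounded continuous pseudometric with $\Delta'\le M$. I would first verify that $\Delta'$ is a compatible metric. It separates points: if $\Delta'(x,y)=0$ then $\Delta(gx,gy)=0$ for all $g$, and the dynamically generating hypothesis forces $y$ into every neighborhood of $x$, hence $x=y$ since $X$ is Hausdorff. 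Continuity of each summand shows the $\Delta'$-balls are open in $X$, so the $\Delta'$-topology is coarser than the original one; conversely, given $x$ and a neighborhood $U$, the dynamically generating property supplies $\delta>0$ and finite $F$ with $\{y:\max_{g\in F}\Delta(gx,gy)<\delta\}\subseteq U$, and for $N$ with $F\subseteq\{g_1,\dots,g_N\}$ any $y$ with $\Delta'(x,y)<2^{-N}\delta$ lies in $U$. Thus $\Delta'$ generates the topology of $X$.

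The substance is the equality $h_{\Sigma,\mu}(\Delta)=h_{\Sigma,\mu}(\Delta')$, for which I would establish two comparisons between $\Delta_2$ and $\Delta'_2$. Globally, $g_1=e$ gives $\Delta\le 2\Delta'$, hence $\Delta_2\le 2\Delta'_2$ on every $X^{d}$; this immediately yields $N_\varepsilon(A,\Delta_2)\le N_{\varepsilon/2}(A,\Delta'_2)$ and the inclusion $\Map(\Delta',F,\delta,\sigma)\subseteq\Map(\Delta,F,2\delta,\sigma)$. No global reverse bound is available — $\Delta'$ involves infinitely many translates and the action need not be isometric or uniformly continuous for $\Delta$ — so I would prove a reverse comparison valid only on microstate spaces. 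For $\phi,\psi\in\Map(\Delta,F,\delta,\sigma)$ with $\{g_1,\dots,g_N\}\subseteq F$, the microstate condition lets me replace $g_n\phi(j)$ by $\phi(\sigma(g_n)j)$ up to $\Delta_2$-error $\delta$, and since $\sigma(g_n)\in S_d$ is a bijection the reindexing identity $\Delta_2(\phi\circ\sigma(g_n),\psi\circ\sigma(g_n))=\Delta_2(\phi,\psi)$ holds; truncating the tail of the series by $M2^{-N}$ then gives $\Delta'_2(\phi,\psi)\le\Delta_2(\phi,\psi)+2\delta+M2^{-N}$.

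I would also need that a $\Delta$-microstate is an approximate $\Delta'$-microstate. Here the triangle inequality through the point $\phi(\sigma(g_n g)j)$, the microstate conditions for $g_n$ and $g_n g$, and the asymptotic multiplicativity $\sigma_i(g_n)\sigma_i(g)=\sigma_i(g_n g)$ off a set of vanishing proportion together show $\Delta'_2(\phi\circ\sigma_i(g),g\phi)\le 2\delta+M2^{-N}+o(1)$ as $i\to\infty$; choosing $F\supseteq\{g_n,g_n g:n\le N\}$, $\delta$ small and $N$ large yields $\Map_\mu(\Delta,F,\delta,L,\sigma_i)\subseteq\Map_\mu(\Delta',F',\delta',L,\sigma_i)$ for $i$ large. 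The measure constraint $U_{L,\delta}(\mu)$ is metric-free, so the finite set $L\subseteq C_b(X)$ is carried through all inclusions unchanged.

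Finally I would assemble these. Using the forward inclusion together with $N_\varepsilon(A,\Delta_2)\le N_{\varepsilon/2}(A,\Delta'_2)$, monotonicity of $N_r$ in the set, and monotonicity of the entropy functional in $(F,\delta,L)$, one gets $h_{\Sigma,\mu}(\Delta,\varepsilon)\le h_{\Sigma,\mu}(\Delta',\varepsilon/2)$, and taking $\sup_\varepsilon$ gives $h_{\Sigma,\mu}(\Delta)\le h_{\Sigma,\mu}(\Delta')$. For the reverse, the inclusion $\Map(\Delta',F,\delta/2,\sigma)\subseteq\Map(\Delta,F,\delta,\sigma)$ and the microstate bound $\Delta'_2\le\Delta_2+2\delta+M2^{-N}$ give $N_\varepsilon(\cdot,\Delta'_2)\le N_{\varepsilon-2\delta-M2^{-N}}(\cdot,\Delta_2)$; for any $\tilde\varepsilon<\varepsilon$, choosing $\delta,N$ with $2\delta+M2^{-N}<\varepsilon-\tilde\varepsilon$ and then letting $F$ grow and $\delta$ shrink yields $h_{\Sigma,\mu}(\Delta',\varepsilon)\le h_{\Sigma,\mu}(\Delta,\tilde\varepsilon)\le h_{\Sigma,\mu}(\Delta)$, so $h_{\Sigma,\mu}(\Delta')\le h_{\Sigma,\mu}(\Delta)$. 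The main obstacle throughout is exactly that the reverse metric comparison holds only on microstates, so the translates $g_n$ appearing in $\Delta'$ must be transferred onto the permutations $\sigma_i(g_n)$ — where they become measure-preserving — via approximate equivariance; keeping the finite sets, the series tail, and the sofic error $o(1)$ all compatible with the nested infima and suprema defining entropy is the delicate bookkeeping.
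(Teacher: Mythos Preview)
Your proposal is correct and follows essentially the same approach as the paper: you define $\Delta'$ as a convergent weighted sum of translates of $\Delta$ (the paper uses general weights $\alpha_g$ with $\alpha_e\ge 1/2$, you use $2^{-n}$), verify compatibility via the dynamically generating hypothesis, and prove both entropy inequalities by comparing $\Delta_2$ and $\Delta'_2$ on microstate spaces using approximate equivariance, the reindexing identity $\Delta_2(\phi\circ\sigma(g),\psi\circ\sigma(g))=\Delta_2(\phi,\psi)$, tail truncation, and asymptotic multiplicativity of the sofic approximation. The only cosmetic differences are that the paper phrases Step~1 via an embedding into a product space $Y^\Gamma$ and routes some estimates through spanning numbers $S_\varepsilon$ rather than separated numbers $N_\varepsilon$.
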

\begin{proof} Let $M$ be the diameter of $(X,\Delta).$ Since $\Gamma$ is countable, we may find positive real numbers $\{\alpha_{g}:g\in\Gamma\}$  with $\alpha_{e}\geq 1/2$ and
\[\sum_{g}\alpha_{g}=1.\]
Set
\[\Delta'(x,y)=\sum_{g\in\Gamma}\alpha_{g}\Delta(gx,gy).\]
We prove the lemma with this $\Delta'.$ The lemma is proved in several steps.

\emph{Step 1: We show that $\Delta'$ is a compatible metric.} For this let $Y$ be $X$ modded out by the equivalence relation $a\thicksim b$ if $\Delta(a,b)=0.$ For $a\in X,$ let $[a]$ be the equivalence class of $a.$ Make $Y$ a metric space with metric $\overline{\Delta}$ given by
\[\overline{\Delta}([a],[b])=\Delta(a,b).\]
Observe that $\overline{\Delta}$ is well-defined because $\Delta$ satisfies  the triangle inequality. Then $Y^{\Gamma}$  has a compatible metric given by
\[\Delta_{\Gamma}(a,b)=\sum_{g\in\Gamma}\alpha_{g}\overline{\Delta}(a(g),b(g)).\]
Moreover we have an injective map $\Phi\colon X\to Y^{\Gamma}$ by
\[\Phi(x)(g)=[gx],\]
as
\[\Delta_{\Gamma}(\Phi(x),\Phi(y))=\Delta'(x,y),\]
it is enough to show that $\Phi$ is a homeomorphism onto its image. It is clear that $\Phi$ is continuous. Suppose that $x\in X$ and $U$ is a neighborhood of $x\in X.$ By the definition of dynamically generating, we may choose a finite $F\subseteq\Gamma$ and a $\delta>0$ so that if $y\in X$ and
\[\max_{g\in F}\Delta(gx,gy)<\delta,\]
then $y\in U.$  If we let
\[V=\{y\in Y^{\Gamma}:\overline{\Delta}([gx],y(g))<\delta\mbox{ for all $g\in F$}\},\]
then $V$ is a neighborhood of $\Phi(x)$ in $Y^{\Gamma}$ and if $y\in X$ has $\Phi(y)\in V,$ then $y\in U.$ Thus $\Phi$ is a homeomorphism onto its image.

\emph{Step 2: We show that $h_{\Sigma,\mu}(\Delta')\leq h_{\Sigma,\mu}(\Delta).$}
Let $\varepsilon>0$ and choose a finite $E\subseteq\Gamma$ sufficiently large so that
\[\sum_{g\in\Gamma\setminus E}\alpha_{g}<\varepsilon.\]
 Let $F\subseteq\Gamma$ finite, $\delta>0$ and $L\subseteq C_{b}(X)$ finite be given. We will assume that $F\supseteq E.$
 We will choose $\delta$ sufficiently small in a manner depending upon $\varepsilon$ to be determined later. Since $\alpha_{e}\geq 1/2$ we have
\[\Delta(x,y)\leq 2\Delta'(x,y).\]
So by Minkowski's inequality
\[\Map_{\mu}(\Delta',F,\delta,L,\sigma_{i})\subseteq \Map_{\mu}(\Delta,F,4\delta,L,\sigma_{i}).\]
For $\phi,\psi\in\Map_{\mu}(\Delta,F,2\delta,L,\sigma_{i})$ we have by Minkowski's inequality,
\begin{align*}
\Delta'_{2}(\phi,\psi)&\leq \sum_{g\in\Gamma}\alpha_{g}\Delta_{2}(g\phi,g\psi)\\
&\leq \varepsilon M+\sum_{g\in E}\alpha_{g}\Delta_{2}(g\phi,g\psi)\\
&\leq \varepsilon M+2\delta+\sum_{g\in E}\alpha_{g}\Delta_{2}(\phi\circ \sigma_{i}(g),\Delta\circ \sigma_{i}(g))\\
&\leq \varepsilon M+2\delta+\Delta_{2}(\phi,\psi)
\end{align*}
where in the last line we use that $\Delta_{2}(\phi\circ \sigma_{i}(g),\psi\circ\sigma_{i}(g))=\Delta_{2}(\phi,\psi).$  Thus for all sufficiently small $\delta$
\[S_{3\varepsilon(M+2)}(\Map_{\mu}(\Delta',F,\delta,L,\sigma_{i}),\Delta_{2}')\leq S_{\varepsilon}(\Map_{\mu}(\Delta,F,4\delta,L,\sigma_{i})).\]
Thus by (\ref{E:separationspanning})  we have
\[h_{\Sigma,\mu}(\Delta',6\varepsilon(M+2))\leq h_{\Sigma,\mu}(\Delta,\varepsilon,F,4\delta,L).\]
As $h_{\Sigma,\mu}(\Delta,\varepsilon,F,4\delta,L)$ is monotone in $(F,4\delta,L)$ we let $F,L$ increase to $\Gamma,C_{b}(X)$ and take $\delta\to 0$ to find that
\[h_{\Sigma,\mu}(\Delta',6\varepsilon(M+2))\leq h_{\Sigma,\mu}(\Delta',\varepsilon)\leq h_{\Sigma,\mu}(\Delta).\]
Now letting $\varepsilon\to 0$ completes the proof of Step 2.

\emph{Step 3: We show that $h_{\Sigma,\mu}(\Delta)\leq h_{\Sigma,\mu}(\Delta')$.} Let $\varepsilon>0.$ Suppose we are given finite $F'\subseteq \Gamma,L'\subseteq C_{b}(X)$ and $\delta'>0.$ Set $L=L'$ and let $F\subseteq \Gamma$ be a sufficiently large finite set depending upon $F',\delta'$ in a manner to determined later and set $\delta=\delta'$. Choose a finite $E\subseteq \Gamma$ so that
\[\sum_{g\in\Gamma\setminus E}\alpha_{g}<\delta'.\]
If $\phi\in\Map_{\mu}(\Gamma,F,\delta,L,\sigma_{i})$ and $h\in F',$  we have by Minkowski's inequality
\begin{align*}
\Delta_{2}'(h\phi,\phi\circ\sigma_{i}(h))&\leq\sum_{g\in\Gamma}\alpha_{g}\Delta_{2}(gh\phi,g\phi\circ\sigma_{i}(h))\\
&\leq \delta'+\sum_{g\in F}\alpha_{g}\Delta_{2}(gh\phi,\phi\circ\sigma_{i}(gh))+\sum_{g\in F}\alpha_{g}\Delta_{2}(\phi\circ\sigma_{i}(gh),\phi\circ\sigma_{i}(g)\sigma_{i}(h))\\
&+\sum_{g\in F}\alpha_{g}\Delta_{2}(\phi\circ\sigma_{i}(g)\sigma_{i}(h),g\phi\circ\sigma_{i}(h))\\
&=\delta'+\sum_{g\in F}\alpha_{g}\Delta_{2}(gh\phi,\phi\circ\sigma_{i}(gh))+\sum_{g\in F}\alpha_{g}\Delta_{2}(\phi\circ\sigma_{i}(gh),\phi\circ\sigma_{i}(g)\sigma_{i}(h))\\
&+\sum_{g\in F}\alpha_{g}\Delta_{2}(\phi\circ \sigma_{i}(g),g\phi)\\
&<\delta'+2\delta+\sum_{g\in E}\alpha_{g}\Delta_{2}(\phi\circ \sigma_{i}(g)\sigma_{i}(h),\phi\circ\sigma_{i}(gh)),
\end{align*}
if we force $F\supseteq EF'\cup E.$  Set $\delta=\delta',$ as
\[\Delta_{2}(\phi\circ \sigma_{i}(g)\sigma_{i}(h),\phi\circ\sigma_{i}(gh))^{2}\leq Mu_{d_{i}}(\{1\leq j\leq d_{i}:\sigma_{i}(g)\sigma_{i}(h)(j)\ne \sigma_{i}(gh)(j)\})\to 0\]
for $g,h\in\Gamma$ we find that
\[\Map_{\mu}(\Delta,F,\delta,L,\sigma_{i})\subseteq\Map_{\mu}(\Delta',F',4\delta',\sigma_{i}),\]
for all large $i.$ As
\[\Delta_{2}(\phi,\psi)\leq 2\Delta'_{2}(\phi,\psi),\]
we find that
\[h_{\Sigma,\mu}(\Delta,6\varepsilon)\leq h_{\Sigma,\mu}(\Delta,6\varepsilon,F,\delta,L)\leq h_{\Sigma,\mu}(\Delta',\varepsilon,F',\delta',L').\]
Taking the infimum over $F',\delta',L'$ we find that
\[h_{\Sigma,\mu}(\Delta,6\varepsilon)\leq h_{\Sigma,\mu}(\Delta',\varepsilon)\leq h_{\Sigma,\mu}(\Delta').\]
Letting $\varepsilon\to 0$ proves Step 3.

\end{proof}

To prove the theorem, we need to single out a nice subalgebra of measurable sets. Let $X$ be a Polish space and $\mu$ a Borel probability measure on $X.$ We let $\CO_{\mu}$ be the set of Borel sets $E$ so that
\[\mu(\Int E)=\mu(\overline{E}).\]
Note that $\CO_{\mu}$ is an algebra of sets. These sets are often called continuity sets in the literature.

For the next lemma we need some notation. Given a metric space $(X,\Delta),$ $E\subseteq X$ and $\varepsilon>0$ we let
\[E_{\varepsilon}=\{x\in E:\Delta(x,E^{c})\geq \varepsilon\}\]
\[\mathcal{O}_{\varepsilon}(E)=\{x\in X:\Delta(x,E)<\varepsilon\}.\]
\begin{lemma}\label{L:CO} Let $X$ be a Polish space and $\mu$ a Borel probability measure on $X.$ Let $\Delta$ be a compatible metric on $X.$  Given $E\in \CO_{\mu},$  and $\eta>0$ there is a neighborhood $U$ of $\mu$ in the weak topology and a $\kappa>0$ so that if $\nu\in U\cap \Prob(X)$ then
\[|\nu(E)-\mu(E)|<\eta\]
\[\nu(\mathcal{O}_{\kappa}(E)\setminus E_{\kappa})<\eta.\]
\end{lemma}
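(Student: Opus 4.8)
The plan is to exploit the Portmanteau theorem together with the continuity-set hypothesis $\mu(\Int E)=\mu(\overline E)$, handling the two desired inequalities by slightly different means. For the first inequality, I would recall that since $E\in\CO_\mu$ its topological boundary $\del E=\overline E\setminus\Int E$ has $\mu(\del E)=0$. By the Portmanteau characterization of weak convergence, the map $\nu\mapsto \nu(E)$ is continuous at $\mu$ for any $\mu$-continuity set $E$; concretely, there is a weak-neighborhood $U_1$ of $\mu$ so that $|\nu(E)-\mu(E)|<\eta$ for all $\nu\in U_1\cap\Prob(X)$. The one point requiring care is that the paper's weak-neighborhood basis $U_{L,\delta}(\mu)$ is defined using bounded continuous test functions only, so I would produce the estimate by sandwiching $\mathbf 1_E$ between two functions in $C_b(X)$: choose $f_-\leq \mathbf 1_E\leq f_+$ with $f_\pm\in C_b(X)$, $0\leq f_\pm\leq 1$, supported appropriately near $\overline E$ and $\Int E$, and with $\int(f_+-f_-)\,d\mu$ small (possible precisely because $\mu(\del E)=0$, so one can take $f_-$ vanishing off $\Int E$ and $f_+$ equal to $1$ on $\overline E$ and push their $\mu$-integrals together). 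Then $\{L=\{f_-,f_+\},\delta\}$ small gives the neighborhood $U_1$.

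For the second inequality I would first observe that the sets $\mathcal{O}_\kappa(E)\setminus E_\kappa$ \emph{decrease} as $\kappa\downarrow 0$ and that their intersection over $\kappa>0$ is contained in the boundary $\del E$. Indeed, a point lying in $\mathcal{O}_\kappa(E)\setminus E_\kappa$ for all $\kappa$ is at zero distance from both $E$ and $E^c$, hence in $\overline E\cap\overline{E^c}=\del E$. Since $\mu(\del E)=0$, continuity from above of $\mu$ gives a single $\kappa>0$ with $\mu(\mathcal{O}_\kappa(E)\setminus E_\kappa)<\eta/2$. The remaining task is to upgrade this $\mu$-estimate to a uniform estimate over $\nu$ in a neighborhood. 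Here I would fix such a $\kappa$ and then apply the first part of the argument to the set $\mathcal{O}_\kappa(E)\setminus E_\kappa$; but this set need not itself be a $\mu$-continuity set, so instead I would bound its measure from above by a single function $f\in C_b(X)$ with $\mathbf 1_{\mathcal{O}_\kappa(E)\setminus E_\kappa}\leq f$ and $\int f\,d\mu<\eta/2$. Such an $f$ exists because $\mathcal{O}_\kappa(E)\setminus E_\kappa$ is contained in the open set $\mathcal{O}_\kappa(E)\setminus \overline{E_{2\kappa}}$ (or any slightly larger open set) of $\mu$-measure $<\eta/2$, and every open set of small measure can be under-approximated in integral by a continuous $0\leq f\leq 1$ via Urysohn. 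Then the neighborhood $U_2=U_{\{f\},\eta/2}(\mu)$ forces $\nu(\mathcal{O}_\kappa(E)\setminus E_\kappa)\leq\int f\,d\nu<\int f\,d\mu+\eta/2<\eta$.

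Finally I would set $U=U_1\cap U_2$ and retain the $\kappa$ chosen above; this $U$ and $\kappa$ simultaneously deliver both conclusions. I expect the main obstacle to be the second inequality, specifically the need to obtain a neighborhood that is \emph{uniform} in $\nu$: weak convergence only controls integrals of fixed continuous functions, so one cannot directly use $\nu(\mathcal{O}_\kappa(E)\setminus E_\kappa)$, and the trick is to absorb the non-continuity-set shell into the integral of a single fixed $C_b$ function dominating its indicator. Once $\kappa$ is frozen before choosing the neighborhood, this domination argument is routine, and the interplay $E_\kappa\subseteq E\subseteq\mathcal{O}_\kappa(E)$ with $\mu(\del E)=0$ makes all the measures converge as required.
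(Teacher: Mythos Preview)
Your proposal is correct and follows essentially the same route as the paper. Both arguments invoke the Portmanteau theorem for the first inequality, and both handle the second inequality by first shrinking $\kappa$ so that the ``shell'' $\mathcal{O}_\kappa(E)\setminus E_\kappa$ (or its $2\kappa$-enlargement) has small $\mu$-measure, then choosing a weak neighborhood that transfers this to $\nu$. The paper phrases the transfer via the closed-set half of Portmanteau applied to $\overline{\mathcal{O}_{2\kappa}(E)\setminus E_{2\kappa}}$, whereas you phrase it via a dominating $C_b$ function; these are two sides of the same coin.

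One small wording issue: your claim that ``every open set of small measure can be under-approximated in integral by a continuous $0\le f\le1$ via Urysohn'' is not quite what you need --- you need $f\geq\mathbf 1_A$ with $\int f\,d\mu$ small, which requires not just that $A$ sits in an open set of small measure but that $\overline{A}$ does. This is exactly why the $2\kappa$ enlargement (which you mention parenthetically) is necessary: one checks $\overline{\mathcal{O}_\kappa(E)\setminus E_\kappa}\subseteq \mathcal{O}_{2\kappa}(E)\setminus E_{2\kappa}$, chooses $\kappa$ so the latter has $\mu$-measure $<\eta/2$, and then Urysohn (or the explicit $f(x)=g(d(x,E))\cdot g(d(x,E^c))$ with $g$ a suitable cutoff) gives the desired $f$. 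With that clarification the argument is complete.
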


\begin{proof}

It is a consequence of the Portmanteau Theorem that a sequence of probability measure $\mu_{n}$ on $X$ converges weakly to $\mu$ if and only if $\mu_{n}(A)\to \mu(A)$ for every continuity set $A$ of $\mu.$ Thus we can choose a neighborhood $U_{1}$ of $\mu$ so that
\[|\nu(E)-\mu(E)|<\eta\]
for all $\nu\in U_{1}.$  To obtain the second estimate, again by the Portmanteau Theorem we can choose a neighborhood $U_{2}$ of $\mu$ so that
\[\nu(\overline{\mathcal{O}_{2\kappa}(E)\setminus E_{2\kappa}})\leq \mu(\overline{\mathcal{O}_{2\kappa}(E)\setminus E_{2\kappa}})+\frac{\eta}{2}\]
for all $\nu\in U_{2}.$  We have $\mathcal{O}_{\kappa}(E)\setminus E_{\kappa}\subseteq \overline{\mathcal{O}_{2\kappa}(E)\setminus E_{2\kappa}},$ so
\[\nu(\mathcal{O}_{\kappa}(E)\setminus E_{\kappa})\leq\mu(\overline{\mathcal{O}_{2\kappa}(E)\setminus E_{2\kappa}})+\frac{\eta}{2}.\]
Since $E$ is a continuity set of $\mu$ we can choose $\kappa$ small enough so that
\[\mu(\overline{\mathcal{O}_{2\kappa}(E)\setminus E_{2\kappa}})<\frac{\eta}{2}.\]
The lemma is now completed by setting $U=U_{1}\cap U_{2}.$

\end{proof}

Given $\sigma\colon\Gamma\to S_{d}$ for some $d\in\NN,$ and $\phi\in A^{d}$ we shall define $\phi^{F}\colon\{1,\dots,d\}\to A^{F}$ by'
\[\phi_{\sigma}^{F}(j)(g)=\phi(\sigma(g)^{-1}(j)), \mbox{ for $g\in F$}\]

\begin{lemma}\label{L:COstuffman} Let $\Gamma$ be a countable discrete sofic group with sofic approximation $\Sigma=(\sigma_{i}\colon\Gamma\to S_{d_{i}}).$ Let $X$ be a Polish space with $\Gamma\actson X$ by homeomorphisms and $\mu$ a $\Gamma$-invariant Borel probability measure on $X.$ Let $\Delta$ be a bounded compatible metric on $X.$

(i): Let $\beta\colon X\to B$ a finite $\CO_{\mu}$-measurable observable. Given $F\subseteq\Gamma$ finite, $\delta>0$ there are finite $F'\subseteq\Gamma,L'\subseteq C_{b}(X),$ and $\delta'>0$ so that if $\phi\in\Map_{\mu}(\Delta,F',\delta',L',\sigma_{i})$ then $(\beta\circ\phi)_{\sigma_{i}}^{F}\in \AP(\beta,F,\delta,\sigma_{i}).$

(ii): Given finite $F'\subseteq\Gamma,L'\subseteq C_{b}(X)$ and $\delta'>0,$ there are finite $F\subseteq\Gamma$ and a $\delta>0$ and a finite $\CO_{\mu}$-measurable observable $\beta\colon X\to B$ so that if $\widetilde{\beta}^{F}\circ\phi\in\AP(\beta,F,\delta,\sigma_{i})$ then $\phi\in\Map_{\mu}(\Delta,F',\delta',L',\sigma_{i}).$

\end{lemma}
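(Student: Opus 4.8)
The plan is to translate, in both directions, between the geometric record of a microstate (its $\Delta_2$-distances) and the combinatorial record given by the $B^F$-valued observable $\widetilde\beta^F$. Two ingredients make this possible. First, when $\beta$ is $\CO_\mu$-measurable every level set $\beta^{-1}(\{b_0\})$, and hence---since $\mu$ is $\Gamma$-invariant and $\CO_\mu$ is an algebra---every set $(\widetilde\beta^F)^{-1}(\{b\})=\bigcap_{g\in F}g\,\beta^{-1}(\{b(g)\})$, is a $\mu$-continuity set, so Lemma~\ref{L:CO} controls both its mass and the mass of its metric collar under any measure weakly close to $\mu$. Second, on a compact set of nearly full measure the finitely many homeomorphisms $h\in F'$ are uniformly continuous, which is what lets one compare $\Delta$ after applying group elements. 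Throughout I would assume $\sigma_i(e)=\id$ (so that $\widetilde\beta^F(\cdot)(e)=\beta(\cdot)$) and, for part (i) only, the sofic estimates $\sigma_i(g)^{-1}\approx\sigma_i(g^{-1})$ and $\sigma_i(g)\sigma_i(g^{-1})\approx\id$, valid off a proportion of $j$ tending to $0$.

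For (i) I would set $\psi=(\beta\circ\phi)^{F}_{\sigma_i}$, so that $\psi(j)(g)=\beta(\phi(\sigma_i(g)^{-1}(j)))$. The coherence clause of $\AP(\beta,F,\delta,\sigma_i)$ then holds for $\psi$ by construction, since $\psi(\sigma_i(g)^{-1}(j))(e)=\beta(\phi(\sigma_i(g)^{-1}(j)))=\psi(j)(g)$. The remaining task is to match the empirical law of $\psi$ with the $\mu$-law of $\widetilde\beta^F$, and I would do this through the intermediate map $j\mapsto\widetilde\beta^F(\phi(j))=(\beta(g^{-1}\phi(j)))_{g\in F}$. Its law is $\phi_*(u_{d_i})$ evaluated on the continuity sets $(\widetilde\beta^F)^{-1}(\{b\})$, so choosing $L'$ and $\delta'$ by applying Lemma~\ref{L:CO} to these finitely many sets forces it within $\delta/2$ of the $\mu$-law in total variation. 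To compare $\psi$ with this intermediate map coordinatewise, note that once $F'\supseteq\{g^{-1}:g\in F\}$ and $i$ is large the points $\phi(\sigma_i(g)^{-1}(j))$ and $g^{-1}\phi(j)$ are $\Delta_2$-close: the mapping condition gives $\Delta_2(\phi\circ\sigma_i(g^{-1}),g^{-1}\phi)<\delta'$, and the sofic swap $\sigma_i(g)^{-1}\approx\sigma_i(g^{-1})$ absorbs the discrepancy. Because $\beta$ is merely measurable, $\Delta$-closeness yields $\beta(\phi(\sigma_i(g)^{-1}(j)))=\beta(g^{-1}\phi(j))$ only away from the metric collar of the level sets of $\beta$; a Markov estimate on $\Delta_2$ bounds the proportion of $j$ where the two points are more than $\kappa$ apart, and the collar bound of Lemma~\ref{L:CO} applied to the sets $\beta^{-1}(\{b_0\})$ (using that the collar shrinks as $\kappa\downarrow 0$) bounds the proportion of $j$ with $\phi(\sigma_i(g)^{-1}(j))$ in the collar. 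Summing over $g\in F$ makes $\psi$ and the intermediate map agree off a proportion $<\delta/4$ of $j$, so their empirical laws differ by $<\delta/2$, and the triangle inequality puts $\psi\in\AP(\beta,F,\delta,\sigma_i)$ for all large $i$.

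For (ii) I would build $\beta$ geometrically. Set $D(x,y)=\max\bigl(\Delta(x,y),\max_{h\in F'}\Delta(hx,hy)\bigr)$, a bounded metric still compatible with the topology since each $h$ is a homeomorphism. Using tightness pick a compact $K$ with $\mu(K)>1-\varepsilon$, cover it by finitely many $D$-balls of radius $\rho$ chosen among the cocountably many values giving $\mu$-continuity sets, and disjointify (legitimate as $\CO_\mu$ is an algebra) to obtain ``good'' cells of $D$-diameter $<2\rho$ and a single ``garbage'' cell of measure $<\varepsilon$; let $\beta$ be the resulting observable and take $F=F'\cup\{e\}$. The pushforward condition comes from the $e$-marginal: the first clause of $\AP(\beta,F,\delta,\sigma_i)$ forces the empirical law of $\beta\circ\phi$ within $\delta$ of the $\mu$-law, and taking $\rho$ small enough (uniform continuity on $K$) that each $f\in L'$ oscillates by less than $\delta'/10$ on good cells converts this into $\bigl|\frac{1}{d_i}\sum_j f(\phi(j))-\int f\,d\mu\bigr|<\delta'$. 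For the mapping condition I would reindex the coherence clause by $j=\sigma_i(h)(k)$, an exact bijection requiring no soficity, to get $\beta(h^{-1}\phi(\sigma_i(h)(j)))=\beta(\phi(j))$ for a proportion $>1-\delta$ of $j$ and each $h\in F'$. When this holds and $\beta(\phi(j))$ is a good cell, $h^{-1}\phi(\sigma_i(h)(j))$ and $\phi(j)$ lie in one cell of $D$-diameter $<2\rho$, so $\Delta(\phi(\sigma_i(h)(j)),h\phi(j))\le 2\rho$ directly from the definition of $D$; splitting $\Delta_2(\phi\circ\sigma_i(h),h\phi)^2$ over this good set, the coherence-failure set (proportion $<\delta$), and the garbage set (proportion $<\varepsilon+\delta$ by the marginal) bounds it by $(2\rho)^2+(\varepsilon+2\delta)M^2$, which is below $\delta'^2$ for suitable $\rho,\varepsilon,\delta$.

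The main obstacle in both parts is that $\Delta$ is not $\Gamma$-invariant, so the mapping condition cannot simply be transported along the action. In (ii) this is handled cleanly by folding the action into the metric $D$, so that smallness of a cell automatically controls all the needed translates. In (i) there is no partition to pre-arrange, and the crux is the passage from $\Delta$-closeness of two points to equality of the discontinuous observable $\beta$; this is exactly where the continuity-set hypothesis and the collar estimate of Lemma~\ref{L:CO} are indispensable, and it is also the only place a genuine ``for all large $i$'' enters, via the sofic identity $\sigma_i(g)^{-1}\approx\sigma_i(g^{-1})$.
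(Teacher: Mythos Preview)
Your proposal is correct and for part (i) follows essentially the same route as the paper: the coherence clause is automatic, and the statistics clause is obtained by passing through the intermediate map $j\mapsto\widetilde\beta^F(\phi(j))$ and using the collar estimate of Lemma~\ref{L:CO} to convert $\Delta$-closeness into $\beta$-agreement. The paper's own proof of (i) is rather terse at this step (it simply asserts that ``if we choose $\eta$ sufficiently small, then we have forced'' the statistics condition), and your write-up makes explicit the Markov bound and the use of the sofic swap $\sigma_i(g)^{-1}\approx\sigma_i(g^{-1})$ that are implicitly needed.

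For part (ii) you take a genuinely different route. The paper covers $K$ by $\Delta$-balls, takes $F=F'\cup\{e\}\cup(F')^{-1}$, and then uses the coherence clause at $g^{-1}$ together with the sofic identity $\sigma_i(g^{-1})^{-1}\approx\sigma_i(g)$ to conclude that $g\phi(j)$ and $\phi(\sigma_i(g)(j))$ lie in the same small $\Delta$-ball. Your device of building the metric $D(x,y)=\max\bigl(\Delta(x,y),\max_{h\in F'}\Delta(hx,hy)\bigr)$ and covering $K$ by $D$-balls is cleaner: after the exact reindexing $j=\sigma_i(h)(k)$ you get $\beta(h^{-1}\phi(\sigma_i(h)(j)))=\beta(\phi(j))$ directly, and small $D$-diameter of a cell immediately gives $\Delta(\phi(\sigma_i(h)(j)),h\phi(j))$ small. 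This avoids enlarging $F$ to include inverses and eliminates the one place in the paper's proof of (ii) where a ``for all large $i$'' is needed. One small imprecision: ``uniform continuity on $K$'' does not by itself control the oscillation of $f\in L'$ on a ball that extends beyond $K$; as in the paper, you should allow the radii to vary with the center (continuity of $f$ at each $x\in K$ gives a radius $\rho_x$ with oscillation $<\delta'/10$ on $B_D(x,\rho_x)$, and compactness extracts a finite subcover), rather than using a single $\rho$.
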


\begin{proof}

(i): Let $\eta>0$ be sufficiently small depending upon $\delta$ in a manner to be determined later. By the preceding lemma, we may find a $L'\subseteq C_{b}(X)$ finite, and a $\delta'>0$ so that if $\nu\in\Prob(X)$ and
\[\left|\int_{X}f\,d\mu-\int_{X}f\,d\nu\right|<\delta'\mbox{ for all $f\in L'$}\]
then for all $E\subseteq F$  and for all $b\in B^{E},$
\[|\mu((\widetilde{\beta}^{E})^{-1}(\{b\}))-\nu((\widetilde{\beta}^{E})^{-1}(\{b\}))|<\eta,\]
\[\nu(\mathcal{O}_{\sqrt{\delta'}}(\widetilde{\beta}^{E})^{-1}(\{b\}))\setminus (\widetilde{\beta}^{E})^{-1}(\{b\})_{\sqrt{\delta'}})\leq \eta.\]
Set $F'=F.$ Suppose that $\phi\in\Map_{\mu}(\Delta,F',\delta',L',\sigma_{i}).$ Note that if we choose $\eta$ sufficiently small, then we have forced
\[\sum_{b\in B^{F}}|\mu((\widetilde{\beta}^{F})^{-1}(\{b\}))-u_{d_{i}}(((\beta\circ\phi)_{\sigma_{i}}^{F})^{-1}(\{b\}))|<\delta\]
for all $\phi\in \Map(\Delta,F',\delta',L',\sigma_{i}).$  Since $(\beta\circ \phi)^{F}_{\sigma_{i}}(j)(g)=(\beta\circ \phi)^{F}(\sigma_{i}(g)^{-1}(j))(e),$ this means $(\beta\circ \phi)_{\sigma_{i}}^{F}\in\AP(\beta,F,\delta,\sigma_{i}).$

(ii): Let $M>0$ be the diameter of $(X,\Delta).$ Let $\kappa>0$ be sufficiently small depending upon $F',\delta'$ in a manner to be determined later.  Since $X$ is Polish,  Prokhorov's Theorem applied to $\{\mu\}$ implies that we can find a compact set $K\subseteq X$ so that
\[\mu(X\setminus K)\leq \kappa.\]
 Since $K$ is compact, we can find points $x_{1},\dots,x_{n}\in K,$ and numbers $\delta_{j}\in (0,\kappa),j=1,\dots,n$ so that if $B(x,\alpha)$ is the ball in $X$ of radius $\alpha$ with respect to $\Delta$ then
\[\mu(\overline{B(x_{j},\delta_{j})}\setminus B(x_{j},\delta_{j}))=0,\]
\[\sup_{x,y\in B(x_{j},\delta_{j})}|f(x)-f(y)|<\delta'\mbox{ for all $f\in L'$},\]
\[K\subseteq\bigcup_{j=1}^{n}B(x_{j},\delta_{j}).\]
Let
\[E=X\setminus \bigcup_{j=1}^{n}B(x_{j},\delta_{j})\]
and define $\beta\colon X\to \{0,1\}^{n+1}$ by
\[\beta(x)(k)=\begin{cases}
\chi_{B(x_{k},\delta_{k})}(x),&\textnormal{ if $1\leq k\leq n$}\\
\chi_{E}(x),&\textnormal{ if $k=n+1$.}
\end{cases}\]
Note that $\beta$ is $\CO_{\mu}$- measurable. Set $F=F'\cup\{e\}\cup (F')^{-1},$ and let $\delta>0$ be sufficiently small in a manner to depend upon $L',\delta',F'$ to be determined later. We will assume that
\[\delta<\min\{\delta_{j}:j=1,\dots,n\}.\]
 Suppose that $\widetilde{\beta}^{F}\circ\phi\in\AP(\beta,F,\delta,\sigma_{i}).$ Given $g\in F,$ let
\[C_{g}=\{j:\beta(g\phi(j))=\beta(\phi(\sigma_{i}(g)(j)))\}.\]
Since
\[\beta(g\phi(j))=(\widetilde{\beta}^{F}\circ \phi)(j)(g^{-1}),\]
\[\beta(\phi(\sigma_{i}(g)(j)))=(\widetilde{\beta}^{F}\circ \phi)(\sigma_{i}(g))(j)(e),\]
and
\[u_{d_{i}}(\{j:\sigma_{i}(g^{-1})^{-1}(j)=\sigma_{i}(g)(j)\})\to_{i\to\infty}1\]
we see that for all large $i,$
\[u_{d_{i}}(C_{g})\geq 1-2\delta.\]
For $j\in C_{g},$ we necessarily have
\[\Delta(\phi(\sigma_{i}(g)(j)),g\phi(j))<\kappa.\]
Thus
\[\Delta_{2}(\phi\circ \sigma_{i}(g),g\phi)^{2}\leq \kappa^{2}+M^{2}\delta.\]
So if we choose $\kappa<\delta,$ and then $\delta$ sufficiently small, we have forced
\[\Delta_{2}(\phi\circ \sigma_{i}(g),g\phi)<2\delta'.\]
 We will want to force $\kappa,\delta'$ to be even smaller later. Using $\|f\|$ for the uniform norm of $f\in C_{b}(X),$ we have for all $f\in L',$
\begin{align*}
\left|\int_{X}f\,d\mu-\int_{X}f\,d\phi_{*}(u_{d_{i}})\right|&\leq \kappa\|f\|+u_{d_{i}}(\phi^{-1}(E))\|f\|+\sum_{j=1}^{n}\left|\int_{B(x_{j},\delta_{j})}f\,d\mu-\int_{B(x_{j},\delta_{j})}f\,d\phi_{*}(u_{d_{i}})\right|\\
&\leq \kappa\|f\|+2\kappa\|f\|+\delta'+\sum_{j=1}^{n}\left|f(x_{j})\mu(B(x_{j},\delta_{j}))-\frac{1}{d_{i}}\sum_{k:\phi(k)\in B(x_{j},\delta_{j})}f(\phi(k))\right|\\
&\leq \kappa\|f\|+2\kappa\|f\|+\delta'+\delta'\sum_{j=1}^{n}\phi_{*}(u_{d_{i}})(B(x_{j},\delta_{j}))\\
&+\sum_{j=1}^{n}\left|f(x_{j})\mu(B(x_{j},\delta_{j}))-\frac{1}{d_{i}}\sum_{k:\phi(k)\in B(x_{j},\delta_{j})}f(x_{j})\right|\\
&\leq \kappa\|f\|+2\kappa\|f\|+2\delta'+\sum_{j=1}^{n}|f(x_{j})| |\mu(B(x_{j},\delta_{j})-\phi_{*}(u_{d_{i}})(B(x_{j},\delta_{j}))|\\
&\leq \kappa\|f\|+2\kappa\|f\|+2\delta'+n\|f\|\delta.
\end{align*}
We may choose $\kappa<\frac{\delta'}{\|f\|}.$ This forces $n$ on us, but we may then choose $\delta$ sufficiently small so that
$\phi\in\Map_{\mu}(\Delta,F',6\delta',L',\sigma_{i}).$ As $\delta'$ is arbitrary this completes the proof.

\end{proof}

We are now ready to show that our definition of entropy in the case of a Polish model agrees with the usual measure entropy.

\begin{theorem}\label{T:PolishModels} Let $\Gamma$ be a countable discrete sofic group with sofic approximation $\Sigma.$ Let $X$ be a Polish space with $\Gamma\actson X$  by homeomorphisms and $\mu$ a $\Gamma$-invariant, Borel, probability measure on $X.$ For any dynamically generating pseudometric $\Delta$ on $X$  we have
\[h_{\Sigma,\mu}(\Delta)=h_{\Sigma,\mu}(X,\Gamma).\]
\end{theorem}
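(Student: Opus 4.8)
The plan is to reduce the statement to Kerr's partition formulation and then to show that, for a compatible metric, the topological microstate spaces $\Map_\mu$ and the combinatorial approximants $\AP$ have the same exponential growth rate. First I would invoke Lemma \ref{L:switching} to replace the dynamically generating pseudometric $\Delta$ by a bounded \emph{compatible metric} $\Delta'$ with $h_{\Sigma,\mu}(\Delta)=h_{\Sigma,\mu}(\Delta')$; after renaming I assume from now on that $\Delta$ is a bounded compatible metric. Next I would check that the algebra $\CO_\mu$ of $\mu$-continuity sets is generating: since $\Delta$ induces the topology and $X$ is second countable, for a countable dense set of centers and all but countably many radii the balls $B(x,r)$ are continuity sets (their spheres are $\mu$-null), and such balls generate the Borel $\sigma$-algebra. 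Hence Theorem \ref{T:KerrTheorem} gives $h_{\Sigma,\mu}(\CO_\mu)=h_{\Sigma,\mu}(X,\Gamma)$, and it remains to prove $h_{\Sigma,\mu}(\Delta)=h_{\Sigma,\mu}(\CO_\mu)$.

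For the inequality $h_{\Sigma,\mu}(\Delta)\le h_{\Sigma,\mu}(\CO_\mu)$, fix $\varepsilon>0$ and, using Prokhorov's theorem exactly as in the proof of Lemma \ref{L:COstuffman}(ii), build a finite $\CO_\mu$-measurable observable $\alpha\colon X\to A$ whose nontrivial cells are balls of $\Delta$-diameter $<\varepsilon$ and whose remaining cell has $\mu$-measure $<\varepsilon$. If two microstates $\phi,\psi\in\Map_\mu$ satisfy $\alpha\circ\phi=\alpha\circ\psi$, then on the coordinates not labelled by the remainder they lie in a common small cell, while the remainder coordinates form a small fraction for both (their pushforwards nearly equal $\mu$); hence $\Delta_2(\phi,\psi)<C\varepsilon$, so the labelling map is injective on any $C\varepsilon$-separated set and $N_{C\varepsilon}(\Map_\mu,\Delta_2)\le|\{\alpha\circ\phi\}|$. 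By Lemma \ref{L:COstuffman}(i), for every refinement $\beta\ge\alpha$ and every finite $F$ and $\delta>0$ there are $F',\delta',L'$ so that the $\alpha$-labelling of any $\phi\in\Map_\mu(\Delta,F',\delta',L',\sigma_i)$ lies in $(\pi^*)^{d_i}(\AP(\beta,F,\delta,\sigma_i))$, giving $N_{C\varepsilon}(\Map_\mu(\Delta,F',\delta',L',\sigma_i),\Delta_2)\le|(\pi^*)^{d_i}(\AP(\beta,F,\delta,\sigma_i))|$. Passing to $\tfrac1{d_i}\log$ and $\limsup_i$, then using monotonicity and the defining infima to replace the left side by $h_{\Sigma,\mu}(\Delta,C\varepsilon)$ and the right by $h_{\Sigma,\mu}(\alpha;\beta)$, taking the infimum over $\beta\ge\alpha$, and finally letting $\varepsilon\to0$, yields $h_{\Sigma,\mu}(\Delta)\le h_{\Sigma,\mu}(\alpha;\CO_\mu)\le h_{\Sigma,\mu}(\CO_\mu)$.

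For the reverse inequality $h_{\Sigma,\mu}(\CO_\mu)\le h_{\Sigma,\mu}(\Delta)$, fix a $\CO_\mu$-observable $\alpha$; since $h_{\Sigma,\mu}(\alpha;\CO_\mu)$ is an infimum over refinements, I may work with a convenient $\beta\ge\alpha$. Given target parameters $F',\delta',L'$, Lemma \ref{L:COstuffman}(ii) produces $F,\delta$ and a continuity-set observable $\beta$ so that any $\phi$ with $\widetilde\beta^F\circ\phi\in\AP(\beta,F,\delta,\sigma_i)$ lies in $\Map_\mu(\Delta,F',\delta',L',\sigma_i)$. Choosing representatives $\iota(b)\in\beta^{-1}(b)$ and setting $\phi_\psi(j)=\iota(\psi(j)(e))$, the consistency built into $\AP$ makes $\widetilde\beta^F\circ\phi_\psi$ an approximant, so $\phi_\psi\in\Map_\mu$, and moreover $\alpha\circ\phi_\psi=\pi^*\circ\psi$; thus $|(\pi^*)^{d_i}(\AP)|$ is at most the number of distinct metric microstates $\phi_\psi$. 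The main obstacle is to bound this number by $N_\varepsilon(\Map_\mu,\Delta_2)$: distinct $\alpha$-labellings need \emph{not} produce $\Delta_2$-separated $\phi_\psi$, because points on opposite sides of a cell boundary are close yet differently labelled. I would resolve this by a covering argument: cover $\Map_\mu$ by $N_\varepsilon$ balls centered at microstates, and within one ball bound the number of distinct $\alpha$-labellings by counting label-flip coordinates. A coordinate can flip only if either $\Delta(\phi_\psi(j),\phi_0(j))\ge\kappa$ (a fraction $<\varepsilon^2/\kappa^2$ by Chebyshev) or $\phi_0(j)$ lies within $\kappa$ of a cell boundary; since the cells are $\mu$-continuity sets, Lemma \ref{L:CO} bounds the latter fraction by an $\eta$ that is small for microstates. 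With $\kappa=\sqrt\varepsilon$ this gives at most $\rho d_i$ flips with $\rho\to0$, so each ball carries at most $\exp(o(d_i))$ labellings. Hence $|(\pi^*)^{d_i}(\AP)|\le N_\varepsilon(\Map_\mu(\Delta,F',\delta',L',\sigma_i),\Delta_2)\,e^{o(d_i)}$, and letting the parameters refine and $\varepsilon\to0$ gives $h_{\Sigma,\mu}(\alpha;\CO_\mu)\le h_{\Sigma,\mu}(\Delta)$; taking the supremum over $\alpha$ completes the proof.

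The genuinely delicate point, specific to the noncompact Polish setting, is the boundary/covering estimate in the last paragraph: it is precisely here that one must combine tightness of $\mu$ (Prokhorov) with the continuity-set estimate of Lemma \ref{L:CO} to convert weak convergence of empirical measures into control of the mass sitting near cell boundaries. The combinatorial counting and the $\varepsilon\to0$ bookkeeping around it are routine once this control is in hand.
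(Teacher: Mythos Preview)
Your overall strategy is the same as the paper's: reduce to a compatible metric via Lemma~\ref{L:switching}, work with the continuity-set algebra $\CO_\mu$ through Theorem~\ref{T:KerrTheorem}, and transfer between $\Map_\mu$ and $\AP$ via Lemma~\ref{L:COstuffman} in both directions. The inequality $h_{\Sigma,\mu}(\Delta)\le h_{\Sigma,\mu}(\CO_\mu)$ and the covering/boundary estimate in the reverse inequality are essentially identical to the paper's argument.

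There is, however, a genuine slip in your lifting step for the reverse inequality. You take a section $\iota\colon B\to X$ of $\beta$ and set $\phi_\psi(j)=\iota(\psi(j)(e))$, then claim that ``the consistency built into $\AP$ makes $\widetilde\beta^F\circ\phi_\psi$ an approximant.'' This fails in general: $(\widetilde\beta^F\circ\phi_\psi)(j)(g)=\beta\bigl(g^{-1}\iota(\psi(j)(e))\bigr)$ depends only on $\psi(j)(e)$ and on the fixed representative $\iota(\psi(j)(e))$, so its empirical $B^F$-distribution is a pushforward of a measure on $B$ under the deterministic map $b\mapsto\widetilde\beta^F(\iota(b))$, which has no reason to approximate $\mu\circ(\widetilde\beta^F)^{-1}$. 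Nor can you recover approximate equivariance of $\phi_\psi$ from closeness at the $\beta$-level, because the $\Gamma$-action on a non-compact Polish $X$ is not assumed uniformly continuous. Consequently $\phi_\psi$ need not lie in $\Map_\mu$, and your subsequent covering of $\Map_\mu$ by $N_\varepsilon$ balls does not cover the $\phi_\psi$'s.

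The paper's fix is to take a section of the \emph{full} map $\widetilde\beta^F$, i.e.\ $s\colon B^F\to X$ with $\widetilde\beta^F\circ s=\id$ on the image of $\widetilde\beta^F$. Then $\widetilde\beta^F\circ(s\circ\psi)$ agrees with $\psi$ on all but the $<\delta$ fraction of coordinates where $\psi(j)\notin\im(\widetilde\beta^F)$, so Lemma~\ref{L:COstuffman}(ii) applies directly to give $s\circ\psi\in\Map_\mu$, and $\alpha\circ(s\circ\psi)=\pi^*\circ\psi$. With this correction your covering argument proceeds exactly as you describe and matches the paper. (A minor further point: Lemma~\ref{L:COstuffman}(ii) produces some $\beta$, not necessarily one refining $\alpha$; one should pass to the common refinement $\alpha\vee\beta$, whose cells are still small so the lemma's conclusion persists. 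The paper glosses over this too.)
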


\begin{proof}

Let $\Sigma=(\sigma_{i}\colon\Gamma\to S_{d_{i}}).$ By Lemma \ref{L:switching} we may assume that $\Delta$ is a bounded compatible metric on $X.$ Let $M$ be the diameter of $(X,\rho).$ We will apply Theorem \ref{T:KerrTheorem} with $\mathcal{S}=\CO_{\mu}.$ We leave it as an exercise to show that for all $x\in X$ we have
\[\mu(B(x,r))=\mu(\overline{B(x,r)})\]
for all but countably many $r\in (0,\infty).$ Thus $\CO_{\mu}$ generates the $\sigma$-algebra of Borel subsets of $X.$ We first show that
\[h_{\Sigma,\mu}(\Delta)\leq h_{\Sigma,\mu}(\CO_{\mu}).\]
Let $\varepsilon>0.$ Since $X$ is Polish, we may apply Prokhorov's Theorem to find a compact $K\subseteq X$ so that
\[\mu(X\setminus K)<\varepsilon.\]
By compactness of $K,$ we find $x_{1},\dots,x_{n}\in K,$ and $\varepsilon>\delta_{1},\dots,\delta_{n}>0$ so that
\[K\subseteq \bigcup_{j=1}^{n}B(x_{j},\delta_{j}),\]
\[B(x_{j},\delta_{j})\in\CO_{\mu}.\]
Set
\[E=X\setminus \bigcup_{j=1}^{n}B(x_{j},\delta_{j}).\]
Define
\[\alpha\colon X\to \{0,1\}^{n+1}\]
by
\[\alpha(x)(k)=\begin{cases}
\chi_{B(x_{k},\delta_{k})}(x),& \textnormal{ if $1\leq k\leq n$}\\
\chi_{E}(x),& \textnormal{ if $k=n+1$.}
\end{cases}\]
Let $\beta\colon X\to B$ be any finite $\CO_{\mu}$-measurable observable refining $\alpha$ and let $\pi\colon B\to A$ be as in the definition of $\beta\geq\alpha.$ Suppose we are given a finite $F\subseteq\Gamma$ and a $\delta>0.$ By the preceding Lemma, we may find a finite $F'\subseteq\Gamma,L'\subseteq C_{b}(X),$ and a $\delta'>0$ so that
\[\beta^{F}_{\sigma_{i}}(\Map_{\mu}(\Delta,F',\delta',L',\sigma_{i}))\subseteq \AP(\beta,F,\delta,\sigma_{i}).\]
By Lemma \ref{L:CO}, we may assume that $L'$ is sufficiently large so that
\[\phi_{*}(u_{d_{i}})(X\setminus E)\leq 2\varepsilon\]
for all $\phi\in\Map_{\mu}(\Delta,F',\delta',L',\sigma_{i}).$ Choose elements $\{\phi_{s}\}_{s\in S}$ with $\phi_{s}\in \Map_{\mu}(\Delta',F',L',\delta',\sigma_{i})$ where $S$ is some index set so that
\[\{\alpha\circ\phi_{s}:s\in S\}=\alpha^{d_{i}}(\Map_{\mu}(\Delta,F',\delta',L',\sigma_{i}))\]
and so that
\[\alpha \circ\phi_{s}\ne \alpha \circ\phi_{s'}\mbox{ for $s\ne s'$ in $S$}.\]
Then
\[|S|\leq |(\pi^{*})^{d_{i}}(\AP(\beta,F,\delta,\sigma_{i}))|.\]
Let $\phi\in\Map_{\mu}(\Delta,F',\delta',L',\sigma_{i})$ and let $s\in S$ be such that $\alpha\circ \phi=\alpha\circ\phi_{s}.$ Then
\[\Delta_{2}(\phi,\phi_{s})^{2}\leq 4M^{2}\varepsilon+\frac{1}{d_{i}}\sum_{j:\phi(j),\phi_{s}(j)\notin E}\Delta(\phi(j),\phi_{s}(j))^{2}.\]
If $\phi(j)$ and $\phi_{s}(j)$ are not in $E,$ then the fact that $\alpha(\phi(j))=\alpha(\phi_{s}(j))$ implies  $\Delta(\phi(j),\phi_{s}(j))<\varepsilon,$ so
\[\Delta_{2}(\phi,\phi_{s})^{2}<4M^{2}\varepsilon+\varepsilon^{2}.\]
Thus by (\ref{E:separationspanning}),
\[h_{\Sigma,\mu}(\Delta,2(4M\varepsilon+\varepsilon^{2})^{1/2})\leq h_{\Sigma,\mu}(\alpha;\beta,F,\delta).\]
Taking the infimum over all $\beta,F,\delta$ we find that
\[h_{\Sigma,\mu}(\Delta,2(4M\varepsilon+\varepsilon^{2})^{1/2})\leq h_{\Sigma,\mu}(\alpha)\leq h_{\Sigma,\mu}(X,\Gamma),\]
and letting $\varespilon\to 0$ implies that
\[h_{\Sigma,\mu}(\Delta)\leq h_{\Sigma,\mu}(X,\Gamma).\]

For the reverse inequality, let $\alpha\colon X\to A$ be a $\CO_{\mu}$-measurable finite observable. Fix $\kappa>0,$ and let $\kappa'>0$ depend upon $\kappa$ in a manner to be determined later.  By Lemma $\ref{L:CO}$ we may choose $\eta>0$ and $L_{0}\subseteq C_{b}(X)$ finite so that if $\nu\in \Prob(X)$ and
\[\left|\int_{X}f\,d\mu-\int_{X}f\,d\nu\right|<\eta,\]
 for all $f\in L_{0}$ then
\[\left|\nu(\alpha^{-1}(\{a\}))-\mu(\alpha^{-1}(\{a\}))\right|<\kappa',\]
\[\nu(\mathcal{O}_{\eta}(\alpha^{-1}(\{a\})\setminus \alpha^{-1}(\{a\})_{\eta})<\kappa'.\]
Let $F'\subseteq \Gamma,L'\subseteq C_{b}(X)$ be given finite sets and $\delta'>0$ be given. We may assume that $L'\supseteq L_{0}.$ By the preceding Lemma, we may choose a refinement $\beta\colon X\to B$ of $\alpha,$  a finite $F\subseteq\Gamma,$ and a $\delta>0$ so that if $\phi\in X^{d_{i}}$ and $\widetilde{\beta}^{F}\circ \phi\in \AP(\beta,F,\delta,\sigma_{i}),$ then $\phi\in\Map_{\mu}(\Delta,F',\delta',L',\sigma_{i}).$ Choose $\pi\colon B\to A$ so that $\alpha=\pi\circ \beta$ and choose a map $s\colon B^{F}\to X$ so that $\id=\widetilde{\beta}^{F}\circ s.$ By construction if $\phi\in\AP(\beta,F',\delta',\sigma_{i})$ we have
\[s\circ \phi\in\Map_{\mu}(\Delta,F',\delta',L',\sigma_{i}).\]

	Let $\varepsilon>0$ be sufficiently small depending upon $\eta$ to be determined later. Let $\{\phi_{t}:t\in T\}$ be such that
\[\phi_{t}\in \AP(\beta,F,\delta,\sigma_{i})\mbox{ for all $t\in T$,}\]
\[\{s\circ \phi_{t}:t\in T\}\mbox{ is  $\varepsilon$-dense in} \{s\circ \phi:\phi\in\AP(\beta,F,\delta,\sigma_{i})\},\]
\[\phi_{t}\ne\phi_{t'}\mbox{ for $t\ne t'$}.\]
We may (and will) choose $T$ so that
\[|T|\leq S_{\varepsilon/2}(\Map_{\mu}(\Delta,F',\delta',L',\sigma_{i}))\leq N_{\varepsilon/2}(\Map_{\mu}(\Delta,F',\delta',L',\sigma_{i})).\]
Note that
\[(\pi^{*})^{d_{i}}(\AP(\beta,F,\delta,\sigma_{i}))\subseteq \bigcup_{t\in T}\alpha^{d_{i}}(B_{\Delta_{2}}(s\circ \phi_{t},\varepsilon)\cap \Map_{\mu}(\Delta,F',\delta',L',\sigma_{i})).\]

	We thus have to bound $|\alpha^{d_{i}}(B_{\Delta_{2}}(\phi_{t},\varepsilon)\cap \Map_{\mu}(\Delta,F',\delta',L',\sigma_{i}))|$ from above. Fix $t\in T,$ Suppose that $\phi\in\Map_{\mu}(\Delta,F',\delta',L',\sigma_{i})$  and that
\[\Delta_{2}(\phi,s\circ \phi_{t})<\varepsilon.\]
Let
\[C=\bigcup_{a\in A}\{1\leq j\leq d_{i}:\phi(j)\in \mathcal{O}_{\eta}(\alpha^{-1}(\{a\}))\setminus \alpha^{-1}(\{a\})_{\eta}\}\cup\{1\leq j\leq d_{i}:s\circ \phi_{t}(j)\in \mathcal{O}_{\eta}(\alpha^{-1}(\{a\}))\setminus\alpha^{-1}(\{a\})_{\eta}\}.\]
By choosing $\kappa'$ sufficiently small, we may assume that
\[u_{d_{i}}(C)\leq \kappa.\]
Let
\[D=\{1\leq j\leq d_{i}:\Delta(\phi(j),s\circ \phi_{t}(j))\geq \sqrt{\varepsilon}\}.\]
Thus
\[u_{d_{i}}(D)\leq \sqrt{\vaerpsilon}.\]
For $j\in\{1,\dots,d_{i}\}\setminus (C\cup D),$ and $a=\alpha(\phi(j)),$ we have that $s\circ \phi_{t}(j)\in \mathcal{O}_{\sqrt{\varepsilon}}(\alpha^{-1}(\{a\})).$ If we choose $\sqrt{\varepsilon}<\eta,$ then $\alpha(s\circ \phi_{t}(j))=a$ for all $j\in\{1,\dots,d_{i}\}\setminus (C\cup D).$ Thus we can find $\mathcal{V}\subseteq\{1,\dots,d_{i}\}$ with $u_{d_{i}}(V)\geq 1-\sqrt{\kappa}-\sqrt{\varepsilon}$ and $\alpha(\phi(j))=\alpha(s\circ \phi_{t}(j))$ for $j\in \mathcal{V}.$ Thus
\begin{align*}
|\alpha^{d_{i}}(B_{\Delta_{2}}(s\circ \phi_{t},\varepsilon)\cap \Map_{\mu}(\Delta,F',\delta',L',\sigma_{i}))|&\leq \sum_{\substack{\mathcal{V}\subseteq\{1,\dots,d_{i}\},\\ |\mathcal{V}|\leq (\kappa+\sqrt{\varepsilon})d_{i}}}|A|^{|\mathcal{V}|}\\
&\leq \sum_{l=1}^{\lfloor{(\kappa+\sqrt{\varepsilon})d_{i}\rfloor}}\binom{d_{i}}{l}|A|^{l}.
\end{align*}
If $\kappa+\sqrt{\varepsilon}<1/2$ then for all large $i$ we have
\[\binom{d_{i}}{l}\leq \binom{d_{i}}{\lfloor{\kappa+\sqrt{\varepsilon\rfloor}}d_{i}}.\]
So by Stirling's Formula the above sum is at most
\[R(\kappa+\sqrt{\varepsilon})d_{i}\exp(d_{i}H(\kappa+\sqrt{\varepsilon}))|A|^{\kappa d_{i}}\]
for some constant $R>0,$ where
\[H(t)=-t\log t-(1-t)\log(1-t)\mbox{ for $0\leq t\leq 1$}.\]
Thus
\[h_{\Sigma,\mu}(\alpha)\leq h_{\Sigma,\mu}(\alpha;\beta,F,\delta,\sigma_{i})\leq H(\kappa+\sqrt{\varepsilon})+\kappa\log |A|+h_{\Sigma,\mu}(\Delta,\varepsilon/2,F',\delta',L').\]
Taking the infimum over all $F',\delta',L'$ and let $\kappa\to 0$ we have
\[h_{\Sigma,\mu}(\alpha)\leq h_{\Sigma,\mu}(\Delta,\varepsilon/2)+H(\sqrt{\varepsilon}).\]
Letting $\varepsilon\to 0$ and then taking the supremum over all $\alpha$ we have
\[h_{\Sigma,\mu}(X,\Gamma)\leq h_{\Sigma,\mu}(\Delta).\]

\end{proof}

\section{Spectral Consequences of Positive Entropy}\label{S:spectral}

	Let $\Gamma\actson (X,\mu)$ be a probability measure-preserving action of a countable discrete group. Associated to this action we have a natural representation
\[\rho_{\Gamma\actson (X,\mu)}\colon\Gamma\to U(L^{2}(X,\mu))\]
by
\[(\rho_{\Gamma\actson (X,\mu)}(g)f)(x)=f(g^{-1}x).\]
The space $\CC1$ inside $L^{2}(X,\mu)$ is clearly $\Gamma$-invariant, so we can consider the representation $\rho^{0}_{\Gamma\actson (X,\mu)}$ obtained by restricting $\rho_{\Gamma\actson (X,\mu)}$ to $L^{2}(X,\mu)\ominus \CC1.$ The representation  $\rho^{0}_{\Gamma\actson(X,\mu)}$ is called the \emph{Koopman} representation. Properties of a probability measure-preserving action are called \emph{spectral} when they only depend upon the Koopman representation. In this section, we deduce spectral properties of an action from assumptions of positive entropy.

\subsection{Representation Theoretic Preliminaries}

 We will need to apply the theory of representations of $*$-algebras. For this paper, we will only need unitary representations of groups, but later work will need this generality. For notation, if $A$ is a $*$-algebra and $\rho\colon A\to B(\mathcal{H})$ is a  $*$-representation and $E$ is a set we use $\rho^{\oplus E}$ for the $*$-representation of $A$ on $\ell^{2}(E,\mathcal{H})$ given by
\[(\rho^{\oplus E}(a)\xi)(x)=\rho(a)\xi(x).\]
Let us mention how the theory for $*$-algebras generalizes that of groups. If $\Gamma$ is a countable discrete group and   $\rho\colon\Gamma\to \mathcal{U}(\mathcal{H})$ is a unitary representation, for $f=\sum_{g\in\Gamma}f_{g}g\in \CC(\Gamma)$ we define
\[\rho(f)=\sum_{g\in\Gamma}f_{g}\rho(g).\]
We use the conjugate linear, antimultiplicative map $*$ on $\CC(\Gamma)$ given by
\[f^{*}=\sum_{g\in\Gamma}\overline{f_{g^{-1}}}g.\]
Under these operations $\CC(\Gamma)$ is $*$-algebra and $\rho(f^{*})=\rho(f)^{*}$ for any unitary representation of $\Gamma.$

If  $A$ is a $*$-algebra and $\rho_{j}\colon \Gamma\to \mathcal{U}(\mathcal{H}_{j})$ are two $*$-representations we write
\[\Hom_{A}(\rho_{1},\rho_{2})\]
for the space of bounded linear $A$-equivariant maps $\mathcal{H}_{1}\to \mathcal{H}_{2}.$

\begin{definition}\emph{ Let $A$ be a $*$-algebra and  $\rho_{j}\colon A \to B(\mathcal{H}_{j}), j=1,2$ be $*$-representations. We say that $\rho_{1}$ and $\rho_{2}$ are} mutually singular, \emph{written $\rho_{1}\perp \rho_{2}$  if for every pair of nonzero subrepresentations $\rho_{j}'$ of $\rho_{j}$ we have that $\rho'_{1}$ is not isomorphic to $\rho_{2}'.$ We say that $\rho_{1}$ is absolutely continuous with respect to $\rho_{2}$, and write $\rho_{1}\ll \rho_{2}$ if $\rho_{1}$ is embeddable in $\rho_{2}^{\oplus E}$ for some set $E.$}\end{definition}

The terminology is motivated by measure theory. For intuition, suppose that $A=C(X)$  and that $\rho_{j}\colon A\to B(\mathcal{H}_{j}),j=1,2$ are $*$-homomorphisms. Then we can find  spectral measures (in the sense of \cite{Conway} IX.1) $E_{j}$ on $X$ so that
\[\rho_{j}(f)=\int_{X}f(x)\,dE_{j}(x).\]
We leave it as an exercise to the reader to check that $\rho_{1}\perp\rho_{2}$ if and only if $E_{1}\perp E_{2},$ and similarly that $\rho_{1}\ll \rho_{2}$ if and only if $E_{1}\ll E_{2}$ (the definitions of absolute continuity and singularity of spectral measures is the same as for usual measures).

We will need the following equivalent conditions on singularity of representations. The following must be well known, but we include a proof for completeness. Throughout the proof, we shall use functional calculus. See \cite{Conway} Chapter VII, IX for background on functional calculus.
\begin{proposition}\label{P:singular} Let $A$ be a unital $*$-algebra  and $\rho_{j}\colon A\to B(\mathcal{H}_{j})$ be two unitary representations and suppose that $\mathcal{H}_{1},\mathcal{H}_{2}$ are separable. The following are equivalent:

(i): $\rho_{1}\perp \rho_{2}$

(ii): $\Hom_{A}(\rho_{1},\rho_{2})=\{0\}$

(iii): $\Hom_{A}(\rho_{2},\rho_{1})=\{0\}$

(iv): There is a sequence $x_{n}\in A$ so that $\max(\|\rho_{1}(x_{n})\|,\|\rho_{2}(x_{n})\|)\leq 1$ and
\[\rho_{1}(x_{n}^{*}x_{n})\to \id_{\mathcal{H}_{1}}\mbox{ in the strong operator topology}\]
\[\rho_{2}(x_{n}^{*}x_{n})\to 0\mbox{ in the strong operator topology}.\]
\end{proposition}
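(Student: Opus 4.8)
The plan is to pass to the single representation $\pi = \rho_1 \oplus \rho_2$ acting on $\mathcal{H} = \mathcal{H}_1 \oplus \mathcal{H}_2$ and to work inside the von Neumann algebra $M = \pi(A)''$ with commutant $M' = \pi(A)'$. Writing $p_1, p_2 \in M'$ for the orthogonal projections onto $\mathcal{H}_1, \mathcal{H}_2$, the identification of an intertwiner $T\colon \mathcal{H}_1 \to \mathcal{H}_2$ with the operator $\left(\begin{smallmatrix} 0 & 0 \\ T & 0\end{smallmatrix}\right) = p_2(\cdot)p_1$ shows that $\Hom_A(\rho_1,\rho_2) \cong p_2 M' p_1$ and $\Hom_A(\rho_2,\rho_1)\cong p_1 M' p_2$. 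I would establish the equivalences through the scheme: (ii)$\Leftrightarrow$(iii) and (iv)$\Rightarrow$(ii) are quick, (i)$\Leftrightarrow$(ii) is a polar decomposition argument, and the real content is (ii)$\Rightarrow$(iv).

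For the easy implications: if $T \in \Hom_A(\rho_1,\rho_2)$, taking adjoints in $T\rho_1(a) = \rho_2(a)T$ and using $\rho_j(a)^* = \rho_j(a^*)$ gives $T^* \in \Hom_A(\rho_2,\rho_1)$, and $T = 0 \iff T^* = 0$, so (ii)$\Leftrightarrow$(iii). For (iv)$\Rightarrow$(ii), given $S \in \Hom_A(\rho_1,\rho_2)$ and the sequence $x_n$ from (iv), I would write $S\xi = \lim_n S\rho_1(x_n^*x_n)\xi = \lim_n \rho_2(x_n^* x_n)S\xi = 0$ for every $\xi \in \mathcal{H}_1$, using that $\rho_1(x_n^*x_n) \to \id$ and $\rho_2(x_n^*x_n) \to 0$ strongly and that $S$ is bounded, whence $S = 0$.

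For (i)$\Leftrightarrow$(ii) I would use the polar decomposition $T = V|T|$ of a nonzero intertwiner. Since $T^*T$ commutes with $\rho_1(A)$, and $\rho_1(A)'$ is a von Neumann algebra closed under continuous functional calculus, $|T| = (T^*T)^{1/2}$ also commutes with $\rho_1(A)$; a short computation then shows $V$ intertwines $\rho_1$ and $\rho_2$. As $V$ restricts to a unitary isomorphism from the nonzero invariant initial space $(\ker T)^\perp$ onto the nonzero invariant final space $\overline{\ran T}$, the existence of such a $T$ contradicts $\rho_1 \perp \rho_2$; conversely a unitary between nonzero subrepresentations, composed with the relevant projection and inclusion, is a nonzero intertwiner, giving the contrapositive in the other direction.

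The main work, and the step I expect to be the principal obstacle, is (ii) (equivalently (ii) together with (iii)) $\Rightarrow$ (iv). From $p_2 M' p_1 = 0 = p_1 M' p_2$ I would conclude that every element of $M'$ is block-diagonal with respect to $\mathcal{H}_1 \oplus \mathcal{H}_2$, so $p_1$ commutes with all of $M'$ and hence $p_1 \in (M')' = M = \pi(A)''$; this is the one place where singularity is genuinely used. Now $p_1$ is a self-adjoint element of the unit ball of $M$, so the Kaplansky density theorem supplies a net of self-adjoint elements in the unit ball of the $*$-algebra $\pi(A)$ converging strongly to $p_1$. Since $\mathcal{H}$ is separable the strong topology is metrizable on the unit ball, so I may extract a sequence $b_n = \pi(a_n)$ with $\|b_n\| \le 1$, $a_n = a_n^* \in A$ (after replacing $a_n$ by $\tfrac12(a_n + a_n^*)$), and $b_n \to p_1$ strongly. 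Setting $x_n = a_n$, joint strong continuity of multiplication on bounded sets gives $\pi(x_n^* x_n) = b_n^2 \to p_1^2 = p_1$ strongly; restricting to $\mathcal{H}_1$ and $\mathcal{H}_2$ and using $p_1|_{\mathcal{H}_1} = \id$ and $p_1|_{\mathcal{H}_2} = 0$ yields precisely the two strong limits in (iv), while $\|\rho_j(x_n)\| \le \|b_n\| \le 1$. The delicate points are thus the passage $p_1 \in M$ and the invocation of Kaplansky density with simultaneous norm and self-adjointness control, which is the essential piece of operator algebra theory underlying the whole proposition.
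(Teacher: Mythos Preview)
Your proof is correct and follows essentially the same approach as the paper: the easy implications are handled identically, the (i)$\Leftrightarrow$(ii) step uses the same polar decomposition argument, and the key (ii)$\Rightarrow$(iv) step is exactly the paper's route through the bicommutant theorem to place $p_1$ in $\pi(A)''$ followed by Kaplansky density. You are in fact slightly more careful than the paper in noting that separability of $\mathcal{H}$ is what permits passage from a net to a sequence.
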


\begin{proof}
 The equivalence of $(ii)$ and $(iii)$ is proved by taking adjoints. To prove that $(ii)$ implies $(i)$ suppose that $\mathcal{K}_{j},j=1,2$ are closed, $A$-invariant, linear subspaces of $\mathcal{H}_{j},j=1,2$ and that $\phi\colon \mathcal{K}_{1}\to \mathcal{K}_{2}$ is an isomorphism. Define $T\colon \mathcal{H}_{1}\to \mathcal{H}_{2}$ by
\[T(\xi)=\Phi(\proj_{\mathcal{K}_{1}}(\xi)).\]
By (ii) we know that $T=0$ which implies that $\mathcal{K}_{j},j=1,2$ are zero.

	To see that $(i)$ implies $(ii)$ suppose that $T\in \Hom_{A}(\rho_{1},\rho_{2}).$ Let $T=U|T|$ be the polar decomposition (see \cite{Conway} VIII.3.11) . The fact that $T$ is equivariant implies that $T^{*}T$ is equivariant, and hence that $|T|=(T^{*}T)^{1/2}$ is, by approximating the square root function by polynomials. Since
\[U=SOT-\lim_{\varepsilon\to 0}T(|T|+\varepsilon)^{-1}\]
we see that $U$ is equivariant. Thus $U$ gives an $A$-equivariant isomorphism $(\ker T)^{\perp}\to \overline{\im T}.$ Since $\rho_{1}\perp \rho_{2}$ we find that $(\ker T)^{\perp}=0,$ and hence that $T=0.$

To prove that $(iv)$ implies $(ii)$ let $T\in\Hom_{A}(\rho_{1},\rho_{2}).$ Let $x_{n}$ be as in $(iv).$ Then, for any $\xi\in \mathcal{H}_{1}$
\[T(\xi)=\lim_{n\to \infty}T(\rho_{1}(x_{n}^{*}x_{n})\xi)=\lim_{n\to \infty}\rho_{2}(x_{n}^{*}x_{n})T(\xi)=0,\]
so $T=0.$

Suppose that $(iii)$ and $(ii)$ hold, we wish to prove $(iv).$ Recall that if $\mathcal{H}$ is a Hilbert space and $E\subseteq B(\mathcal{H})$ then $E'$ denotes the commutant of $E,$ i.e. $E'=\{T\in B(\mathcal{H}):TS=ST\mbox{ for all $S\in E$}\}.$ Suppose that
\[T\in (\rho_{1}\oplus \rho_{2})(A)',\]
Then we can regard $T$ as a matrix
\[T=\begin{bmatrix}
T_{11}&T_{12}\\
T_{21}&T_{22}
\end{bmatrix}\]
where $T_{ij}\in B(\mathcal{H}_{i},\mathcal{H}_{j}).$ Since $T\in (\rho_{1}\oplus \rho_{2})(A)'$ we see that $T_{ij}\in \Hom_{A}(\rho_{i},\rho_{j}).$ Thus $T_{12},T_{21}$ are $0.$ We thus see that
\[\begin{bmatrix}
\id_{\mathcal{H}_{1}}&0\\
0&0\\
\end{bmatrix}\in (\rho_{1}\oplus \rho_{2})(A)''=\overline{(\rho_{1}\oplus \rho_{2})(A)}^{SOT}\]
where the last equality follows from von Neumann's Double Commutant Theorem. We now prove $(iv)$ by using Kaplansky's Density Theorem.

\end{proof}
We need an analogue of the Lebesgue decomposition.

\begin{proposition}\label{P:LebesgueDecomposition} Let $A$ be a unital $*$-algebra  and $\rho_{j}\colon A\to B(\mathcal{H}_{j}),j=1,2$ be two $*$-representations. Then
\[\rho_{1}=\rho_{1,s}\oplus \rho_{1,c}\]
where $\rho_{1,s}\perp \rho_{2},$ and $\rho_{1,c}\ll\rho_{2}.$

\end{proposition}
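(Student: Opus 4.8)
The plan is to build $\mathcal{K}_c\subseteq \mathcal{H}_1$ as the largest $A$-invariant closed subspace on which $\rho_1$ is absolutely continuous with respect to $\rho_2$, and then set $\rho_{1,c}=\rho_1|_{\mathcal{K}_c}$ and $\rho_{1,s}=\rho_1|_{\mathcal{K}_c^{\perp}}$. Throughout I would use that, since $A$ is a $*$-algebra and $\rho_1$ a $*$-representation, the orthogonal complement of any $A$-invariant closed subspace is again $A$-invariant (because $\rho_1(a)^{*}=\rho_1(a^{*})$, so $\langle \rho_1(a)\xi,\eta\rangle=\langle \xi,\rho_1(a^{*})\eta\rangle=0$ whenever $\xi\perp \mathcal{K}$, $\eta\in\mathcal{K}$). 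Hence any such splitting $\mathcal{H}_1=\mathcal{K}_c\oplus \mathcal{K}_c^{\perp}$ is genuinely a direct sum of subrepresentations.

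First I would record the two closure properties of absolute continuity that drive the construction. If $\mathcal{L}\subseteq \mathcal{K}$ are $A$-invariant closed subspaces and $\rho_1|_{\mathcal{K}}\ll \rho_2$, then restricting the embedding of $\rho_1|_{\mathcal{K}}$ into $\rho_2^{\oplus E}$ to $\mathcal{L}$ shows $\rho_1|_{\mathcal{L}}\ll\rho_2$, so absolute continuity passes to subrepresentations. More importantly, if $\{\mathcal{K}_i\}_{i\in I}$ is any family of mutually orthogonal $A$-invariant closed subspaces with $\rho_1|_{\mathcal{K}_i}\ll\rho_2$ for each $i$, then choosing $A$-equivariant isometries $V_i$ realizing $\rho_1|_{\mathcal{K}_i}$ as a subrepresentation of $\rho_2^{\oplus E_i}$ and forming $V=\bigoplus_i V_i$ realizes $\rho_1|_{\overline{\bigoplus_i \mathcal{K}_i}}$ as a subrepresentation of $\bigoplus_i \rho_2^{\oplus E_i}\cong \rho_2^{\oplus(\bigsqcup_i E_i)}$. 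Thus the closed orthogonal sum is again absolutely continuous. This is exactly where the definition of $\ll$ through amplifications by an \emph{arbitrary} index set is used: disjoint unions of index sets absorb arbitrarily many summands, so the class of a.c.\ subspaces is stable under arbitrary orthogonal direct sums.

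With these in hand I would apply Zorn's Lemma to the collection of families of mutually orthogonal nonzero $A$-invariant closed subspaces of $\mathcal{H}_1$ on each member of which $\rho_1$ is absolutely continuous with respect to $\rho_2$, ordered by inclusion (the union of a chain is again such a family, since orthogonality and absolute continuity are checked on pairs and members lying in a single link of the chain). Let $\{\mathcal{K}_i\}_{i\in I}$ be a maximal such family and set $\mathcal{K}_c=\overline{\bigoplus_{i\in I}\mathcal{K}_i}$; by the second closure property, $\rho_{1,c}:=\rho_1|_{\mathcal{K}_c}$ satisfies $\rho_{1,c}\ll\rho_2$. It then remains to verify that $\rho_{1,s}:=\rho_1|_{\mathcal{K}_c^{\perp}}$ is singular with respect to $\rho_2$. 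If it were not, then by definition of $\perp$ there would be a nonzero subrepresentation $\sigma\subseteq \rho_1|_{\mathcal{K}_c^{\perp}}$ isomorphic to a subrepresentation of $\rho_2$; in particular $\sigma$ embeds into $\rho_2=\rho_2^{\oplus\{*\}}$, so $\rho_1|_{\sigma}\ll\rho_2$. The underlying space of $\sigma$ is a nonzero $A$-invariant subspace of $\mathcal{K}_c^{\perp}$, hence orthogonal to every $\mathcal{K}_i$, so adjoining it to $\{\mathcal{K}_i\}$ yields a strictly larger family of the same type, contradicting maximality. Therefore $\rho_{1,s}\perp\rho_2$ and $\rho_1=\rho_{1,c}\oplus\rho_{1,s}$ is the desired decomposition.

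The only genuinely delicate step is the second closure property: checking that a closed orthogonal direct sum of absolutely continuous subrepresentations stays absolutely continuous. The hard part is really a matter of getting this right conceptually rather than computationally, and it succeeds precisely because $\ll$ is defined via embeddings into $\rho_2^{\oplus E}$ with $E$ allowed to be an arbitrary set. Everything else is the routine Zorn's Lemma packaging together with the observation that, for $*$-representations, an invariant closed subspace and its orthogonal complement always form a complementary pair of subrepresentations. (I note that one could instead phrase singularity via the vanishing of $\Hom_A$ using Proposition~\ref{P:singular}, but since that proposition assumes separability while the present statement does not, the direct definitional argument above is preferable.)
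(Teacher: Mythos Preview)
Your proof is correct and follows essentially the same strategy as the paper: apply Zorn's Lemma to a collection of mutually orthogonal invariant subspaces, take the closed direct sum as $\mathcal{H}_{1,c}$, and argue by maximality that the complement is singular. The only cosmetic difference is that the paper takes its maximal family among subspaces that embed into $\rho_2$ itself (so the direct sum embeds into $\rho_2^{\oplus B}$ immediately), whereas you allow each piece to embed into some amplification $\rho_2^{\oplus E_i}$ and then pass to the disjoint union of index sets; both work, and your write-up supplies more of the routine details (invariance of complements, the closure properties) that the paper leaves implicit.
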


\begin{proof} By Zorn's Lemma, we can find a maximal family $(\mathcal{K}_{\beta})_{\beta\in B}$ of pairwise orthogonal, $A$-invariant, closed, linear subspaces of $\mathcal{H}_{1}$ so that $\rho_{1}\big|_{\mathcal{K}_{\beta}}$ embeds into $\rho_{2}.$ Let
\[\mathcal{H}_{1,s}=\mathcal{H}_{1}\cap\left[ \bigoplus_{\beta\in B}\mathcal{K}_{\beta}\right]^{\perp}.\]
By maximality $\rho_{1}\big|_{\mathcal{H}_{1,s}}$ is singular with respect to $\rho_{2}.$ Setting
\[\mathcal{H}_{1,c}=\bigoplus_{\beta\in B}\mathcal{K}_{\beta}\]
and defining $\rho_{1,s},\rho_{1,c}$ by restricting $\rho_{1}$ to $\mathcal{H}_{1,s},\mathcal{H}_{1,c}$ completes the proof.

\end{proof}

\subsection{Proofs of the Main Applications}

\begin{theorem}\label{T:SingularLebesgue} Let $\Gamma$ be a countable discrete sofic group with sofic approximation $\Sigma.$ Let $(X,\mathcal{M},\mu)$ be a standard probability space with $\Gamma\actson (X,\mathcal{M},\mu)$ by measure-preserving transformations. Let $\mathcal{H}\subseteq L^{2}(X,\mu)$ be a closed, linear, $\Gamma$-invariant subspace such that  $\mathcal{M}$ is generated by
\[\{gf^{-1}(A):g\in\Gamma,f\in \mathcal{H}, A\subseteq \CC\mbox{ is Borel}\}.\]
 Suppose that
\[\rho_{\Gamma\actson(X,\mu)}\big|_{\mathcal{H}}\perp \lambda_{\Gamma}.\]
Then
\[h_{\Sigma,\mu}(X,\Gamma)\leq 0.\]

\end{theorem}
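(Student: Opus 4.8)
The plan is to compute the entropy through a Polish model built directly out of $\mathcal{H}$, and then to exploit the singularity hypothesis via the ``almost invariant'' elements furnished by Proposition~\ref{P:singular}(iv). Since $L^{2}(X,\mu)$ is separable I would first fix a countable $\|\cdot\|_{2}$-dense sequence $(f_{k})_{k\geq 1}$ in $\mathcal{H}$; density in $L^{2}$ guarantees that the $\sigma$-algebra generated by $\{gf_{k}^{-1}(A)\}$ coincides with the one generated by all of $\mathcal{H}$, so by hypothesis $(f_{k})$ together with the $\Gamma$-action generate $\mathcal{M}$. I would then pass to the canonical topological model $\Phi\colon X\to\CC^{(f_{k})\times\Gamma}$, $\Phi(x)(f_{k},g)=f_{k}(g^{-1}x)$, which is Polish, push $\mu$ forward, and equip it with the bounded continuous pseudometric $\Delta(x,y)^{2}=\sum_{k}2^{-k}\min(|f_{k}(x)-f_{k}(y)|^{2},1)$, which one checks is dynamically generating because the coordinate functions $f_{k}$ and the $\Gamma$-action separate points. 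By Theorem~\ref{T:PolishModels} we have $h_{\Sigma,\mu}(X,\Gamma)=h_{\Sigma,\mu}(\Delta)$, so it suffices to prove $h_{\Sigma,\mu}(\Delta,\varepsilon)\leq 0$ for every $\varepsilon>0$.

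Next I would apply Proposition~\ref{P:singular} to $\rho_{1}=\rho_{\Gamma\actson(X,\mu)}\big|_{\mathcal{H}}$ and $\rho_{2}=\lambda_{\Gamma}$, regarded as representations of the unital $*$-algebra $\CC(\Gamma)$: the hypothesis $\rho_{1}\perp\rho_{2}$ yields $x_{n}\in\CC(\Gamma)$ with $\|\lambda(x_{n})\|\leq 1$, with $\rho_{1}(x_{n}^{*}x_{n})\to\id_{\mathcal{H}}$ strongly and $\lambda(x_{n}^{*}x_{n})\to 0$ strongly; in particular $\rho_{1}(x_{n}^{*}x_{n})f_{k}\to f_{k}$ in $L^{2}(\mu)$ for each $k$, while $\tau(x_{n}^{*}x_{n})=\ip{\lambda(x_{n}^{*}x_{n})\delta_{e},\delta_{e}}\to 0$. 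Given a microstate $\phi\in\Map_{\mu}(\Delta,F,\delta,L,\sigma_{i})$ and a generator $f=f_{k}$, I pull it back to $\widehat{\phi}_{f}\in\CC^{d_{i}}$ via $\widehat{\phi}_{f}(j)=f(\phi(j))$. Approximate equivariance of $\phi$, combined with the fact that the $\sigma_{i}(g)$ are permutations (so $\sigma_{i}(g)^{*}$ agrees with $\sigma_{i}(g^{-1})$ on almost every coordinate), transfers the representation to the finite model: for $y\in\CC(\Gamma)$ supported in $F$ one obtains $\sigma_{i}(y)\widehat{\phi}_{f}\approx\widehat{\phi}_{\rho(y)f}$ in $\|\cdot\|_{\ell^{2}(d_{i},u_{d_{i}})}$. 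Applying this with $y=x_{n}$, then using $\sigma_{i}(x_{n})^{*}\approx\sigma_{i}(x_{n}^{*})$ and $\rho(x_{n}^{*}x_{n})f\approx f$, produces the key relation $T_{n,i}\widehat{\phi}_{f}\approx\widehat{\phi}_{f}$, where $T_{n,i}=\sigma_{i}(x_{n})^{*}\sigma_{i}(x_{n})\geq 0$.

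The counting step is then a concentration argument. Because $T_{n,i}\geq 0$ and $\tr(T_{n,i})=\|\sigma_{i}(x_{n})\|_{2}^{2}\to\tau(x_{n}^{*}x_{n})$ by the embedding-sequence trace property (Proposition~\ref{P:soficextension} and~(\ref{E:L2convergence})), the spectral projection $p_{n,i}=\mathbf{1}_{[1-\eta,1+\eta]}(T_{n,i})$ satisfies $\tr(p_{n,i})\leq(1-\eta)^{-1}\tr(T_{n,i})$, which is small once $n$ is large. The relation $T_{n,i}\widehat{\phi}_{f}\approx\widehat{\phi}_{f}$ forces $\|(1-p_{n,i})\widehat{\phi}_{f}\|\leq\eta^{-1}\|(T_{n,i}-1)\widehat{\phi}_{f}\|\approx 0$, so every pulled-back vector $\widehat{\phi}_{f_{k}}$ lies within a small error of $\ran(p_{n,i})$, inside a ball of radius $\approx\|f_{k}\|_{2}$. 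Since $\Delta_{2}(\phi,\psi)$ is controlled by $\|\widehat{\phi}_{f_{k}}-\widehat{\psi}_{f_{k}}\|$ for $k\leq m$ (with $m$ chosen so that $2^{-m}<\varepsilon^{2}$), an $\varepsilon$-net for the microstates comes from product nets for these finitely many vectors, and Lemma~\ref{L:triviallemma} bounds each factor by $(\tfrac{3M+\varepsilon'}{\varepsilon'})^{2\tr(p_{n,i})d_{i}}$. Taking $F\supseteq\supp(x_{n}^{*}x_{n})$, this gives $\limsup_{i}\tfrac{1}{d_{i}}\log N_{\varepsilon}(\Map_{\mu}(\Delta,F,\delta,L,\sigma_{i}),\Delta_{2})\lesssim 2m(1-\eta)^{-1}\tau(x_{n}^{*}x_{n})\log(\cdots)$, and letting $n\to\infty$ drives the infimum over $(F,\delta,L)$ to $0$, whence $h_{\Sigma,\mu}(\Delta,\varepsilon)\leq 0$ and $h_{\Sigma,\mu}(X,\Gamma)\leq 0$.

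The hard part will be that the generators in $\mathcal{H}$ are genuinely unbounded (which is precisely why we use Polish rather than compact models), so the weak convergence $\phi_{*}(u_{d_{i}})\to\mu$ does not by itself control $\|\widehat{\phi}_{f}\|_{\ell^{2}(d_{i},u_{d_{i}})}^{2}=\int|f|^{2}\,d\phi_{*}(u_{d_{i}})$, nor the transfer $\sigma_{i}(y)\widehat{\phi}_{f}\approx\widehat{\phi}_{\rho(y)f}$, both of which involve integrals of unbounded functions. I expect to handle this by feeding the truncations $\min(|f_{k}|^{2},R)$ into the test set $L$ and, following the Prokhorov/basepoint localization already used in Section~\ref{S:Polishdefinition} to prove finiteness of $N_{\varepsilon}$, confining all but an arbitrarily small fraction of coordinates to a compact set on which the $f_{k}$ are bounded; the few remaining coordinates then contribute only through the bounded pseudometric $\Delta$ and a subexponential combinatorial factor of the type $\binom{d_{i}}{\leq\kappa d_{i}}$. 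Making the interaction between this localization and the spectral identity $T_{n,i}\widehat{\phi}_{f}\approx\widehat{\phi}_{f}$ precise is the technical heart of the proof.
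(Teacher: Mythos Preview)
Your proposal is correct and follows essentially the same route as the paper: pass to the canonical Polish model on $\CC^{\NN\times\Gamma}$ built from a dense sequence in $\mathcal{H}$, invoke Proposition~\ref{P:singular}(iv) to produce an element $\alpha\in\CC(\Gamma)$ acting near the identity on the generators but with small trace of $\alpha^{*}\alpha$, then use the spectral projection of $\sigma_{i}(\alpha)^{*}\sigma_{i}(\alpha)$ near $1$ together with Lemma~\ref{L:triviallemma} to bound the net. The only cosmetic difference is in the handling of unboundedness: rather than Prokhorov localization plus test functions $\min(|f_{k}|^{2},R)$, the paper simply composes each coordinate with a fixed $G\in C_{c}(\CC)$ satisfying $G(z)=z$ for $|z|\leq M$ and $\|G\circ Z_{n}-Z_{n}\|_{2}<\varepsilon$, and then uses $\|\rho(\alpha^{*}\alpha)\|\leq 1$ to transfer the near-invariance from $Z_{n}$ to the bounded $G\circ Z_{n}$ before passing to the finite model---this slightly streamlines the ``hard part'' you flag at the end.
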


\begin{proof} Let $(f_{n})_{n=1}^{\infty}$ be a $\|\cdot\|_{2}$-dense subset of $\mathcal{H}.$ Let
\[\Phi\colon X\to \CC^{\NN\times\Gamma}\]
be defined by
\[\Phi(x)(n,g)=f_{n}(g^{-1}x)\]
and let
\[\nu=\Phi_{*}(\mu).\]
Let $\Gamma\actson \CC^{\NN\times\Gamma}$ by shifts. Since $\{gf_{n}^{-1}(A):g\in\Gamma,n\in\NN,A\subseteq\CC\mbox{ is Borel}\}$ generates $\mathcal{M}$ we see that $\Phi$ induces a $\Gamma$-invariant isomorphism $(X,\mu)\cong (\CC^{\NN\times\Gamma},\nu).$ Thus
\[h_{\Sigma,\mu}(X,\Gamma)=h_{\Sigma,\nu}(\CC^{\NN\times\Gamma},\Gamma).\]
 For $n\in \NN$ let  $Z_{n}\colon \CC^{\NN\times\Gamma}\to\CC$ be defined by
\[Z_{n}(y)=z(n,e),\]
then $Z_{n}\circ \Phi=f_{n}.$ Thus
\[\rho_{\Gamma\actson (\CC^{\NN\times\Gamma},\nu)}\big|_{\overline{\Span\{gZ_{n}:g\in\Gamma,n\in\NN\}}}\cong \rho_{\Gamma\actson (X,\mu)}\big|_{\mathcal{H}}\perp \lambda_{\Gamma}.\]
To simplify notation, we will use $\rho$ for $\rho_{\Gamma\actson(\CC^{\NN\times\Gamma},\nu)}.$ Let $\Delta$ be the dynamically generating pseudometric on $\CC^{\NN\times\Gamma}$ defined by
\[\Delta(z,w)=\sum_{n=1}^{\infty}2^{-n}\frac{|z(n,e)-w(n,e)|}{1+|z(n,e)-w(n,e)|}.\]
As $\CC^{\NN\times\Gamma}$ is clearly Polish, we can use $\CC^{\NN\times\Gamma},\Delta,\nu$ to do our computation. Let $\varepsilon>0$ be arbitrary, and let $0<\eta<\varepsilon$ be arbitrary. Let $\delta>0$ be sufficiently small  and $F\subseteq \Gamma$ be a sufficiently large finite set which will depend upon $\varepsilon,\eta$  in a manner to be determined later.  Given $\phi\in\Map(\Delta,F,\delta,\sigma_{i})$ for $n\in\NN$ define $z_{\phi,n}\in\CC^{d_{i}}$ by
\[z_{\phi,n}(j)=\phi(j)(n)(e)\]
and define $z_{\phi}\in(\CC^{\NN})^{d_{i}}$ by
\[z_{\phi}(j)(n)=z_{\phi,n}(j).\]
Conversely, given $z\in(\CC^{\NN})^{d_{i}}$ define $\psi_{z}\colon\{1,\dots,d_{i}\}\to \CC^{\NN\times\Gamma}$ by
\[\psi_{z}(j)(n,g)=z(\sigma_{i}(g)^{-1}(j))(n)\]
 If $F,\delta$ are chosen carefully, then for all sufficiently large $i$ we have that
\[\Delta_{2}(\psi_{z_{\phi}},\phi)<\varepsilon.\]
Choose $N\in\NN$ so that
\[2^{-N}<\varepsilon.\]
Since
\[\Gamma\actson\overline{\Span\{gZ_{n}:g\in\Gamma,n\in\NN\}\}}\perp\lambda_{\Gamma},\]
by Proposition \ref{P:singular} we may find a $\alpha\in\CC(\Gamma)$ so that $\max(\|\lambda(\alpha^{*}\alpha)\|,\|\rho(\alpha^{*}\alpha)\|)\leq 1$ and
\[\|\rho(\alpha^{*}\alpha)Z_{n}-Z_{n}\|_{2}<\eta,\mbox{ $1\leq n\leq N$}\]
\[\|\lambda(\alpha^{*}\alpha)\delta_{e}\|_{2}<\eta.\]
Let $M>0$ be sufficiently large depending upon $\varepsilon$ in a manner to be determined later. We will assume that $M$ is large enough so that there exists $G\in C_{c}(\CC)$ with $\|G\|_{\infty}\leq M$ and $G(z)=z$ for $|z|\leq M$ and
\[\|G\circ Z_{n}-Z_{n}\|_{2}<\varepsilon\mbox{ for $1\leq n\leq N$}.\]
Note that $G$ may be chosen independent of $\eta.$
As
\[\|\rho(\alpha^{*}\alpha)\|\leq 1\]
we then have
\begin{equation}\label{E:ifinitaryeajaksdjgpaoidj}
\|\rho(\alpha^{*}\alpha)G\circ Z_{n}-G\circ Z_{n}\|_{2}<3\varepsilon,\mbox{ for $1\leq n\leq N$}.
\end{equation}
Let $L\subseteq C_{b}(\CC^{\NN\times \Gamma})$ be sufficiently large in a manner to be determined later. We will assume that
\[L\supseteq\{G\circ Z_{1},\dots,G\circ Z_{N}\}.\]
We will use $\|\cdot\|_{2}$ for the $\ell^{2}$ norm on $\{1,\dots,d_{i}\}$ with respect to the uniform probability measure. By (\ref{E:ifinitaryeajaksdjgpaoidj}) if  $F\subseteq\Gamma,L\subseteq C_{b}(\CC^{\NN\times \Gamma})$ are sufficiently large and $\delta>0$ is sufficiently small then
\[\|\sigma_{i}(\alpha)^{*}\sigma_{i}(\alpha)G\circ z_{\phi,n}-G\circ z_{\phi,n}\|_{2}<6\varepsilon\mbox{ for $1\leq n\leq N$},\]
for all $\phi\in\Map_{\mu}(\rho,F,L,\delta,\sigma_{i})$ (with the notational conventions introduced after Definition \ref{S:soficdefn}). Let $p=\chi_{[1-\sqrt{\varepsilon},1+\sqrt{\varepsilon}]}(\sigma_{i}(\alpha)^{*}\sigma_{i}(\alpha))$ (this expression should be interpreted in the sense of functional calculus) then for all $\phi\in\Map_{\mu}(\rho,F,L,\delta,\sigma_{i}),$
\begin{align*}
\|p G\circ z_{\phi,n}-G\circ z_{\phi,n}\|_{2}^{2}&=\|\chi_{(\sqrt{\varepsilon},\infty)}(|\sigma_{i}(\alpha)^{*}\sigma_{i}(\alpha)-1|)G\circ z_{\phi,n}\|_{2}^{2}\\
&=\ip{\chi_{(\sqrt{\varepsilon},\infty)}(|\sigma_{i}(\alpha)^{*}\sigma_{i}(\alpha)-1|)G\circ z_{\phi,n},G\circ z_{\phi,n}}\\
&\leq \frac{1}{\varepsilon}\ip{(\sigma_{i}(\alpha)^{*}\sigma_{i}(\alpha)-1)^{2}G\circ z_{\phi,n},G\circ z_{\phi,n}}\\
&=\frac{1}{\varepsilon}\|(\sigma_{i}(\alpha)^{*}\sigma_{i}(\alpha)-1)(G\circ z_{\phi,n})\|_{2}^{2}\\
&<36\varepsilon
\end{align*}
and
\[\tr(p)\leq\frac{1}{1-\sqrt{\varepsilon}}\tr(\sigma_{i}(\alpha)^{*}\sigma_{i}(\alpha)).\]
Since
\[\lim_{i\to \infty}\tr(\sigma_{i}(\alpha)^{*}\sigma_{i}(\alpha)^{2})=\tau(\lambda(\alpha)^{*}\lambda(\alpha))=\ip{\lambda(\alpha)^{*}\lambda(\alpha)\delta_{e},\delta_{e}}\leq \|\lambda(\alpha)^{*}\lambda(\alpha)\delta_{e}\|_{2}<\eta\]
we see that for all large $i$ we have
\[\tr(p)<2\eta\]
provided $\varepsilon<\frac{1}{(2015)!}.$ Since $p$ is an orthogonal projection, we know  by Lemma \ref{L:triviallemma} we may choose an $\varepsilon$-dense subset $S$ of $Mp\Ball(\ell^{2}(d_{i},u_{d_{i}}))$ with
\[|S|\leq \left(\frac{3M+\varepsilon}{\varepsilon}\right)^{4\eta d_{i}}.\]
If $L$ and $M$ are sufficiently large, then
\[C_{i}=\{1\leq j\leq d_{i}:|z_{\phi,n}(j)|\leq M\mbox{ for $1\leq n\leq N$}\}\]
has
\[|C_{i}|\geq (1-\varepsilon)d_{i}.\]
Define $w_{\phi}\in (\CC^{\NN})^{d_{i}}$ by
\[w_{\phi}(j)(n)=G(z_{\phi,n}(j)).\]
For these values of $F,\delta,L,M$ we have
\begin{align*}
\Delta_{2}(\phi,\psi_{w_{\phi}})^{2}&\leq \varepsilon+\sum_{j=1}^{d_{i}}\sum_{n=1}^{N}2^{-n}\frac{|z_{\phi}(n,j)-w_{\phi}(n,j)|}{1+|z_{\phi}(n,j)-w_{\phi}(n,j)|}\\
&\leq 2\varepsilon
\end{align*}
as $w_{\phi}(n,j)=z_{\phi}(n,j)$ for $j\in\{1,\dots,d_{i}\}\setminus C_{i}.$ For $1\leq n\leq N$ choose $\xi_{\phi,n}\in S$ so that
\[\|pG\circ z_{\phi,n}-\xi_{\phi,n}\|_{2}<\varepsilon\]
and define $\xi_{\phi}\in (\CC^{\NN})^{d_{i}}$ by
\[\xi_{\phi}(j)(n)=\chi_{\{1,\dots,N\}}(n)\xi_{\phi,n}(j).\]
Then
\begin{align*}
\Delta_{2}(\psi_{w_{\phi}},\psi_{\xi_{\phi}})^{2}&\leq \varepsilon+\sum_{n=1}^{N}\frac{2^{-n}}{d_{i}}\sum_{j=1}^{d_{i}}|G(z_{\phi,n}(j))-\xi_{\phi,n}(j)|\\
&\leq \varepsilon+\max_{1\leq n\leq N}\|G\circ z_{\phi,n}-\xi_{\phi,n}\|_{2}\\
&\leq 2\varepsilon+6\sqrt{\varepsilon}
\end{align*}
the second line following from the Cauchy-Schwartz inequality.

Thus
\[\Delta_{2}(\phi,\psi_{\xi_{\phi}})\leq \sqrt{2}\sqrt{\varepsilon}+(2\varepsilon+6\sqrt{\varepsilon})^{1/2},\]

Hence for all large $i,$
\[S_{2\sqrt{2}\sqrt{\varepsilon}+2(2\varepsilon+6\sqrt{\varepsilon})^{1/2}}(\Map_{\mu}(\Delta,F,\delta,L,\sigma_{i}),\Delta_{2})\leq |S|^{N}\leq \left(\frac{3M+\varepsilon}{\varepsilon}\right)^{4N\eta d_{i}}.\]
Thus
\[h_{\Sigma,\mu}(\Delta,4\sqrt{2}\sqrt{\varepsilon}+4(\varepsilon+6\sqrt{\varepsilon})^{1/2})\leq 4 N\eta\log\left(\frac{3M+\varepsilon}{\varepsilon}\right).\]
Note that $\eta$ can be any number in $(0,\varepsilon).$ Thus we can let $\eta\to 0$ to find that
\[h_{\Sigma,\mu}(\Delta,4\sqrt{2}\sqrt{\varepsilon}+4(\varepsilon+6\sqrt{\varepsilon})^{1/2})\leq 0\]
since $\varepsilon>0$ is arbitrary we find that
\[h_{\Sigma,\mu}(X,\Gamma)\leq 0.\]

\end{proof}

The space $\mathcal{H}$ can be much smaller than $L^{2}(X,\mu).$ For example consider the case that $(X,\mu)=(B,\eta)^{\Gamma}$ and the action is Bernoulli. For $f\in L^{2}(B,\eta)$ and $g\in\Gamma,$ let $f_{g}\in L^{2}(X,\mu)$ be defined by
\[f_{g}(x)=f(x(g)).\]
Then we can take
\[\mathcal{H}=\overline{\Span\{f_{g}:g\in\Gamma,f\in L^{2}(B,\eta)\}}\]
and one can show that
\[\Gamma\actson L^{2}(X,\mu)\ominus \mathcal{H}\cong \ell^{2}(\Gamma)^{\oplus \infty}.\]
So indeed the space $\mathcal{H}$ is much smaller than $L^{2}(X,\mu).$  

%

For the next application, recall that the weak topology on $\Aut(X,\mu)$ is defined by saying that a basic neighborhood of $\alpha$ is given by $U_{A_{1},\dots,A_{n},\varepsilon}(\alpha)$ where $A_{1},\dots,A_{n}$ are measurable subsets of $X$ and $\varepsilon>0$ and
\[U_{A_{1},\dots,A_{n},\varepsilon}(\alpha)=\bigcap_{j=1}^{n}\{\beta\in\Aut(X,\mu):|\mu(\beta^{-1}(A_{j})\Delta\alpha^{-1}(A_{j}))|<\varepsilon\}.\]
 An action $\Gamma\actson (X,\mu)$ is \emph{compact} if there is a compact subgroup $K\subseteq \Aut(X,\mu)$ (for the weak topology) and a homomorphism $\pi\colon \Gamma\to K$ so that
\[gx=\pi(g)(x)\]
for almost every $x\in X.$

\begin{cor}\label{C:compact} Let $\Gamma$ be a countably infinite discrete sofic group with sofic approximation $\Sigma.$ Suppose that $\Gamma\actson (X,\mu)$ is a compact action. Then $h_{\Sigma,\mu}(X,\Gamma)\leq 0.$ \end{cor}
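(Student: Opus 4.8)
The plan is to apply Theorem \ref{T:SingularLebesgue} with the choice $\mathcal{H}=L^2(X,\mu)$. For this choice the generating hypothesis is automatic: since every measurable set $B\subseteq X$ satisfies $B=\chi_B^{-1}(\{1\})$ with $\chi_B\in L^2(X,\mu)$, the family $\{gf^{-1}(A):g\in\Gamma,\ f\in\mathcal{H},\ A\subseteq\CC\text{ Borel}\}$ already generates all of $\mathcal{M}$. Thus the entire content of the corollary is the representation-theoretic assertion
\[
\rho_{\Gamma\actson(X,\mu)}\big|_{L^2(X,\mu)}\perp\lambda_\Gamma,
\]
after which Theorem \ref{T:SingularLebesgue} gives $h_{\Sigma,\mu}(X,\Gamma)\le 0$ at once. (I use the full, unreduced Koopman representation on all of $L^2(X,\mu)$; the constants $\CC 1$ carry the trivial subrepresentation, which is finite-dimensional and so causes no trouble below.)

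To produce the singularity, first I would exploit compactness. Since $\Gamma\actson(X,\mu)$ is compact there is a compact subgroup $K\subseteq\Aut(X,\mu)$ and a homomorphism $\pi\colon\Gamma\to K$ implementing the action. The Koopman representation of $\Aut(X,\mu)$ on $L^2(X,\mu)$ is continuous for the weak topology, so $k\mapsto(f\mapsto f\circ k^{-1})$ is a strongly continuous unitary representation of the compact group $K$. By the Peter--Weyl theorem, $L^2(X,\mu)$ decomposes as a Hilbert-space direct sum $\bigoplus_i V_i$ of finite-dimensional $K$-invariant subspaces. Restricting along $\pi$, each $V_i$ is a finite-dimensional $\Gamma$-invariant subspace, so $\rho_{\Gamma\actson(X,\mu)}\big|_{V_i}$ is a finite-dimensional unitary representation of $\Gamma$.

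The core point is then that $\lambda_\Gamma$ admits no nonzero finite-dimensional subrepresentation, precisely because $\Gamma$ is infinite. Indeed, if $V\subseteq\ell^2(\Gamma)$ were finite-dimensional, $\Gamma$-invariant and nonzero, pick $0\ne\xi\in V$; then $\|\lambda(g)\xi\|_2=\|\xi\|_2$ for all $g$, while $\langle\lambda(g)\xi,\delta_h\rangle=\xi(g^{-1}h)\to 0$ as $g\to\infty$, since only finitely many $g$ keep $g^{-1}h$ inside any fixed finite set on which $\xi$ is appreciable. Hence $\lambda(g)\xi\to 0$ weakly; on the finite-dimensional space $V$ weak convergence is norm convergence, contradicting $\|\lambda(g)\xi\|_2=\|\xi\|_2>0$.

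With this in hand, the singularity follows by a projection argument. Suppose, for contradiction, that some nonzero $\Gamma$-invariant closed $\mathcal{K}\subseteq L^2(X,\mu)$ is isomorphic, via an equivariant isometry $\phi\colon\mathcal{K}\to\ell^2(\Gamma)$, onto a subrepresentation of $\lambda_\Gamma$. Choose $0\ne\xi\in\mathcal{K}$ and an index $i$ with $P_i\xi\ne 0$, where $P_i$ is the (equivariant) orthogonal projection onto $V_i$. Then $T=P_i\circ\phi^{-1}\colon\phi(\mathcal{K})\to V_i$ is a nonzero $\Gamma$-equivariant map, so (taking adjoints, as in the equivalence of $(ii)$ and $(iii)$ in Proposition \ref{P:singular}) $T^*\colon V_i\to\ell^2(\Gamma)$ is nonzero and equivariant, whence $\im(T^*)$ is a nonzero finite-dimensional $\Gamma$-invariant subspace of $\ell^2(\Gamma)$, contradicting the previous paragraph. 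Therefore $\rho_{\Gamma\actson(X,\mu)}\big|_{L^2(X,\mu)}\perp\lambda_\Gamma$, and the corollary follows. The only genuinely delicate step is this last gluing from the finite-dimensional pieces up to the whole space; everything else is either a direct invocation of Theorem \ref{T:SingularLebesgue} or standard compact-group representation theory.
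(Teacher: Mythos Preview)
Your proof is correct and follows the same high-level strategy as the paper: verify that the Koopman representation is singular with respect to $\lambda_\Gamma$, then invoke Theorem~\ref{T:SingularLebesgue}. The difference lies only in how the singularity is established. The paper argues abstractly: a compact representation (one with $\overline{\rho(\Gamma)}^{WOT}\subseteq\mathcal{U}(\mathcal{H})$) can have no nonzero weakly mixing subrepresentation, while $\lambda_\Gamma$ is weakly mixing, so the two are singular. You instead use Peter--Weyl to decompose $L^2(X,\mu)$ into finite-dimensional $\Gamma$-invariant pieces, and then show directly that $\lambda_\Gamma$ admits no nonzero finite-dimensional subrepresentation (your weak-convergence argument is in fact precisely the mixing property of $\lambda_\Gamma$ applied to a single vector). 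Your route is a bit more hands-on and self-contained; the paper's is shorter because it leans on standard terminology. Substantively they are the same idea expressed at different levels of abstraction.
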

\begin{proof}

	Recall that a unitary representation $\rho\colon \Gamma\to U(\mathcal{H})$ is called \emph{weakly mixing} if
\[0\in\overline{\rho(\Gamma)}^{WOT},\]
and is \emph{compact} if
\[\overline{\rho(\Gamma)}^{WOT}\subseteq U(\mathcal{H}).\]
It is clear that a compact representation has no nontrivial weakly mixing subrepresentations. It is also well-known that the left regular representation is weakly mixing. Thus if $\Gamma\actson (X,\mu)$ is compact, then
\[\rho^{0}_{\Gamma\actson (X,\mu)}\perp \lambda_{\Gamma}\]
and we may apply Theorem \ref{T:SingularLebesgue}.

\end{proof}

In particular, note that if $K$ is a compact group and $\phi\colon \Gamma\to K$ is a  homomorphism, then the action $\alpha$ of $\Gamma$ on $K$ given by
\[\alpha(g)(x)=\phi(g)\cdot x\]
has entropy at most zero with respect to any sofic approximation.

\begin{definition}\emph{Let $\Gamma$ be a countable discrete sofic group with sofic approximation $\Sigma.$ We say that a probability measure-preserving action $\Gamma\actson (X,\mu)$ has} completely positive entropy \emph{(with respect to $\Sigma$) if whenever $\Gamma\actson (Y,\nu)$ is a factor of $\Gamma\actson (X,\mu)$ and  $Y$ is not a one-atom space then $h_{\Sigma,\nu}(Y,\Gamma)>0.$}
\end{definition}

\begin{cor}\label{C:cpe} Let $\Gamma$ be a countable discrete sofic  group with sofic approximation $\Sigma.$ Suppose that $\Gamma\actson (X,\mu)$ is a probability measure-preserving action which has completely positive entropy with respect to $\Sigma.$ Then $\rho^{0}_{\Gamma\actson (X,\mu)}\ll\lambda_{\Gamma}.$ \end{cor}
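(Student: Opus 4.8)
The plan is to combine the Lebesgue-type decomposition of representations (Proposition~\ref{P:LebesgueDecomposition}) with Theorem~\ref{T:SingularLebesgue} and the definition of completely positive entropy. First I would apply Proposition~\ref{P:LebesgueDecomposition} to the pair $\rho^{0}_{\Gamma\actson(X,\mu)}$ and $\lambda_{\Gamma}$, obtaining an orthogonal decomposition of closed, $\Gamma$-invariant subspaces
\[
L^{2}(X,\mu)\ominus\CC 1=\mathcal{H}_{s}\oplus\mathcal{H}_{c},
\]
where $\rho^{0}_{\Gamma\actson(X,\mu)}\big|_{\mathcal{H}_{s}}\perp\lambda_{\Gamma}$ and $\rho^{0}_{\Gamma\actson(X,\mu)}\big|_{\mathcal{H}_{c}}\ll\lambda_{\Gamma}$. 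The entire goal is then to show $\mathcal{H}_{s}=0$, since in that case $\rho^{0}_{\Gamma\actson(X,\mu)}=\rho^{0}_{\Gamma\actson(X,\mu)}\big|_{\mathcal{H}_{c}}\ll\lambda_{\Gamma}$, which is exactly the assertion of the corollary (separability of $L^{2}(X,\mu)$ lets one take the index set $E$ countable, recovering embeddability into $\lambda_{\Gamma}^{\oplus\infty}$).

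Next I would argue by contradiction: suppose $\mathcal{H}_{s}\neq 0$. Let $\mathcal{M}_{s}$ be the smallest complete, $\Gamma$-invariant sub-$\sigma$-algebra of $\mathcal{M}$ making every function in $\mathcal{H}_{s}$ measurable, and let $\Gamma\actson(Y,\nu)$ be the associated factor of $\Gamma\actson(X,\mu)$. By construction $\mathcal{M}_{s}$ is generated (up to null sets) by $\{gf^{-1}(A):g\in\Gamma,\ f\in\mathcal{H}_{s},\ A\subseteq\CC\text{ Borel}\}$, and every element of $\mathcal{H}_{s}$ is $\mathcal{M}_{s}$-measurable, so $\mathcal{H}_{s}$ sits as a closed, $\Gamma$-invariant subspace of $L^{2}(Y,\nu)$, where we identify $L^{2}(Y,\nu)$ with the $\Gamma$-invariant subspace of $L^{2}(X,\mu)$ of $\mathcal{M}_{s}$-measurable functions. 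Because this identification is $\Gamma$-equivariant and isometric, $\rho_{\Gamma\actson(Y,\nu)}\big|_{\mathcal{H}_{s}}$ coincides with $\rho^{0}_{\Gamma\actson(X,\mu)}\big|_{\mathcal{H}_{s}}$ and is therefore singular with respect to $\lambda_{\Gamma}$. Applying Theorem~\ref{T:SingularLebesgue} to $\Gamma\actson(Y,\nu)$ with the generating subspace $\mathcal{H}_{s}$ yields $h_{\Sigma,\nu}(Y,\Gamma)\leq 0$.

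Finally I would observe that $(Y,\nu)$ is not a one-atom space: any nonzero $f\in\mathcal{H}_{s}$ satisfies $\int f\,d\mu=0$ since $\mathcal{H}_{s}\subseteq L^{2}(X,\mu)\ominus\CC 1$, yet $f\neq 0$, so $f$ is a non-constant $\mathcal{M}_{s}$-measurable function and $\mathcal{M}_{s}$ is nontrivial. Completely positive entropy then forces $h_{\Sigma,\nu}(Y,\Gamma)>0$, contradicting the previous step. Hence $\mathcal{H}_{s}=0$ and the corollary follows.

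The only genuinely delicate point, and the step I would be most careful about, is the passage from the abstract singular subspace $\mathcal{H}_{s}$ to a bona fide dynamical factor satisfying the hypotheses of Theorem~\ref{T:SingularLebesgue}: one must check that $\mathcal{H}_{s}$ is $\Gamma$-invariant (supplied by Proposition~\ref{P:LebesgueDecomposition}), that it generates the factor's $\sigma$-algebra (true by the very definition of $\mathcal{M}_{s}$), and that singularity with respect to $\lambda_{\Gamma}$ is inherited by the restriction living on $L^{2}(Y,\nu)$ (immediate from the equivariant isometric embedding $L^{2}(Y,\nu)\hookrightarrow L^{2}(X,\mu)$). Everything else is a formal consequence of Theorem~\ref{T:SingularLebesgue} and the definition of completely positive entropy.
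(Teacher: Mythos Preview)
Your argument is correct and follows essentially the same strategy as the paper: apply the Lebesgue decomposition (Proposition~\ref{P:LebesgueDecomposition}), use Theorem~\ref{T:SingularLebesgue} to show the singular piece generates a factor of nonpositive entropy, and invoke completely positive entropy to kill that factor. The only cosmetic difference is that the paper decomposes all of $L^{2}(X,\mu)$ (so the constants land in the singular part $\mathcal{H}_{1}$), then picks a \emph{single} $f\in\mathcal{H}_{1}$, builds the cyclic factor $\Gamma\actson(\CC^{\Gamma},\nu)$ via $\Phi(x)(g)=f(g^{-1}x)$, and concludes that every such $f$ must be constant, whence $\mathcal{H}_{1}=\CC 1$; you instead decompose $L^{2}(X,\mu)\ominus\CC 1$ and feed the entire singular subspace $\mathcal{H}_{s}$ into Theorem~\ref{T:SingularLebesgue} at once. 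Both routes are equally valid and equally short; the paper's choice of a single generator just makes slightly more explicit why Theorem~\ref{T:SingularLebesgue} only needs a generating subspace rather than all of $L^{2}$.
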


\begin{proof} By Proposition \ref{P:LebesgueDecomposition}, we can write
\[L^{2}(X,\mu)=\mathcal{H}_{1}\oplus \mathcal{H}_{2}\]
where $\rho_{\Gamma\actson (X,\mu)}\big|_{\mathcal{H}_{1}}\perp\lambda_{\Gamma}$ and
\[\rho_{\Gamma\actson (X,\mu)}\big|_{\mathcal{H}_{2}}\ll \lambda_{\Gamma}.\]
Suppose that $f\in \mathcal{H}_{1}.$ Define
\[\Phi\colon X\to \CC^{\Gamma}\]
by
\[\Phi(x)(g)=f(g^{-1}x)\]
and let $\nu=\Phi_{*}\mu,$ and let $\Gamma\actson \CC^{\Gamma}$ be the Bernoulli action. Then $\Gamma\actson (\CC^{\Gamma},\nu)$ is a factor of $\Gamma\actson (X,\mu).$ Set
\[\mathcal{K}=\overline{\Span\{g(f\circ \Phi):g\in \Gamma\}}\]
then
\[\{g\xi^{-1}(A):\xi\in\mathcal{K}, \mbox{Borel} A\subseteq \CC, g\in\Gamma\}\]
generates the Borel subsets of $\CC^{\Gamma}$ up to $\nu$-measure zero. Tautologically,
\[\mathcal{K}\cong \overline{\Span\{gf:g\in\Gamma\}}\perp\lambda_{\Gamma}.\]
Hence by Theorem \ref{T:SingularLebesgue} we know that
\[h_{\Sigma,\nu}(\CC^{\Gamma},\Gamma)\leq 0.\]
Since $\Gamma\actson (X,\mu)$ has completely positive entropy this implies that $(\CC^{\Gamma},\Gamma)$ is a one-atom space. But this is only possible if $f$ is constant. Thus $\mathcal{H}_{1}=\CC1$ and
\[\rho^{0}_{\Gamma\actson (X,\mu)}=\rho\big|_{\mathcal{H}_{2}}\ll \lambda_{\Gamma}.\]

\end{proof}

Again the above corollary illustrates the utility in not assuming that $\mathcal{H}=L^{2}(X,\mu)$ in Theorem \ref{T:SingularLebesgue} but instead just assuming that $\mathcal{H}$ generates $(X,\mu).$ If $(\CC^{\Gamma},\nu)$ is as in the above proof we do not a priori know that
\[\Gamma\actson L^{2}(\CC^{\Gamma},\nu)\ominus \CC1 \perp\lambda_{\Gamma}.\]

\end{document}